\documentclass[11pt,twoside]{amsart}
\usepackage{graphicx}
\usepackage{color}
\usepackage{graphicx}
\usepackage{amssymb}
\usepackage{amsmath}
\usepackage{amsthm}
\usepackage{amsfonts}
\usepackage{amsmath, amsfonts, amssymb}
\usepackage{color}
\usepackage{multirow}
\usepackage{rotating}
\linespread {1.12}
\addtolength{\topmargin}{-1.5cm}
\textwidth 17cm
\textheight 23cm
\addtolength{\hoffset}{-0.3cm}
\oddsidemargin 0cm
\evensidemargin 0cm
\setcounter{page}{1}

%------------------------------------------------------------------------------------%
\newtheorem{thm}{Theorem}[section]
\newtheorem{cor}[thm]{Corollary}

\newtheorem{rem}[thm]{\bf{Remark}}

\numberwithin{equation}{section}

\definecolor{Gray}{gray}{0.85}
%------------------------------------------------------------------------------------%

\begin{document}
	%\hskip -0.2 cm
		%------------------------------------------------------------------------------------%
		\small \begin{center}{\textit{ In the name of
					Allah, the Beneficent, the Merciful}}\end{center}
		\vspace{0.5cm}
	\title{On identities of $2$-dimensional algebras}

	\author{H. Ahmed$^1$, U.Bekbaev$^2$, I.Rakhimov$^3$}
	
	\thanks{{\scriptsize
			emails: $^1$houida\_m7@yahoo.com; $^2$uralbekbaev@gmail.com; $^3$isamiddin@uitm.edu.my}}
	\maketitle
	\begin{center}
		{\scriptsize \address{$^1$Depart. of Math., Faculty of Science, Taiz University, Taiz, Yemen}} \\
{\scriptsize \address{$^2$Depart. of Mathematical and Natural Sciences, TTPU, Tashkent, Uzbekistan}}\\
{\scriptsize \address{$^3$Depart. of Math., Faculty of Computer and Mathematical Sciences, UiTM, Shah Alam, Malaysia \\ $\&$ V.I.Romanovski Institute of Mathematics, Uzbekistan Academy of Sciences}}
	\end{center}

%
%	\author{H. Ahmed
%		\\
%		{\scriptsize Depart. of Math., Faculty of Science, Taiz University, Taiz, Yemen}
%		\\
%		U.Bekbaev
%		\\
%		{\scriptsize TTPU, Depart. of Mathematical and Natural Sciences, Tashkent, Uzbekistan}
%		\\
%		I.Rakhimov
%		\\
%		{\scriptsize Institute for Mathematical Research (INSPEM), UPM, Selangor, Malaysia }}
%	
%	\thanks{{\scriptsize
%			\newline emails: houida\_m7@yahoo.com; uralbekbaev@gmail.com; rakhimov@upm.edu.my\\
%			\hskip -0.4 true cm MSC(2010): Primary: 15A72; Secondary: 17A60, 15A50.
%			\newline Keywords: algebra, polynomial identity, structure constants.\\
%	}}
%	\maketitle

	\begin{abstract} In the paper we provide some polynomial identities for finite-dimensional algebras. A list of well known single polynomial identities is exposed and the classification of all $2$-dimensional algebras with respect to these identities is given.
	\end{abstract}	
	\vskip 0.2cm
	
	\section{Introduction}
	
	It is known that many important algebras in use are so called PI-algebras, that is, algebras satisfying a certain set of polynomial identities. Therefore, the classification of such algebras, up to isomorphism, is of a great interest. Earlier we have given classification results for some important classes of two-dimensional PI-algebras \cite{B1,B3,B4,B5}. In this paper we consider a list of some important polynomial identities which have appeared earlier in the theory of algebras and  present a classification of two-dimensional algebras with respect to these identities. For other results related to the classification problem and the problems raised in this paper we refer the reader to \cite{D2003,G2011,P2000,Casado2,C,IK}.
	
	The organization of the paper is as follows. In the next section we introduce definitions, notations and results needed in the course of the study followed by two section where we present main results of the paper. In Section 3 we provide some polynomial identities for finite-dimensional algebras. The last section is devoted to the classification of two-dimensional algebras with respect to the identities specified.
	\vskip 0.4 true cm

	\section{Preliminaries}
 In this paper an algebra $(\mathbb{A},\cdot)$ means a vector space $\mathbb{A}$ over a field $\mathbb{F}$ with a given
	bilinear map \[\cdot: \mathbb{A}\times\mathbb{A}\rightarrow\mathbb{A},\ (\mathbf{u},\mathbf{v})\mapsto \mathbf{u}\cdot \mathbf{v}\] and often we drop $\cdot$ in the writing.
	
	If $\mathbb{A}$ such a two-dimensional algebra and $\mathrm{e}=(e_1,e_2)$ is a fixed basis by\\
	$A=\left(
	\begin{array}{cccc}
	\alpha_1 & \alpha_2 &\alpha_3 & \alpha_4 \\
	\beta_1 & \beta _2 & \beta_3 & \beta_4
	\end{array}\right)$ we denote its matrix of structural constants (MSC) with respect to this basis, i.e., \[e_1e_1=\alpha_1e_1+\beta_1e_2,\ e_1e_2=\alpha_2e_1+\beta_2e_2,\ e_2e_1=\alpha_3e_1+\beta_3e_2,\ e_2e_2=\alpha_4e_1+\beta_4e_2.\] Further it is assumed that the basis $\mathrm{e}$ is fixed and we don't make difference between an algebra $\mathbb{A}$ and its MSC A with respect to this basis.
	
	The classification problem of all two dimensional algebras over any field $\mathbb{F}$, over that any the second and third degree polynomial has a root, has been solved in \cite{B2}. The classification there was done via providing the canonical MSCs for such algebras. In this paper we rely on the result of \cite{B2}, follow its notations and for a convenience we present here the corresponding canonical representatives according to $\mathrm{Char}(\mathbb{F})\neq 2,3$, $\mathrm{Char}(\mathbb{F})= 2$ and $\mathrm{Char}(\mathbb{F})= 3$ cases. Note that the parameters given in the canonical representatives may take any values in $\mathbb{F}$.
\begin{center}	
\begin{tabular}{|c|c|c|c|c|c|c|c|c|c|}
  \hline
  \multirow{14}{*}{\begin{turn}{90}$\mathrm{Char}\left(\mathbb{F}\right)\neq 2,3$ \end{turn}} &\multirow{2}{*}{Algebra}&\multicolumn{8}{c|}{Structure constants}\\
  \cline{3-10}
&&$\alpha_1$&$\alpha_2$&$\alpha_3$&$\alpha_4$&$\beta_1$&$\beta_2$&$\beta_3$&$\beta_4$\\
  \cline{2-10}
  % after \\: \hline or \cline{col1-col2} \cline{col3-col4} ...
  &$A_{1}(\mathbf{c}) $& $\alpha_1$ & $\alpha_2$ &$\alpha_2+1$ & $\alpha_4$ & $\beta_1$ &$-\alpha_1$ &$ -\alpha_1+1$ &$ -\alpha_2$\\
  \cline{2-10}
  &$A_{2}(\mathbf{c})$ &$ \alpha_1$ & $0$ & $0$ & $1$& $\beta _1$& $\beta _2$& $1-\alpha_1$&0 \\
  \cline{2-10}
  &$A_{3}(\mathbf{c})$ & $0$ & $1$ & $1$ & $0$ & $\beta _1$& $\beta _2$ & 1&$-1$ \\
  \cline{2-10}
 &$ A_{4}(\mathbf{c}) $& $\alpha _1$ & $0 $&$ 0$ & $0$ & $0$ & $\beta _2$& $1-\alpha _1$&$0$ \\
  \cline{2-10}
  &$A_{5}(\mathbf{c})$ & $\alpha_1$& $0$ & $0$ & $0$ &$1$ & $2\alpha_1-1$ & $1-\alpha_1$&$0$ \\
  \cline{2-10}
  &$A_{6}(\mathbf{c})$ & $\alpha_1$ & $0$ & $0$ &$ 1$&$\beta _1$& $1-\alpha_1$ & $-\alpha_1$&$0$\\
  \cline{2-10}
  &$ A_{7}(\mathbf{c}) $&$ 0$ & $1$ & $1$ & $0$&	$\beta_1$& $1$& $0$&$-1$ \\
  \cline{2-10}
   &$A_{8}(\mathbf{c})$ & $\alpha_1 $& $0$ &$ 0$ & $0$& $0$ & $1-\alpha_1$ & $-\alpha_1$&$0$ \\
  \cline{2-10}
  &$A_{9}$ & $\frac{1}{3}$& $0$ & $0$ & $0$ &$1$ & $\frac{2}{3}$ & $-\frac{1}{3}$&$0$ \\
  \cline{2-10}
&$A_{10}$ &$0$ &$1$ &$1$ & $0$&	$0$ &$0$& $0$ &$-1 $\\
  \cline{2-10}
 & $A_{11} $& $0$ &$1$ &$1$ & $0$&$1$ &$0$& $0$ &$-1 $\\
  \cline{2-10}
 &$ A_{12}$ & $0$ & $0$ & $0$ & $0$&$1$ &$0$&$0$ &$0$ \\
  \hline
\end{tabular}
\end{center}
$A_{2}(\mathbf{c})=\left(
	\begin{array}{cccc}
	\alpha_1 & 0 & 0 & 1 \\
	\beta _1& \beta _2& 1-\alpha_1&0
	\end{array}\right)\cong \left(
	\begin{array}{cccc}
	\alpha_1 & 0 & 0 & 1 \\
	-\beta _1& \beta _2& 1-\alpha_1&0
	\end{array}\right)$, {where $\mathbf{c}=(\alpha_1, \beta_1, \beta_2)\in \mathbb{F}^3,$ \\ $A_{6}(\mathbf{c})=\left(
	\begin{array}{cccc}
	\alpha_1 & 0 & 0 & 1 \\
	\beta _1& 1-\alpha_1 & -\alpha_1&0
	\end{array}\right)\cong \left(
	\begin{array}{cccc}
	\alpha_1 & 0 & 0 & 1 \\
	-\beta _1& 1-\alpha_1 & -\alpha_1&0
	\end{array}\right),$ where $\mathbf{c}=(\alpha_1, \beta_1)\in \mathbb{F}^2,$

\begin{center}
\begin{tabular}{|c|c|c|c|c|c|c|c|c|c|}
  \hline
\multirow{14}{*}{\begin{turn}{90}$\mathrm{Char}\left(\mathbb{F}\right)=2$ \end{turn}} &\multirow{2}{*}{Algebra}&\multicolumn{8}{c|}{The structure constants}\\
  \cline{3-10}
&&$\alpha_1$&$\alpha_2$&$\alpha_3$&$\alpha_4$&$\beta_1$&$\beta_2$&$\beta_3$&$\beta_4$\\
  \cline{2-10}
  % after \\: \hline or \cline{col1-col2} \cline{col3-col4} ...
  &$A_{1,2}(\mathbf{c}) $& $\alpha_1$ & $\alpha_2$ &$1+\alpha_2$ & $\alpha_4$ & $\beta_1$ &$\alpha_1$ &$ 1+\alpha_1$ &$ \alpha_2$\\
  \cline{2-10}
  &$A_{2,2}(\mathbf{c})$ &$\alpha_1 $& $0$ & $0$ &$1$ &$\beta _1$& $\beta_2 $& $1+\alpha_1$&$0$ \\
  \cline{2-10}
  &$A_{3,2}(\mathbf{c})$ & $\alpha_1 $&$1$ &$1$ & $0$&$0$& $\beta_2$ & $1+\alpha_1$&$1$ \\
  \cline{2-10}
 &$ A_{4,2}(\mathbf{c}) $& $\alpha _1 $& $0$ & $0$ & $0$&$0$ & $\beta_2$ & $1+\alpha _1$&$0$ \\
  \cline{2-10}
  &$A_{5,2}(\mathbf{c})$ & $\alpha_1 $& $0$ & $0$ & $0$ &$1$ &$1$ & $1+\alpha_1$&$0$ \\
  \cline{2-10}
  &$A_{6,2}(\mathbf{c})$ & $\alpha_1$ & $0$ & $0$ &$1$&$ \beta _1$& $1+\alpha_1 $&$ \alpha_1$&$0$\\
  \cline{2-10}
  &$ A_{7,2}(\mathbf{c}) $&$\alpha_1$ &$1$ &$1$ & $0$ &$0$& $1+\alpha_1$& $\alpha_1$&$1$ \\
  \cline{2-10}
   &$A_{8,2}(\mathbf{c})$ &$ \alpha_1 $& $0$ & $0$ & $0$&$0$ & $1+\alpha_1$ &$ \alpha_1$&$0$ \\
  \cline{2-10}
  &$A_{9,2}$ &$1$ & $0$ & $0$ & $0$&$1$ & $0$ &$1$&$0$\\
  \cline{2-10}
&$A_{10,2}$ &$0$ &$1$ &$1$ & $0$&$0$ &$0$& $0$ &$1$ \\
  \cline{2-10}
 & $A_{11,2} $&$1$ &$1$ &$1$ & $0$&$0$ &$1$&$1$ &$1$\\
  \cline{2-10}
 &$ A_{12,2}$ & $0$ & $0$ & $0$ & $0$&$1$ &$0$&$0$ &$0$ \\
  \hline
\end{tabular}
\end{center}
\begin{center}
\begin{tabular}{|c|c|c|c|c|c|c|c|c|c|}
  \hline
\multirow{14}{*}{\begin{turn}{90}$\mathrm{Char}\left(\mathbb{F}\right)=3$ \end{turn}} &\multirow{2}{*}{Algebra}&\multicolumn{8}{c|}{The structure constants}\\
  \cline{3-10}
&&$\alpha_1$&$\alpha_2$&$\alpha_3$&$\alpha_4$&$\beta_1$&$\beta_2$&$\beta_3$&$\beta_4$\\
  \cline{2-10}
  % after \\: \hline or \cline{col1-col2} \cline{col3-col4} ...
  &$A_{1,3}(\mathbf{c}) $& $\alpha_1$ & $\alpha_2$ &$\alpha_2+1$ & $\alpha_4$ & $\beta_1$ &$-\alpha_1$ &$ -\alpha_1+1$ &$ -\alpha_2$\\
  \cline{2-10}
  &$A_{2,3}(\mathbf{c})$ &$ \alpha_1$ & $0$ & $0$ &$1$& $\beta _1$& $\beta _2$& $1-\alpha_1$&$0$ \\
  \cline{2-10}
  &$A_{3,3}(\mathbf{c})$ & $0$ &$1$ &$1$ & $0$ & $\beta _1$& $\beta _2$ &$1$&$-1$ \\
  \cline{2-10}
 &$ A_{4,3}(\mathbf{c}) $& $\alpha _1$ & $0$ & $0$ & $0$ & $0$ & $\beta _2$& $1-\alpha _1$&$0$ \\
  \cline{2-10}
  &$A_{5,3}(\mathbf{c})$ & $\alpha_1$& $0$ & $0$ & $0$ &$1$ & $-\alpha_1-1$ & $1-\alpha_1$&$0$ \\
  \cline{2-10}
   &$A_{6,3}(\mathbf{c})$ & $\alpha_1$ & $0$ & $0$ &$1$&$\beta _1$& $1-\alpha_1$ & $-\alpha_1$&$0$\\
  \cline{2-10}
  &$ A_{7,3}(\mathbf{c}) $& $0$ &$1$ &$1$ & $0$&	$\beta_1$&$1$& $0$&$-1$ \\
  \cline{2-10}
   &$A_{8,3}(\mathbf{c})$ & $\alpha_1 $& $0$ & $0$ & $0$& $0$ & $1-\alpha_1$ & $-\alpha_1$&$0$ \\
  \cline{2-10}
  &$A_{9,3}$ & $0$ &$1$&$1$& $0$&$1$ &$0$&$0$ &$-1$\\
  \cline{2-10}
&$A_{10,3}$ &$0$ &$1$ &$1$ & $0$&$0$ &$0$&$0$ &$-1$ \\
  \cline{2-10}
 & $A_{11,3} $&$1$ & $0$ & $0$ & $0$&$1$ &$-1$&$-1$ &$0$\\
  \cline{2-10}
 &$ A_{12,3}$ & $0$ & $0$ & $0$ & $0$&$1$ &$0$&$0$ &$0$ \\
  \hline
\end{tabular}
\end{center}
%\end{document}
$A_{2,3}(\mathbf{c})=\left(
\begin{array}{cccc}
\alpha_1 & 0 & 0 & 1 \\
\beta _1& \beta _2& 1-\alpha_1&0
\end{array}\right)\cong\left(
\begin{array}{cccc}
\alpha_1 & 0 & 0 & 1 \\
-\beta _1& \beta _2& 1-\alpha_1&0
\end{array}\right),\ \mbox{where}\ \mathbf{c}=(\alpha_1, \beta_1, \beta_2)\in \mathbb{F}^3,$ \\ $A_{6,3}(\mathbf{c})=\left(
\begin{array}{cccc}
\alpha_1 & 0 & 0 & 1 \\
\beta _1& 1-\alpha_1 & -\alpha_1&0
\end{array}\right)\cong \left(
\begin{array}{cccc}
\alpha_1 & 0 & 0 & 1 \\
-\beta _1& 1-\alpha_1 & -\alpha_1&0
\end{array}\right),\ \mbox{where}\ \mathbf{c}=(\alpha_1, \beta_1)\in \mathbb{F}^2.$
		\section{Polynomial identities of two-dimensional algebras} Let $\mathbb{F}$ be a field, $m\leq n\leq l$ be  fixed natural numbers, $S_n$ be the symmetric group and $x^1,x^2,...,x^l$ be non-commutative, non-associative variables. By the use of these variables and parenthesis $(,)$ one can construct different non-associative monomials(words) containing each of the variables  $x^1, x^2,..., x^l$ only once and they may occur in the monomial in any order. For example, in the case of $l=2$ one has only two such monomials $x^1x^2$ and $x^2x^1$, whereas in the case of $l=3$ there are the following twelve possibilities  \[\{(x^{\sigma(1)}x^{\sigma(2)})x^{\sigma(3)}: \sigma\in S_3\}\cup \{x^{\sigma(1)}(x^{\sigma(2)}x^{\sigma(3)}): \sigma\in S_3\}.\]  Further	
	 $w(x^1,x^2,...,x^l)$ stands for such a monomial and we associate with $w(x^1,x^2,...,x^l)$  the following multi-linear polynomial \[w_{n,d}(x^1,x^2,...,x^l)= \sum_{\sigma\in S_n}sgn(\sigma)w(x^{\sigma(1)},x^{\sigma(2)},...,x^{\sigma(n)},x^{n+1}, x^{n+2},..., x^{l}).\]
	 	
	Let $\mathbb{A}$ be any $m$-dimensional algebra over $\mathbb{F}$. We use the notations $[\mathbf{u},\mathbf{v}]=\mathbf{u}\cdot \mathbf{v}-\mathbf{v}\cdot \mathbf{u}$ and  $[\mathbf{u},\mathbf{v},\mathbf{w}]=(\mathbf{u}\cdot \mathbf{v})\cdot \mathbf{w}-\mathbf{u}\cdot (\mathbf{v}\cdot \mathbf{w})$ for the commutator and the associator of $\mathbf{u},\mathbf{v}$ and $\mathbf{u},\mathbf{v},\mathbf{w}$, respectively. It is assumed that a basis  $\mathrm{e}=\{e_j\}_{j=1,2,3,...,m}$ of $\mathbb{A}$ over $\mathbb{F}$ is fixed and $\mathbf{u}^i=\sum\limits_{j=1}^{m}x^i_je_j\in \mathbb{A}$, where $i=1,2,,3,...,l$, stand for any elements of $\mathbb{A}$. Define \[|\mathbf{u}^1,\mathbf{u}^2,\dots ,\mathbf{u}^m|=\left|\begin{array}{cccc}x^1_1&x^1_2& \dots &x^1_m\\ x^2_1&x^2_2& \dots &x^2_m\\
	\vdots&\vdots& \dots&\vdots\\
	x^m_1&x^m_2& \dots &x^m_m \end{array}\right|.\]
	
\begin{thm}\emph{}

 \begin{itemize}
 \item If $m=n=l$ then there exists such an element $\mathbf{u}_0\in \mathbb{A}$ that for any $\mathbf{u}^i=\sum\limits_{j=1}^{m}x^i_je_j\in \mathbb{A}$, where $i=1,2,,3,...,m$, the equality
	\begin{equation} w_{m,d}(\mathbf{u}^1,\mathbf{u}^2,...,\mathbf{u}^m)=|\mathbf{u}^1,\mathbf{u}^2,...,\mathbf{u}^m|\mathbf{u}_0 \end{equation} holds true;
 \item if $n>m$ then $w_d(\mathbf{u}^1,\mathbf{u}^2,...,\mathbf{u}^l)=0. $
 \end{itemize}
 \end{thm}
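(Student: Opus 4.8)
The plan is to exploit multilinearity and the antisymmetrization built into $w_{m,d}$ to reduce everything to the action of $w$ on basis vectors. Write each $\mathbf{u}^i=\sum_{j=1}^m x^i_j e_j$. Since $w(x^1,\dots,x^m)$ is a (non-associative) monomial that is linear in each argument, expanding $w_{m,d}(\mathbf{u}^1,\dots,\mathbf{u}^m)=\sum_{\sigma\in S_m}\mathrm{sgn}(\sigma)\,w(\mathbf{u}^{\sigma(1)},\dots,\mathbf{u}^{\sigma(m)})$ multilinearly yields a sum over all functions $\phi\colon\{1,\dots,m\}\to\{1,\dots,m\}$ of terms $\bigl(\prod_i x^{\sigma(i)}_{\phi(i)}\bigr)\,w(e_{\phi(1)},\dots,e_{\phi(m)})$. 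The first key step is to observe that in the presence of the sign-twisted sum over $S_m$, only the terms with $\phi$ a bijection survive: if $\phi$ is not injective, say $\phi(a)=\phi(b)$ for $a\neq b$, then pairing $\sigma$ with $\sigma\circ(a\,b)$ shows the corresponding contributions cancel, because $w(e_{\phi(1)},\dots,e_{\phi(m)})$ is unchanged (the repeated basis vector sits in the same two slots) while the sign flips and the coefficient $\prod_i x^{\sigma(i)}_{\phi(i)}$ is also unchanged. Hence
\[
w_{m,d}(\mathbf{u}^1,\dots,\mathbf{u}^m)=\sum_{\sigma,\tau\in S_m}\mathrm{sgn}(\sigma)\Bigl(\prod_{i=1}^m x^{\sigma(i)}_{\tau(i)}\Bigr)\,w(e_{\tau(1)},\dots,e_{\tau(m)}),
\]
where $\tau$ now ranges over bijections.

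The second step is a bookkeeping rearrangement: fix $\tau$, and in the inner sum over $\sigma$ substitute $\sigma=\rho\circ\tau$. Then $\mathrm{sgn}(\sigma)=\mathrm{sgn}(\rho)\mathrm{sgn}(\tau)$ and $\prod_i x^{\sigma(i)}_{\tau(i)}=\prod_i x^{\rho(\tau(i))}_{\tau(i)}=\prod_j x^{\rho(j)}_j$, so the coefficient attached to $w(e_{\tau(1)},\dots,e_{\tau(m)})$ becomes $\mathrm{sgn}(\tau)\sum_{\rho\in S_m}\mathrm{sgn}(\rho)\prod_j x^{\rho(j)}_j=\mathrm{sgn}(\tau)\,|\mathbf{u}^1,\dots,\mathbf{u}^m|$ by the Leibniz formula for the determinant. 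Therefore
\[
w_{m,d}(\mathbf{u}^1,\dots,\mathbf{u}^m)=|\mathbf{u}^1,\dots,\mathbf{u}^m|\sum_{\tau\in S_m}\mathrm{sgn}(\tau)\,w(e_{\tau(1)},\dots,e_{\tau(m)}),
\]
and the element $\mathbf{u}_0:=\sum_{\tau\in S_m}\mathrm{sgn}(\tau)\,w(e_{\tau(1)},\dots,e_{\tau(m)})=w_{m,d}(e_1,\dots,e_m)\in\mathbb{A}$ depends only on $\mathbb{A}$, $\mathrm{e}$ and the shape of $w$, not on the $\mathbf{u}^i$. This proves the first bullet.

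For the second bullet, when $n>m$ the polynomial $w_{n,d}$ antisymmetrizes only the first $n$ arguments while $x^{n+1},\dots,x^l$ are spectators; so it suffices to treat those spectators as fixed and show $\sum_{\sigma\in S_n}\mathrm{sgn}(\sigma)\,w(\mathbf{u}^{\sigma(1)},\dots,\mathbf{u}^{\sigma(n)},\dots)=0$. Running the same multilinear expansion, each term carries a factor $w(e_{\phi(1)},\dots,e_{\phi(n)},\dots)$ indexed by a function $\phi\colon\{1,\dots,n\}\to\{1,\dots,m\}$; since $n>m$, no such $\phi$ is injective, so by the identical transposition-pairing argument every term cancels. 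Hence $w_{n,d}=0$ identically on $\mathbb{A}$. I expect the only real subtlety — the "main obstacle" such as it is — to be making the transposition-pairing cancellation fully rigorous: one must check that swapping the two equal-valued slots of $\phi$ genuinely leaves $w(e_{\phi(1)},\dots)$ fixed as an element of $\mathbb{A}$ (it does, since we are permuting two identical vectors in a fixed monomial shape) and that the scalar coefficient is also invariant under replacing $\sigma$ by $\sigma\circ(a\,b)$; both are immediate but should be stated carefully, and in characteristic $2$ one notes the pairing still works since distinct transpositions give genuinely distinct $\sigma$'s, so the involution on $S_n$ is fixed-point-free on the relevant index set.
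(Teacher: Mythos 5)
Your argument is correct and is essentially the paper's own proof: both expand $w$ multilinearly onto basis monomials, recognize the antisymmetrized scalar coefficient as a determinant (which you establish by the explicit transposition-pairing cancellation rather than by citing that a determinant with repeated columns vanishes), conclude vanishing for $n>m$ because every index map $\{1,\dots,n\}\to\{1,\dots,m\}$ must repeat a value, and factor out $|\mathbf{u}^1,\dots,\mathbf{u}^m|$ when $m=n=l$. Your closed form $\mathbf{u}_0=w_{m,d}(e_1,\dots,e_m)=\sum_{\tau\in S_m}\mathrm{sgn}(\tau)\,w(e_{\tau(1)},\dots,e_{\tau(m)})$ matches the paper's $\mathbf{u}_0$ up to the sign factors $\mathrm{sgn}(\tau)$, which the paper's displayed formula omits but which your derivation correctly retains.
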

	
	\begin{proof} It is clear that $$w(\mathbf{u}^1,\mathbf{u}^2,...,\mathbf{u}^l)=\sum\limits_{\begin{array}{l}{i_j=1,_{j=1,...,l,}}\\ s=1\end{array}}^m x^1_{i_1}x^2_{i_2}...x^l_{i_l}c_s^{(i_1,i_2,...,i_l)}e_s,$$ where $c_s^{(i_1,i_2,...,i_l)}\in \mathbb{F}$ and therefore,
$$\begin{array}{ll}
 w_{n,d}(\mathbf{u}^1,\mathbf{u}^2,...,\mathbf{u}^l)&= \sum\limits_{\begin{array}{l}{i_j=1,_{j=1,...,l,}}\\ s=1\end{array}}^m\sum\limits_{\sigma\in S_n}sgn(\sigma)x^{\sigma(1)}_{i_1}x^{\sigma(2)}_{i_2}...x^{\sigma(n)}_{i_n}x^{n+1}_{i_{n+1}}x^{n+2}_{i_{n+2}}...x^l_{i_l}c_s^{(i_1,i_2,...,i_l)}e_s\\ &= \sum\limits_{\begin{array}{l}{i_j=1,_{j=1,...,l,}}\\ s=1\end{array}}^m\left|\begin{array}{cccc}x^1_{i_1}&x^1_{i_2}&...&x^1_{i_n}\\ x^2_{i_1}&x^2_{i_2}&...&x^2_{i_n}\\
	\vdots&\vdots&...&\vdots\\
	x^n_{i_1}&x^n_{i_2}&...&x^n_{i_n} \end{array}\right|x^{n+1}_{i_{n+1}}x^{n+2}_{i_{n+2}}...x^l_{i_l}c_s^{(i_1,i_2,...,i_l)}e_s.
\end{array}$$

	The determinant in the expression above is zero whenever either two of the numbers $i_1,i_2,...,i_n$ are the same and hence, in this case we have $w_{n,d}(\mathbf{u}^1,\mathbf{u}^2,...,\mathbf{u}^l)=0$ whenever $n> m$, whereas if $m=n=l$ one has
	$$w_{m,d}(\mathbf{u}^1,\mathbf{u}^2,...,\mathbf{u}^m)=|\mathbf{u}^1,\mathbf{u}^2,...,\mathbf{u}^m|\mathbf{u}_0,\ \mbox{where}\
	\mathbf{u}_0=\sum_{s=1}^m\sum_{\sigma\in S_m}c_s^{(\sigma(1),\sigma(2),...,\sigma(m))}e_s.$$
	\end{proof}	
	\begin{cor} For any $m$-dimensional algebra $\mathbb{A}$ the following identity
	\[[w_{m,d}(\mathbf{u}^1,\mathbf{u}^2,...,\mathbf{u}^m),w_{m,d}(\mathbf{v}^1,\mathbf{v}^2,...,\mathbf{v}^m)]=0\]
	holds true.\end{cor}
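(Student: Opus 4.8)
The plan is to apply the first bullet of the preceding theorem twice — once to each tuple of arguments — and then observe that the commutator of two scalar multiples of the same fixed vector vanishes. Concretely, by the theorem there exists a single element $\mathbf{u}_0\in\mathbb{A}$ (depending only on the word $w$ and the algebra, not on the arguments) such that $w_{m,d}(\mathbf{u}^1,\dots,\mathbf{u}^m)=|\mathbf{u}^1,\dots,\mathbf{u}^m|\,\mathbf{u}_0$ and likewise $w_{m,d}(\mathbf{v}^1,\dots,\mathbf{v}^m)=|\mathbf{v}^1,\dots,\mathbf{v}^m|\,\mathbf{u}_0$ for all choices of the $\mathbf{u}^i$ and $\mathbf{v}^j$ in $\mathbb{A}$.

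First I would write $\lambda=|\mathbf{u}^1,\dots,\mathbf{u}^m|\in\mathbb{F}$ and $\mu=|\mathbf{v}^1,\dots,\mathbf{v}^m|\in\mathbb{F}$, so the left-hand side of the asserted identity becomes $[\lambda\mathbf{u}_0,\mu\mathbf{u}_0]$. Then I would use bilinearity of the product (hence of the commutator $[\,\cdot\,,\,\cdot\,]$) in each slot to pull out the scalars: $[\lambda\mathbf{u}_0,\mu\mathbf{u}_0]=\lambda\mu\,[\mathbf{u}_0,\mathbf{u}_0]$. Finally, $[\mathbf{u}_0,\mathbf{u}_0]=\mathbf{u}_0\cdot\mathbf{u}_0-\mathbf{u}_0\cdot\mathbf{u}_0=0$ directly from the definition of the commutator, so the whole expression is $0$. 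Since the $\mathbf{u}^i$ and $\mathbf{v}^j$ were arbitrary, this is a genuine polynomial identity on $\mathbb{A}$.

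There is essentially no obstacle here: the only subtlety worth stating explicitly is that the theorem yields the \emph{same} vector $\mathbf{u}_0$ for both invocations, because $\mathbf{u}_0$ is determined by the monomial $w$ and the structure constants of $\mathbb{A}$ alone — it is independent of which elements are substituted. This is exactly why the two outputs are collinear, and collinearity is all that is needed for the commutator to vanish. I would present the argument in three short lines (substitute, factor out scalars, note $[\mathbf{u}_0,\mathbf{u}_0]=0$) without any computation.
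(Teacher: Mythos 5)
Your proof is correct and is precisely the argument the paper intends (the corollary is stated without proof as an immediate consequence of the theorem): both factors are scalar multiples of the single vector $\mathbf{u}_0$ determined by $w$ and the structure constants, so the commutator reduces to $\lambda\mu[\mathbf{u}_0,\mathbf{u}_0]=0$. Your explicit remark that the same $\mathbf{u}_0$ serves both invocations is exactly the point that makes the deduction work.
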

	
	%Note that	\[|\mathbf{v}^1,\mathbf{v}^2,...,\mathbf{v}^m|\mathbf{w}_{m,d}(\mathbf{u}^1,\mathbf{u}^2,...,\mathbf{u}^m)-|\mathbf{u}^1,\mathbf{u}^2,...,\mathbf{u}^m|\mathbf{w}_{m,d}(\mathbf{v}^1,\mathbf{v}^2,...,\mathbf{v}^m)=0\]

	Further we deal only with two-dimensional algebras. Due to the theorem above in any two-dimensional algebra $\mathbb{A}$ the following identities hold true
	 \begin{equation} \label{ID} [[\mathbf{u},\mathbf{v}],[\mathbf{u}',\mathbf{v}']]=0,\ \ [\mathbf{u},\mathbf{v}]\mathbf{w}+[\mathbf{v},\mathbf{w}]\mathbf{u}+[\mathbf{w},\mathbf{u}]\mathbf{v}=0,\  \ \mathbf{w}[\mathbf{u},\mathbf{v}]+\mathbf{u}[\mathbf{v},\mathbf{w}]+\mathbf{v}[\mathbf{w},\mathbf{u}]=0 \end{equation} for all $\mathbf{u},\mathbf{v},\mathbf{w}, \mathbf{u}',\mathbf{v}'\in \mathbb{A}$. In particular, for any such algebra $(\mathbb{A}, \cdot)$ the corresponding  $(\mathbb{A}, [\cdot,\cdot])$ is a Lie algebra. Note that any three-linear identity in a two-dimensional algebra is a linear combination of the identities (\ref{ID}).
	
	Let us now observe a few simple identities for some classes of two-dimensional algebras presented above in terms of their MSC (note that these identities also hold true for the corresponding isomorphic algebras).
	
	In the cases of $A_4$, $A_5$, $A_8(\alpha_1)$ and $A_9$ one has

 \begin{center}$\begin{array}{ll} \qquad \ \ [\mathbf{u},\mathbf{v}]&=|\mathbf{u},\mathbf{v}|(\beta_2+\alpha_1-1)e_2,\\
                     \end{array}$

 $\begin{array}{ll}
 \ \ \ [\mathbf{u},\mathbf{v}]&= (3\alpha_1-2)|\mathbf{u},\mathbf{v}|e_2,\\
\end{array}$

  $\begin{array}{ll}
 [\mathbf{u},\mathbf{v}]&= (x_1 y_2-x_2 y_1) e_2
\end{array}$
\end{center}
  $\begin{array}{ll}
\mbox{and} &
\end{array}$

$$\begin{array}{ll}
 [\mathbf{u},\mathbf{v}]&=(x_1y_2-x_2y_1)e_2,
\end{array}$$
$\begin{array}{ll}
\mbox{respectively}. &
\end{array}$

   Due to $e_2^2=0$ these imply the identities \[[\mathbf{u},\mathbf{v}][\mathbf{u}',\mathbf{v}']=0 \ \mbox{and}\ [\mathbf{u},\mathbf{v},\mathbf{w}][\mathbf{u}',\mathbf{v}',\mathbf{w}']=0.\]
	
	Moreover, for $A_9$ the identity \[2[\mathbf{u},\mathbf{v}]\mathbf{w}+\mathbf{w}[\mathbf{u},\mathbf{v}]=0 \] also holds true. Indeed,

\ \qquad$\mathbf{u}\mathbf{v}=(x_1e_1+x_2e_2)(y_1e_1+y_2e_2)=\frac{x_1y_1}{3}e_1+\frac{3x_1y_1+2x_1y_2-x_2y_1}{3}e_2,$

	\ \quad$[\mathbf{u},\mathbf{v}]=(x_1y_2-x_2y_1)e_2,$

\ \ $[\mathbf{u},\mathbf{v}]\mathbf{w}=-\frac{z_1}{3}(x_1y_2-x_2y_1)e_2,$

 \ \ $\mathbf{w}[\mathbf{u},\mathbf{v}]=\frac{2z_1}{3}(x_1y_2-x_2y_1)e_2.$

Therefore, we have $$2[\mathbf{u},\mathbf{v}]\mathbf{w}+\mathbf{w}[\mathbf{u},\mathbf{v}]=0.$$
		
	For algebras $A_{10}$,\ $A_{11}$ and $A_{12}$ one has $\mathbf{u}\mathbf{v}=\mathbf{v}\mathbf{u}$. Moreover, in the case of $A_{10}$ we have
	$$\mathbf{u}\mathbf{v}=(x_1e_1+x_2e_2)(y_1e_1+y_2e_2)=(x_1y_2+x_2y_1)e_1-x_2y_2e_2,$$ $$(\mathbf{u}\mathbf{v})\mathbf{w}=((x_1y_2+x_2y_1)z_2-x_2y_2z_1)e_1+x_2y_2z_2e_2,$$
	$$\mathbf{u}(\mathbf{v}\mathbf{w})=(-x_1y_2z_2+x_2(y_1z_2+y_2z_1))e_1+x_2y_2z_2e_2,$$
	$$[\mathbf{u},\mathbf{v},\mathbf{w}]=(\mathbf{u}\mathbf{v})\mathbf{w}-\mathbf{u}(\mathbf{v}\mathbf{w})=2y_2(x_1z_2-x_2z_1)e_1$$ and hence, the identities
	\[[\mathbf{u},\mathbf{v},\mathbf{w}][\mathbf{u}',\mathbf{v}',\mathbf{w}']=0,\ \ [\mathbf{u},\mathbf{v},\mathbf{w}]+[\mathbf{v},\mathbf{w},\mathbf{u}]-[\mathbf{w},\mathbf{u},\mathbf{v}]=0\] hold true.
	  %Moreover if $\beta_2=1-\alpha_1$ or  $\alpha_1=1$ it satisfies $[\mathbf{u},\mathbf{v}]\mathbf{w}=0$, if  $\beta_2=1-\alpha_1$ or  $\beta_2=0$ it satisfies $\mathbf{w}[\mathbf{u},\mathbf{v}]=0$. (D\mathbf{u}e to $[\mathbf{u},\mathbf{v}]\mathbf{w}=z_1(x_1y_2-x_2y_1)(\beta_2+\alpha_1-1)(1-\alpha_1)e_2$ and $\mathbf{w}[\mathbf{u},\mathbf{v}]=z_1(x_1y_2-x_2y_1)(\beta_2+\alpha_1-1)
	 %\beta_2e_2$)
	% In $A_5$ case $\mathbf{u}\mathbf{v}=x_1y_1\alpha_1e_1+(x_1y_1\beta_1+(2\alpha_1-1)x_1y_2+(1-\alpha_1)x_2y_1)e_2$, $[\mathbf{u},\mathbf{v}]=(3\alpha_1-2)|\mathbf{u},\mathbf{v}|e_2$ and therefore it satisfies the Jacobi identity and  \[[\mathbf{u},\mathbf{v}][\mathbf{u}',\mathbf{v}']=0.\]
	 	 % Moreover if $\alpha_1=2/3$ or  $\alpha_1=1$ it satisfies $[\mathbf{u},\mathbf{v}]\mathbf{w}=0$, if  $\alpha_1=2/3$ or  $\alpha_1=1/2$ it satisfies $\mathbf{w}[\mathbf{u},\mathbf{v}]=0$. (Due to $[\mathbf{u},\mathbf{v}]\mathbf{w}=z_1(x_1y_2-x_2y_1)(3\alpha_1-2)(1-\alpha_1)e_2$ and $\mathbf{w}[\mathbf{u},\mathbf{v}]=z_1(x_1y_2-x_2y_1)(3\alpha_1-2)(2\alpha_1-1)e_2$)

	 In $A_{11}$ we have
	 $\mathbf{u}\mathbf{v}=(x_1e_1+x_2e_2)(y_1e_1+y_2e_2)=(x_1y_2+x_2y_1)e_1+(x_1y_1-x_2y_2)e_2$, $$(\mathbf{u}\mathbf{v})\mathbf{w}=((x_1y_2+x_2y_1)z_2+(x_1y_1-x_2y_2)z_1)e_1+((x_1y_2+x_2y_1)z_1-(x_1y_1-x_2y_2)z_2)e_2,$$
	 $$\mathbf{u}(\mathbf{v}\mathbf{w})=(x_1(y_1z_1-y_2z_2)z_2+x_2(y_1z_2+y_2z_1))e_1+(x_1(y_1z_2+y_2z_1)-x_2(y_1z_1-y_2z_2))e_2,$$
	 $$[\mathbf{u},\mathbf{v},\mathbf{w}]=(\mathbf{u}\mathbf{v})\mathbf{w}-\mathbf{u}(\mathbf{v}\mathbf{w})=2(x_1z_2-x_2z_1)(y_2e_1-y_1e_2)$$ and
	 therefore, $$[\mathbf{u},\mathbf{v},\mathbf{w}]=-[\mathbf{w},\mathbf{v},\mathbf{u}],\ [\mathbf{u},\mathbf{v},\mathbf{w}]+[\mathbf{v},\mathbf{w},\mathbf{u}]+[\mathbf{w},\mathbf{u},\mathbf{v}]=0$$ hold true for the algebra $A_{11}$.
	
	In the case of $A_{12}$ the identities $(\mathbf{u}\mathbf{v})\mathbf{w}=\mathbf{u}(\mathbf{v}\mathbf{w})=0$ are valid.
\section{Classification of two-dimensional algebras with respect to identities}	
 Recall that in the opposite algebra $\mathbb{A}^{op}$ to an algebra $(\mathbb{A},\cdot)$ the product $*$ is defined by $\mathbf{u}*\mathbf{v}=\mathbf{v}\cdot \mathbf{u}$. Observe that an algebra $(\mathbb{A},\cdot)$ satisfies a polynomial identity if and only if its ``opposite'' $(\mathbb{A}^{op},*)$ satisfies the corresponding ``opposite'' polynomial identity, i.e., the polynomial identities of an ``opposite'' algebra can be derived from the polynomial identities of the original algebra.

 For an algebra $\mathbb{A}$ with MSC $A$ the MSC $A^{op}$ of  $\mathbb{A}^{op}$ can be easily found. In two-dimensional case it is evident that MSC of $\mathbb{A}^{op}$ with respect to the basis $\mathrm{e}$ is  $$A^{op}=\left(
	\begin{array}{cccc}
	\alpha_1 & \alpha_3 &\alpha_2 & \alpha_4 \\
	\beta_1 & \beta _3 & \beta_2 & \beta_4
	\end{array}\right),\ \mbox{provided that}\ A=\left(
	\begin{array}{cccc}
	\alpha_1 & \alpha_2 &\alpha_3 & \alpha_4 \\
	\beta_1 & \beta _2 & \beta_3 & \beta_4
	\end{array}\right)$$ is MSC of $\mathbb{A}$ with respect to $\mathrm{e}$.
	
	The main motivation to raise the question on the opposite algebras is the fact that the list of canonical representatives presented in Section 2 is not invariant with respect to the ``opposite'' operation and therefore the study of their behavior with respect to this operation is important, moreover, later on we use the result in the classification of two-dimensional algebras with respect to identities. Here are the results on ``opposite''s of two-dimensional algebras.
	
	\begin{thm} \label{Th41}

Let $\mathrm{Char}(\mathbb{F})\neq 2,3.$ The following hold true:
\begin{itemize}
\item	$(A_{1}(\alpha_1,\alpha_2,\alpha_4,\beta_1))^{op}\cong A_{1}(-\alpha_2,-\alpha_1,\beta_1,\alpha_4),$
\item $(A_{2}(\alpha_1,\beta_1,\beta_2))^{op}\cong A_{2}\left(\frac{\alpha_1}{\alpha_1+\beta_2},\frac{\beta_1}{(\alpha_1+\beta_2)\sqrt{\alpha_1+\beta_2}},\frac{1-\alpha_1}{\alpha_1+\beta_2}\right),\ \mbox{whenever}\ \alpha_1+\beta_2\neq 0,$\\
%\item
$(A_{2}(\alpha_1,\beta_1,-\alpha_1))^{op}= A_{6}(\alpha_1,\beta_1),$
\item $(A_{3}(\beta_1,\beta_2))^{op}\cong A_{3}\left(\frac{\beta_1}{\beta_2^2},\frac{1}{\beta_2}\right), \mbox{whenever}\ \beta_2\neq 0,$\\
%\item
$(A_3(\beta_1,0))^{op}=A_7(\beta_1),$
\item	$(A_{4}(\alpha_1,\beta_2))^{op}\cong A_{4}\left(\frac{\alpha_1}{\alpha_1+\beta_2},\frac{1-\alpha_1}{\alpha_1+\beta_2}\right),\ \mbox{whenever}\ \alpha_1+\beta_2\neq 0,$\\
%\item
$(A_4(\alpha_1,-\alpha_1))^{op}=A_8(\alpha_1),$
\item $(A_{5}(\alpha_1))^{op}\cong A_{5}\left(\frac{\alpha_1}{3\alpha_1-1}\right),\ \mbox{whenever}\ \alpha_1\neq \frac{1}{3},$\\
%\item
$\left(A_5\left(\frac{1}{3}\right)\right)^{op}=A_9,$
\item	$(A_6(\alpha_1,\beta_1))^{op}= A_2(\alpha_1,\beta_1,-\alpha_1),$
\item $(A_7(\beta_1))^{op}= A_3(\beta_1,0),$
\item $(A_8(\alpha_1))^{op}= A_4(\alpha_1,-\alpha_1),$
    \item $(A_9)^{op}=A_5\left(\frac{1}{3}\right),$
\item $(A_{10})^{op}= A_{10},$
\item $(A_{11})^{op}= A_{11},$
\item $(A_{12})^{op}= A_{12}.$
\end{itemize}
\end{thm}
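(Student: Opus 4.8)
The plan is to verify each isomorphism (or equality) in the list by exhibiting, for each canonical algebra $A_i$, an explicit change of basis that transforms $A_i^{op}$ into the claimed canonical form. Recall that a base change given by an invertible matrix $g\in GL_2(\mathbb{F})$ sends an MSC $A$ to the MSC $g A (g^{-1}\otimes g^{-1})$ (in the notation of \cite{B2}); so for each item I would: (i) write down $A_i^{op}$ using the rule $A^{op}=\bigl(\begin{smallmatrix}\alpha_1&\alpha_3&\alpha_2&\alpha_4\\ \beta_1&\beta_3&\beta_2&\beta_4\end{smallmatrix}\bigr)$ recorded just above the theorem; (ii) compute the structural invariants that \cite{B2} uses to separate the orbits (e.g.\ traces of the multiplication operators, the value of the commutator $[\mathbf{u},\mathbf{v}]$ as a function of $|\mathbf{u},\mathbf{v}|$, idempotents, the rank/shape of the structure tensor) and read off which family $A_j$ the opposite belongs to; (iii) produce the specific $g$ normalizing $A_i^{op}$ into the tabulated representative of that family and thereby identify the parameters. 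The equalities (the "$=$" items, such as $(A_6(\alpha_1,\beta_1))^{op}=A_2(\alpha_1,\beta_1,-\alpha_1)$, $(A_7(\beta_1))^{op}=A_3(\beta_1,0)$, $(A_8(\alpha_1))^{op}=A_4(\alpha_1,-\alpha_1)$, $(A_9)^{op}=A_5(1/3)$, and $A_{10},A_{11},A_{12}$) are the easy cases: one just transposes the inner two columns of the MSC and observes that the result is literally the tabulated MSC of the asserted algebra with no base change needed. These should be dispatched first as a warm-up and a consistency check, since several of them are the "inverses" of harder items (e.g.\ $(A_6)^{op}=A_2(\cdot,\cdot,-\alpha_1)$ pairs with $(A_2(\alpha_1,\beta_1,-\alpha_1))^{op}=A_6(\alpha_1,\beta_1)$).

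For the genuinely parametrized families $A_1,\dots,A_5$ I would treat them one at a time. For $A_1(\mathbf{c})$: compute $A_1^{op}$, note it again has $\alpha_3=\alpha_2^{new}+1$-type shape after relabelling, and check that the diagonal base change $e_1\mapsto e_1$, $e_2\mapsto\lambda e_2$ (or a suitable shear) sends the parameters $(\alpha_1,\alpha_2,\alpha_4,\beta_1)$ to $(-\alpha_2,-\alpha_1,\beta_1,\alpha_4)$; the swap $\alpha_1\leftrightarrow-\alpha_2$ is exactly what transposing the inner columns does to the relation $\beta_2=-\alpha_1$, $\beta_3=-\alpha_1+1$ versus $\alpha_3=\alpha_2+1$. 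For $A_2,A_4,A_6$ (all of the form $\bigl(\begin{smallmatrix}\alpha_1&0&0&1\\ \beta_1&\beta_2&1-\alpha_1&0\end{smallmatrix}\bigr)$ with various constraints), the opposite swaps the $0$ in position $\alpha_2$ with position $\alpha_3$, i.e.\ moves the "$1-\alpha_1$" from $\beta_3$ to $\beta_2$; a rescaling $e_1\mapsto\mu e_1,\ e_2\mapsto\nu e_2$ then has to restore the normalization $\alpha_4=1$ and the $\beta_3=1-\alpha_1^{new}$ pattern, which forces $\mu,\nu$ in terms of $\alpha_1+\beta_2$ — this is the source of the $\tfrac{1}{\alpha_1+\beta_2}$ and $\tfrac{1}{(\alpha_1+\beta_2)^{3/2}}$ factors, and the appearance of $\sqrt{\alpha_1+\beta_2}$ is why the theorem only claims $\cong$ (and only over fields where this root exists, consistent with the hypotheses on $\mathbb{F}$ from \cite{B2}). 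The degenerate sub-cases $\alpha_1+\beta_2=0$ (and $\beta_2=0$ for $A_3$, $\alpha_1=1/3$ for $A_5$) are precisely where the opposite falls out of the generic family into $A_6,A_7,A_8,A_9$ respectively, and must be handled as separate lines — which the statement already does.

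The main obstacle is bookkeeping rather than conceptual depth: there is no single hard idea, but one must (a) be careful that the base-change action on MSCs is applied correctly (the $g^{-1}\otimes g^{-1}$ on the right, not $g\otimes g$), (b) correctly track the $A_2\cong$-with-$\beta_1\mapsto-\beta_1$ ambiguity noted beneath the tables so that the square-root parameter is only determined up to sign, which is harmless, and (c) verify in each generic case that the invariants of $A_i^{op}$ really do land it in family $A_j$ and not in some other family with a coincidentally similar MSC — this is where an appeal to the completeness and disjointness of the classification in \cite{B2} does the real work. I would organize the write-up as a case analysis with one short paragraph per bullet, in each giving only the base-change matrix $g$ and the resulting parameters, leaving the (routine) matrix multiplication to the reader; the equalities need no $g$ at all and can be stated in a single sentence each.
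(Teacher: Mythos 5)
Your plan coincides with the paper's proof: it verifies each item by computing $A_i^{op}$ via the inner-column swap and then exhibiting an explicit $g$ with $gA_i^{op}(g^{-1}\otimes g^{-1})$ equal to the claimed canonical representative (diagonal $g$'s built from $\alpha_1+\beta_2$, $\beta_2$, or $3\alpha_1-1$ for $A_2$--$A_5$, with the degenerate subcases stated as equalities), and the paper likewise dispatches the $A_6$--$A_9$ items by pairing them with their partners via $(A^{op})^{op}=A$, exactly as you anticipate. The only small deviation is your guess of a diagonal or shear base change for $A_1$ — the paper uses the swap matrix $g=\left(\begin{smallmatrix}0&1\\1&0\end{smallmatrix}\right)$ there — but this does not affect the soundness of the approach.
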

	\begin{proof} We provide non-trivial base change that brings an algebra to its opposite unless they are equal.
	
	 Indeed,  $g(A_{1}(\alpha_1,\alpha_2,\alpha_4,\beta_1))^{op}(g^{-1} \otimes g^{-1})=\left(
	\begin{array}{cccc}
	-\alpha _2 & -\alpha _1 & 1-\alpha _1 & \beta _1 \\
	\alpha _4 & \alpha _2 & \alpha _2+1 & \alpha _1 \\
	\end{array}
	\right)=A_{1}(-\alpha_2,-\alpha_1,\beta_1,\alpha_4),$ where $g=\left(
	\begin{array}{cc}
	0 & 1 \\
	1 & 0 \\
	\end{array}
	\right)$.
\begin{itemize}
\item If $\alpha_1+\beta_2\neq 0$ then
		$g(A_{2}(\alpha_1,\beta_1,\beta_2))^{op}(g^{-1} \otimes g^{-1})= A_{2}\left(\frac{\alpha_1}{\alpha_1+\beta_2},\frac{\beta_1}{(\alpha_1+\beta_2)\sqrt{\alpha_1+\beta_2}},\frac{1-\alpha_1}{\alpha_1+\beta_2}\right),$
	 where $g=\left(
	 \begin{array}{cc}
	 \alpha_1+\beta_2 & 0 \\
	 0 & \sqrt{\alpha_1+\beta_2} \\
	 \end{array}
	 \right);$
\item If $\alpha_1+\beta_2= 0$ then $(A_{2}(\alpha_1,\beta_1,-\alpha_1))^{op}= A_{6}(\alpha_1,\beta_1).$
\end{itemize}
\begin{itemize}
\item If $\beta_2\neq 0$ then  \[g(A_{3}(\alpha_1,\beta_1, \beta_2)^{op}(g^{-1} \otimes g^{-1})=\left(
	\begin{array}{cccc}
	0 & 1 &1 & 0 \\
	\frac{\beta_1}{\beta_2^2} & \frac{1}{\beta_2} & 1& -1 \\
	\end{array}
	\right)=A_{3}\left(\frac{\beta_1}{\beta_2^2},\frac{1}{\beta_2}\right),\] where $g=\left(
	\begin{array}{cc}
	\beta_2 & 0 \\
	0 & 1 \\
	\end{array}
	\right);$
\item $(A_3(\beta_1,0))^{op}=A_7(\beta_1)$.
\end{itemize}
\begin{itemize}
\item If  $\alpha_1+\beta_2\neq 0$ then
	  \[g(A_{4}(\alpha_1,\beta_2))^{op}(g^{-1} \otimes g^{-1})=\left(
	\begin{array}{cccc}
	\frac{\alpha_1}{\alpha_1+\beta_2} & 0 & 0 & 0 \\
	0&\frac{1-\alpha_1}{\alpha_1+\beta_2} & 1-\frac{\alpha_1}{\alpha_1+\beta_2} & 0 \\
	\end{array}
	\right)= A_{4}\left(\frac{\alpha_1}{\alpha_1+\beta_2},\frac{1-\alpha_1}{\alpha_1+\beta_2}\right),\] where $g=\left(
	\begin{array}{cc}
	\alpha_1+\beta_2 & 0 \\
	0 & 1 \\
	\end{array}
	\right);$
\item $ (A_4(\alpha_1,-\alpha_1))^{op}=A_8(\alpha_1)$.
\end{itemize}
\begin{itemize}
\item If $3\alpha_1-1\neq 0$ and  $g=\left(
	\begin{array}{cc}
	3\alpha_1-1 & 0 \\
	0 & (3\alpha_1-1)^2 \\
	\end{array}
	\right)$ then
		 \[g(A_{5}(\alpha_1))^{op}(g^{-1} \otimes g^{-1})=\left(
	\begin{array}{cccc}
	\frac{\alpha_1}{3\alpha_1+1} & 0 & 0 & 0 \\
	1 & 2\frac{\alpha_1}{3\alpha_1-1}-1 &1-\frac{\alpha_1}{3\alpha_1+1}& 0 \\
	\end{array}
	\right)=A_{5}\left(\frac{\alpha_1}{3\alpha_1-1}\right);\]
\item $\left(A_5\left(\frac{1}{3}\right)\right)^{op}=A_9.$
\end{itemize}
	Now due to the equality $(A^{op})^{op}=A$ the above obtained isomorphisms and equalities imply that

 $$(A_6(\alpha_1,\beta_1))^{op}= A_2(\alpha_1,\beta_1,-\alpha_1),\ (A_7(\beta_1))^{op}= A_3(\beta_1,0),$$

  $$(A_8(\alpha_1))^{op}= A_4(\alpha_1,-\alpha_1),\ (A_9)^{op}=A_5\left(\frac{1}{3}\right).$$

   The equalities \[(A_{10})^{op}= A_{10},\
	(A_{11})^{op}= A_{11},\ (A_{12})^{op}= A_{12}\] are evident.
	\end{proof}
	\begin{cor} Let $\mathrm{Char}(\mathbb{F})\neq 2,3$.  Then, up to isomorphism, there exist only the following nontrivial two-dimensional algebras $\mathbb{A}$ with $\mathbb{A}^{op}\cong \mathbb{A}$:
\[A_1(\alpha_1,-\alpha_1,\alpha_2,\alpha_2),\ A_2(\alpha_1,\beta_1,1-\alpha_1),\ A_2(0,0,-1),\ A_3(\beta_1,-1),\ A_3(\beta_1,1),\ A_4(\alpha_1, 1-\alpha_1),\] \[ A_4(0, -1),\ A_5\left(\frac{2}{3}\right),\ A_5(0),\ A_{10},\ A_{11},\ A_{12}.\]
 \end{cor}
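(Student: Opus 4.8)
The plan is to run through the twelve canonical algebras $A_1,\dots,A_{12}$ of Section~2, to read off the canonical form of $A_i^{op}$ from Theorem~\ref{Th41}, and then to decide, with the help of the isomorphism criteria of \cite{B2}, for which parameter values this canonical form still represents the isomorphism class of $A_i$ itself. Since the Section~2 list is a complete family of canonical representatives, two of its members are isomorphic precisely when they lie in the same row with parameters related by the intra-family criterion of \cite{B2}; in particular, whenever the canonical form of $A_i^{op}$ produced by Theorem~\ref{Th41} lies in a \emph{different} row than $A_i$, the algebra $A_i$ cannot be self-opposite. This divides the work into three groups.

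\emph{The easy groups.} By Theorem~\ref{Th41} one has $A_6(\alpha_1,\beta_1)^{op}=A_2(\alpha_1,\beta_1,-\alpha_1)$, $A_7(\beta_1)^{op}=A_3(\beta_1,0)$, $A_8(\alpha_1)^{op}=A_4(\alpha_1,-\alpha_1)$ and $A_9^{op}=A_5(\tfrac13)$; in each case the opposite sits in another row, so none of $A_6,A_7,A_8,A_9$ is self-opposite. On the other hand $A_{10},A_{11},A_{12}$ are commutative, equivalently $A_i^{op}=A_i$ as matrices of structural constants (as also recorded in Theorem~\ref{Th41}), so these three are self-opposite and yield the members $A_{10},A_{11},A_{12}$ of the list.

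\emph{The families $A_1,\dots,A_5$.} For these I would impose that the parameters of $A_i^{op}$, as displayed in Theorem~\ref{Th41}, agree with those of $A_i$. For $A_1$, comparing $A_1(\alpha_1,\alpha_2,\alpha_4,\beta_1)^{op}\cong A_1(-\alpha_2,-\alpha_1,\beta_1,\alpha_4)$ with $A_1(\alpha_1,\alpha_2,\alpha_4,\beta_1)$ forces $\alpha_2=-\alpha_1$ and $\beta_1=\alpha_4$, that is, after relabelling, the subfamily $A_1(\alpha_1,-\alpha_1,\alpha_2,\alpha_2)$. For $A_2$ with $\alpha_1+\beta_2\neq0$, equating first coordinates gives $\alpha_1(1-\alpha_1-\beta_2)=0$: on the branch $\alpha_1+\beta_2=1$ the formula of Theorem~\ref{Th41} reduces to $A_2(\alpha_1,\beta_1,1-\alpha_1)$, which is hence self-opposite, while on the branch $\alpha_1=0$ the third coordinate forces $\beta_2^{2}=1$, the value $\beta_2=1$ being already covered and $\beta_2=-1$ forcing $\beta_1=0$ (using the $\beta_1\mapsto-\beta_1$ identification and $\mathrm{Char}\,\mathbb F\neq2$), which gives $A_2(0,0,-1)$; the excluded degenerate case $\alpha_1+\beta_2=0$ sends $A_2$ into the row $A_6$. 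For $A_3$ with $\beta_2\neq0$, equating second coordinates gives $\beta_2=\pm1$, whereupon the first coordinates match automatically, producing $A_3(\beta_1,1)$ and $A_3(\beta_1,-1)$; the case $\beta_2=0$ sends $A_3$ into the row $A_7$. For $A_4$ with $\alpha_1+\beta_2\neq0$, again $\alpha_1(1-\alpha_1-\beta_2)=0$, giving $A_4(\alpha_1,1-\alpha_1)$ from $\alpha_1+\beta_2=1$ and $A_4(0,-1)$ from $\alpha_1=0$, $\beta_2=-1$; the case $\alpha_1+\beta_2=0$ sends $A_4$ into the row $A_8$. For $A_5$ with $\alpha_1\neq\tfrac13$, $\tfrac{\alpha_1}{3\alpha_1-1}=\alpha_1$ gives $\alpha_1(3\alpha_1-2)=0$, hence $A_5(0)$ and $A_5(\tfrac23)$; the value $\alpha_1=\tfrac13$ sends $A_5$ into the row $A_9$. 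Collecting the three groups reproduces exactly the list in the statement.

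The only delicate point is the use of the classification of \cite{B2}: one must know that distinct rows of the Section~2 list carry disjoint sets of isomorphism classes even at special parameter values --- this is what validates the first group above --- and one must use the exact intra-row criterion, in particular the $\beta_1\mapsto-\beta_1$ ambiguity for $A_2$, which is precisely why the sub-branch $\beta_2=-1$ collapses to the single algebra $A_2(0,0,-1)$ rather than to a whole one-parameter family. It also has to be checked that the radicals $\sqrt{\alpha_1+\beta_2}$ appearing in the formula for $A_2^{op}$ create no spurious matches: on the surviving branch $\alpha_1+\beta_2=1$ this radical equals $1$, and on the branch $\alpha_1=0$, $\beta_2=-1$ the coefficient $\beta_1$ is forced to vanish, so no assumption on the existence of square roots in $\mathbb F$ is needed.
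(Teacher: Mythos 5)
Your proposal is correct and follows exactly the derivation the paper intends: the corollary is stated as an immediate consequence of Theorem \ref{Th41}, obtained by solving the fixed-point conditions row by row and discarding the families whose opposites land in a different row. Your extra care with the $\beta_1\mapsto-\beta_1$ identification for $A_2$ (which is what collapses the $\beta_2=-1$ branch to the single algebra $A_2(0,0,-1)$) and with the radical $\sqrt{\alpha_1+\beta_2}$ is exactly the bookkeeping needed to recover the stated list.
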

	
		\begin{thm} \label{Th43} Let $\mathrm{Char}(\mathbb{F})= 2$. Then the following isomorphisms and equalities hold true
\begin{itemize}
\item $(A_{1,2}(\alpha_1,\alpha_2,\alpha_4,\beta_1))^{op}\cong A_{1,2}(\alpha_2,\alpha_1,\beta_1,\alpha_4),$
\item $(A_{2,2}(\alpha_1,\beta_1,\beta_2))^{op}\cong A_{2,2}\left(\frac{\alpha_1}{\alpha_1+\beta_2},\frac{\beta_1}{(\alpha_1+\beta_2)\sqrt{\alpha_1+\beta_2}},\frac{1+\alpha_1}{\alpha_1+\beta_2}\right),\ \mbox{whenever}\ \alpha_1+\beta_2\neq 0,$\\
%\item
$(A_{2,2}(\alpha_1,\beta_1,\alpha_1))^{op}= A_{6,2}(\alpha_1,\beta_1),$
\item $(A_{3,2}(\alpha_1,\beta_2))^{op}\cong A_{3,2}\left(\frac{\alpha_1}{\alpha_1+\beta_2},\frac{1+\alpha_1}{\alpha_1+\beta_2}\right), \mbox{whenever}\ \alpha_1+\beta_2\neq 0,$\\ $(A_{3,2}(\alpha_1,\alpha_1))^{op}=A_{7,2}(\alpha_1),$
\item $(A_{4,2}(\alpha_1,\beta_2))^{op}\cong A_{4,2}\left(\frac{\alpha_1}{\alpha_1+\beta_2},\frac{1+\alpha_1}{\alpha_1+\beta_2}\right),\ \mbox{whenever}\ \alpha_1+\beta_2\neq 0,$\\ $(A_{4,2}(\alpha_1,\alpha_1))^{op}=A_{8,2}(\alpha_1),$
\item $(A_{5,2}(\alpha_1))^{op}\cong A_{5,2}\left(\frac{\alpha_1}{\alpha_1+1}\right),\ \mbox{whenever}\ \alpha_1\neq 1,$\\
		$(A_{5,2}(1))^{op}=A_{9,2},$
\item	$(A_{6,2}(\alpha_1,\beta_1))^{op}= A_{2,2}(\alpha_1,\beta_1,\alpha_1),$
 \item $(A_{7,2}(\alpha_1))^{op}= A_{3,2}(\alpha_1,\alpha_1),$
 \item $(A_{8,2}(\alpha_1))^{op}= A_{4,2}(\alpha_1,\alpha_1),$
 \item $(A_{9,2})^{op}=A_{5,2}(1),$
 \item $(A_{10,2})^{op}= A_{10,2},$
\item $(A_{11,2})^{op}= A_{11,2},\ (A_{12,2})^{op}= A_{12,2}.$
\end{itemize}
	\end{thm}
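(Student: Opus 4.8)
The plan is to follow, step for step, the method of the proof of Theorem~\ref{Th41}: for each canonical family $\mathbb{A}$ appearing in the $\mathrm{Char}(\mathbb{F})=2$ table I would exhibit an explicit $g\in GL_2(\mathbb{F})$ and verify the matrix identity $g\,A^{op}(g^{-1}\otimes g^{-1})=B$, where $A^{op}$ is obtained from $A$ by interchanging its second and third columns (the rule for the opposite MSC recalled above) and $B$ is the claimed canonical representative. Since the structure-constant transformation law is a fixed polynomial expression in the entries of $g$ and $g^{-1}$, each such verification is a finite linear-algebra computation; the only place the standing hypotheses on $\mathbb{F}$ enter is that quadratics split, which is what lets us form $\sqrt{\alpha_1+\beta_2}$.

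Concretely, I would take the flip $g=\left(\begin{smallmatrix}0&1\\1&0\end{smallmatrix}\right)$ for the generic family $A_{1,2}$, the diagonal matrix $g=\mathrm{diag}\bigl(\alpha_1+\beta_2,\sqrt{\alpha_1+\beta_2}\bigr)$ for $A_{2,2}$, $g=\mathrm{diag}(\alpha_1+\beta_2,1)$ for $A_{3,2}$ and $A_{4,2}$, and $g=\mathrm{diag}\bigl(\alpha_1+1,(\alpha_1+1)^2\bigr)$ for $A_{5,2}$; these are the natural analogues of the matrices used in Theorem~\ref{Th41} after specialising $3$ to $1$, and the stated ``whenever'' conditions are exactly what makes them invertible. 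For the complementary parameter values the relevant denominator vanishes, i.e.\ $\beta_2=\alpha_1$ (since $-\alpha_1=\alpha_1$ in characteristic $2$), respectively $\alpha_1=1$ for $A_{5,2}$; a direct substitution then shows that $(A_{2,2}(\alpha_1,\beta_1,\alpha_1))^{op}$, $(A_{3,2}(\alpha_1,\alpha_1))^{op}$, $(A_{4,2}(\alpha_1,\alpha_1))^{op}$, $(A_{5,2}(1))^{op}$ coincide, as matrices of structural constants with respect to $\mathrm{e}$, with $A_{6,2}(\alpha_1,\beta_1)$, $A_{7,2}(\alpha_1)$, $A_{8,2}(\alpha_1)$, $A_{9,2}$, so no base change is needed.

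The remaining items then cost nothing. Applying the involution $(A^{op})^{op}=A$ to the four equalities just recorded yields $(A_{6,2}(\alpha_1,\beta_1))^{op}=A_{2,2}(\alpha_1,\beta_1,\alpha_1)$, $(A_{7,2}(\alpha_1))^{op}=A_{3,2}(\alpha_1,\alpha_1)$, $(A_{8,2}(\alpha_1))^{op}=A_{4,2}(\alpha_1,\alpha_1)$ and $(A_{9,2})^{op}=A_{5,2}(1)$; and each of $A_{10,2}$, $A_{11,2}$, $A_{12,2}$ is commutative (its row in the table has $\alpha_2=\alpha_3$ and $\beta_2=\beta_3$), so $A^{op}=A$ for these three.

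The main obstacle, such as it is, is careful bookkeeping in characteristic $2$: because $-1=1$, all signs have been absorbed into the normal forms of the second table, so after computing $g\,A^{op}(g^{-1}\otimes g^{-1})$ one must match the output against the correct char-$2$ pattern (the entry $1+\alpha_1$ rather than $1-\alpha_1$, and so on), and one should check that no intermediate step divides by $2$ — it does not, since every denominator that occurs is $\alpha_1+\beta_2$ or $\alpha_1+1$. Once the parameter subcases are arranged so that each algebra in the list is reached exactly once, the proof reduces to the same routine verifications as in Theorem~\ref{Th41}.
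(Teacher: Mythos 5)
Your proposal is correct and follows exactly the route the paper takes: the paper's own proof of this theorem simply states that it is "similar to that of the previous theorem" and records the single sample computation $g\,(A_{3,2}(\alpha_1,\beta_2))^{op}(g^{-1})^{\otimes 2}=A_{3,2}\bigl(\tfrac{\alpha_1}{\alpha_1+\beta_2},\tfrac{1+\alpha_1}{\alpha_1+\beta_2}\bigr)$ with $g=\mathrm{diag}(\alpha_1+\beta_2,1)$, which is precisely one of the base changes you list. Your write-up is in fact more complete than the paper's, since you specify all the transition matrices (each the characteristic-$2$ specialisation of the one used in Theorem~\ref{Th41}), handle the degenerate parameter values by direct column swap, and dispose of the remaining items via $(A^{op})^{op}=A$ and commutativity, exactly as the paper intends.
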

\begin{proof} The proof is similar to that of previous theorem, for example,
	$$g(A_{3,2}(\alpha_1,\beta_2))^{op}(g^{-1})^{\otimes 2}=A_{3,2}\left(\frac{\alpha_1}{\alpha_1+\beta_2},\frac{1+\alpha_1}{\alpha_1+\beta_2}\right),$$

\qquad\qquad\qquad\qquad\qquad\qquad\qquad\qquad\qquad\qquad\qquad where
	$g=\left(
	\begin{array}{cc}
	\alpha_1+\beta_2 & 0 \\
	0 & 1 \\
	\end{array}
	\right)$ and $\alpha_1+\beta_2\neq 0$.
	\end{proof}
		\begin{cor} Let $\mathrm{Char}(\mathbb{F})= 2.$ Then, up to isomorphism, there exist only the following nontrivial two-dimensional algebras with $\mathbb{A}^{op}\cong \mathbb{A}$:
		\[A_{1,2}(\alpha_1,\alpha_1,\alpha_2,\alpha_2),\ A_{2,2}(\alpha_1,\beta_1,1+\alpha_1),\ A_{3,2}(\alpha_1,1+\alpha_1),\ A_{4,2}(\alpha_1,1+\alpha_1),\ A_{5,2}(0),\ A_{10,2},\ A_{11,2},\ A_{12,2}.\]
		
	\end{cor}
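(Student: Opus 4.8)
The plan is to run through the twelve families of Theorem~\ref{Th43} and, for each one, translate the requirement $\mathbb{A}^{op}\cong\mathbb{A}$ into equations on the structure parameters, using the classification of \cite{B2}: the canonical representatives of Section~2 are pairwise non-isomorphic, up to the stated identifications $A_{2,2}(\alpha_1,\beta_1,\beta_2)\cong A_{2,2}(\alpha_1,-\beta_1,\beta_2)$ and $A_{6,2}(\alpha_1,\beta_1)\cong A_{6,2}(\alpha_1,-\beta_1)$, which are vacuous when $\mathrm{Char}(\mathbb{F})=2$. This disposes immediately of several families. For $A_{6,2},A_{7,2},A_{8,2},A_{9,2}$ Theorem~\ref{Th43} exhibits the opposite inside a \emph{different} family ($A_{2,2},A_{3,2},A_{4,2},A_{5,2}$ respectively), so no member of these four is self-opposite; for $A_{10,2},A_{11,2},A_{12,2}$ the theorem gives $\mathbb{A}^{op}=\mathbb{A}$, so all of them occur in the list.

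The real content is in the families $A_{1,2},\dots,A_{5,2}$. For $A_{1,2}$, from $(A_{1,2}(\alpha_1,\alpha_2,\alpha_4,\beta_1))^{op}\cong A_{1,2}(\alpha_2,\alpha_1,\beta_1,\alpha_4)$ and uniqueness of the canonical form one equates the parameter tuples, forcing $\alpha_1=\alpha_2$ and $\alpha_4=\beta_1$; after relabelling this is exactly $A_{1,2}(\alpha_1,\alpha_1,\alpha_2,\alpha_2)$. For each of $A_{2,2},A_{3,2},A_{4,2}$ I would split on whether $\alpha_1+\beta_2\neq0$ or $\alpha_1+\beta_2=0$. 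In the boundary case $\alpha_1+\beta_2=0$ the opposite jumps to $A_{6,2},A_{7,2},A_{8,2}$ and hence contributes nothing. In the generic case one equates the last structure parameter of $\mathbb{A}$ with that of $\mathbb{A}^{op}$; clearing the denominator $\alpha_1+\beta_2$ and using $\mathrm{Char}(\mathbb{F})=2$ this becomes $\alpha_1(\beta_2+1)=(\beta_2+1)^2$, so either $\beta_2\neq1$, which gives $\alpha_1+\beta_2=1$ and hence $\beta_2=1+\alpha_1$, i.e. the listed algebras $A_{2,2}(\alpha_1,\beta_1,1+\alpha_1)$, $A_{3,2}(\alpha_1,1+\alpha_1)$, $A_{4,2}(\alpha_1,1+\alpha_1)$; or $\beta_2=1$, in which case matching the first parameter (e.g. $\alpha_1=\alpha_1/(\alpha_1+1)$ for $A_{2,2}$) forces $\alpha_1=0$, putting us back in the previous subcase. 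Finally $A_{5,2}$ is the one-parameter analogue: $(A_{5,2}(\alpha_1))^{op}\cong A_{5,2}(\alpha_1/(\alpha_1+1))$ gives $\alpha_1^2=0$, so $\alpha_1=0$, while $(A_{5,2}(1))^{op}=A_{9,2}\not\cong A_{5,2}(1)$. Assembling the surviving cases produces precisely the eight items claimed, and that each of them genuinely satisfies $\mathbb{A}^{op}\cong\mathbb{A}$ is read off Theorem~\ref{Th43} backwards.

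The step needing the most care is the generic/boundary bookkeeping — this is also where the characteristic~$2$ list turns out strictly shorter than the corollary following Theorem~\ref{Th41}. In odd characteristic the square root $\sqrt{\alpha_1+\beta_2}$ appearing in the opposite of $A_{2}$ produces the sporadic self-opposite algebra $A_{2}(0,0,-1)$ that lies outside the one-parameter family $A_{2}(\alpha_1,\beta_1,1-\alpha_1)$; I would verify that in characteristic $2$ the corresponding subcase ($\beta_2=1$, $\alpha_1=0$) collapses into $A_{2,2}(\alpha_1,\beta_1,1+\alpha_1)$ instead of yielding a new algebra, and likewise that the $\beta_2=1$ subcases of $A_{3,2},A_{4,2}$ and the excluded value $\alpha_1=1$ of $A_{5,2}$ produce nothing new. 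Once these exclusions are confirmed, no isolated parameter value escapes the description, and the corollary follows.
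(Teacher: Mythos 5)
Your proposal is correct and follows essentially the same route as the paper, which derives this corollary directly from Theorem~\ref{Th43} by matching canonical parameters family by family; your explicit factorization $(1+\beta_2)(1+\beta_2+\alpha_1)=0$ and the check that the $\beta_2=1$ subcase collapses to $\alpha_1=0$ (hence into $\beta_2=1+\alpha_1$) correctly accounts for why no sporadic algebra analogous to $A_2(0,0,-1)$ appears in characteristic $2$.
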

	
		\begin{thm} \label{Th45} Let $\mathrm{Char}(\mathbb{F})=3$. The following isomorphisms and equalities are true
\begin{itemize}
\item $(A_{1,3}(\alpha_1,\alpha_2,\alpha_4,\beta_1))^{op}\cong A_{1,3}(-\alpha_2,-\alpha_1,\beta_1,\alpha_4),$
\item $(A_{2,3}(\alpha_1,\beta_1,\beta_2))^{op}\cong A_{2,3}\left(\frac{\alpha_1}{\alpha_1+\beta_2},\frac{\beta_1}{(\alpha_1+\beta_2)\sqrt{\alpha_1+\beta_2}},\frac{1-\alpha_1}{\alpha_1+\beta_2}\right),\ \mbox{whenever}\ \alpha_1+\beta_2\neq 0,$\\
%\item		
$(A_{2,3}(\alpha_1,\beta_1,-\alpha_1))^{op}= A_{6,3}(\alpha_1,\beta_1),$
\item	$(A_{3,3}(\beta_1,\beta_2))^{op}\cong A_{3,3}\left(\frac{\beta_1}{\beta_2^2},\frac{1}{\beta_2}\right), \mbox{whenever}\ \beta_2\neq 0,$\\ $(A_{3,3}(\beta_1,0))^{op}=A_{7,3}(\beta_1),$
\item	$(A_{4,3}(\alpha_1,\beta_2))^{op}\cong A_{4,3}\left(\frac{\alpha_1}{\alpha_1+\beta_2},\frac{1-\alpha_1}{\alpha_1+\beta_2}\right),\ \mbox{whenever}\ \alpha_1+\beta_2\neq 0,$\\ $(A_{4,3}(\alpha_1,-\alpha_1))^{op}=A_{8,3}(\alpha_1),$
\item		$(A_{5,3}(\alpha_1))^{op}\cong A_{5,3}(-\alpha_1),$
\item		$(A_{6,3}(\alpha_1,\beta_1))^{op}= A_{2,3}(\alpha_1,\beta_1,-\alpha_1),$
\item $(A_{7,3}(\beta_1))^{op}= A_{3,3}(\beta_1,0),$
\item $(A_{8,3}(\alpha_1))^{op}= A_{4,3}(\alpha_1,-\alpha_1),$
\item $(A_{9,3})^{op}=A_{9,3},$
\item $(A_{10,3})^{op}= A_{10,3},$
\item $(A_{11,3})^{op}= A_{11,3},$
\item $(A_{12,3})^{op}= A_{12,3}.$
\end{itemize}
	\end{thm}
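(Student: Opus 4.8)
The plan is to mimic the proof of Theorem \ref{Th41} verbatim, exploiting the fact that in characteristic $3$ the same canonical list appears (with $-1=2$, $-\alpha_1=2\alpha_1$, etc.) and the structural bookkeeping is identical. First I would recall that $A^{op}$ is obtained from $A$ by swapping the columns $(\alpha_2,\beta_2)\leftrightarrow(\alpha_3,\beta_3)$, and that a base change by $g\in GL_2(\mathbb{F})$ sends an MSC $A$ to $gA(g^{-1}\otimes g^{-1})$. So for each family $A_{i,3}(\mathbf{c})$ I would write down $A_{i,3}(\mathbf{c})^{op}$ by the column swap, then exhibit an explicit $g$ (diagonal in all cases except $A_{1,3}$, where $g=\left(\begin{smallmatrix}0&1\\1&0\end{smallmatrix}\right)$) and verify by direct computation that $g\,A_{i,3}(\mathbf{c})^{op}\,(g^{-1}\otimes g^{-1})$ equals the claimed canonical representative. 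The choices of $g$ are dictated by the characteristic-$0$ (i.e. $\mathrm{Char}\neq 2,3$) case: $g=\mathrm{diag}(\alpha_1+\beta_2,\sqrt{\alpha_1+\beta_2})$ for $A_{2,3}$, $g=\mathrm{diag}(\beta_2,1)$ for $A_{3,3}$, $g=\mathrm{diag}(\alpha_1+\beta_2,1)$ for $A_{4,3}$, and so on.

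The one genuinely new point — and the only place where characteristic $3$ differs from the $\mathrm{Char}\neq 2,3$ analysis — is the family $A_{5,3}(\alpha_1)$. In the general case one has $(A_5(\alpha_1))^{op}\cong A_5\left(\frac{\alpha_1}{3\alpha_1-1}\right)$, obtained via $g=\mathrm{diag}(3\alpha_1-1,(3\alpha_1-1)^2)$; but in characteristic $3$ the quantity $3\alpha_1-1$ collapses to $-1$, which is a nonzero scalar, so the scaling factor becomes independent of $\alpha_1$. I would take $g=\mathrm{diag}(-1,1)$ (equivalently $g=\mathrm{diag}(2,1)$), compute $g\,A_{5,3}(\alpha_1)^{op}\,(g^{-1}\otimes g^{-1})$, and check that the parameter $\alpha_1$ transforms to $-\alpha_1$ — using that in this characteristic $A_{5,3}(\alpha_1)$ has $\beta_2=-\alpha_1-1$ rather than $2\alpha_1-1$. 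This yields the clean statement $(A_{5,3}(\alpha_1))^{op}\cong A_{5,3}(-\alpha_1)$ with no exceptional sub-case, which is why there is no $A_{9,3}$ appearing on the right of that line (in contrast to $A_9$ in Theorem \ref{Th41}); instead $A_{9,3}$ turns out to be self-opposite, $(A_{9,3})^{op}=A_{9,3}$, which I would check by the column swap directly from its MSC.

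For the remaining bullets I would proceed exactly as in Theorem \ref{Th41}: the equalities $(A_{6,3}(\alpha_1,\beta_1))^{op}=A_{2,3}(\alpha_1,\beta_1,-\alpha_1)$, $(A_{7,3}(\beta_1))^{op}=A_{3,3}(\beta_1,0)$, and $(A_{8,3}(\alpha_1))^{op}=A_{4,3}(\alpha_1,-\alpha_1)$ follow by the column swap (observe that the MSCs of $A_{6,3},A_{7,3},A_{8,3}$ are literally the column swaps of particular specializations of $A_{2,3},A_{3,3},A_{4,3}$), and then by the involution identity $(A^{op})^{op}=A$ together with the isomorphisms already established for the latter families. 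The self-opposite cases $A_{9,3},A_{10,3},A_{11,3},A_{12,3}$ are evident: their MSCs are symmetric under the $(\alpha_2,\beta_2)\leftrightarrow(\alpha_3,\beta_3)$ swap (for $A_{10,3},A_{11,3},A_{12,3}$ the algebras are commutative, and for $A_{9,3}$ one reads $\alpha_2=\alpha_3=1$, $\beta_2=\beta_3=0$ off the table).

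I do not expect any real obstacle here; the only thing to watch is the arithmetic peculiar to characteristic $3$ — namely that $3\alpha_1-1=-1$ and $2\alpha_1-1=-\alpha_1-1$ — which is precisely what makes the $A_{5,3}$ line collapse to its stated simple form. All verifications are finite $2\times 4$ matrix computations of the form $g\,A^{op}\,(g^{-1}\otimes g^{-1})$, so the write-up can display one representative computation (say for $A_{5,3}$, since it is the non-routine one) and assert that the others are entirely analogous, exactly as the authors do in the proof of Theorem \ref{Th43}.
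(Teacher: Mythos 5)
Your proposal is correct and follows exactly the base-change method the paper uses for Theorems \ref{Th41} and \ref{Th43} (the paper in fact prints no separate proof for this theorem, leaving it as ``similar''). Your identification of $A_{5,3}$ as the one genuinely new case, with $3\alpha_1-1=-1$ forcing $g=\mathrm{diag}(-1,1)$ and yielding $(A_{5,3}(\alpha_1))^{op}\cong A_{5,3}(-\alpha_1)$ with $A_{9,3}$ becoming self-opposite, checks out against the tabulated structure constants.
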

	\begin{cor} In the case of $Char(\mathbb{F})= 3$, up to isomorphism, there exist only the following nontrivial two-dimensional algebras with $\mathbb{A}^{op}\cong \mathbb{A}$:
	\[A_{1,3}(\alpha_1,-\alpha_1,\alpha_2,\alpha_2),\ A_{2,3}(\alpha_1,\beta_1,1-\alpha_1),\ A_{2,3}(0,0,-1),\ A_{3,3}(\beta_1,-1),\ A_{3,3}(\beta_1,1),\] \[A_{4,3}(\alpha_1,1-\alpha_1),\ A_{4,3}(0,-1),\ A_{5,3}(0),\ A_{9,3},\ A_{10,3},\ A_{11,3},\ A_{12,3}.\]
	
\end{cor}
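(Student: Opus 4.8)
I would derive this corollary directly from Theorem~\ref{Th45}, proceeding family by family and using that the canonical representatives of \cite{B2} form an irredundant list, so that within a single family $\mathbb{A}^{op}\cong\mathbb{A}$ is equivalent to requiring that the canonical parameter tuple of $\mathbb{A}^{op}$ computed in Theorem~\ref{Th45} agree with that of $\mathbb{A}$ — up to the symmetry $A_{2,3}(\alpha_1,\beta_1,\beta_2)\cong A_{2,3}(\alpha_1,-\beta_1,\beta_2)$ in the one family that carries it. For $A_{6,3},A_{7,3},A_{8,3}$, whose opposites lie in the distinct families $A_{2,3},A_{3,3},A_{4,3}$, and likewise for the degenerate subcases $\alpha_1+\beta_2=0$ of $A_{2,3}$ and $A_{4,3}$ and $\beta_2=0$ of $A_{3,3}$ (where the opposite again jumps to another family), the condition $\mathbb{A}^{op}\cong\mathbb{A}$ would force an isomorphism between two different canonical representatives and is therefore impossible; so none of these contribute, which explains their absence from the list.

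The easy families come first: $A_{9,3},A_{10,3},A_{11,3},A_{12,3}$ equal their own opposites and hence all occur; for $A_{5,3}$ the relation $-\alpha_1=\alpha_1$ together with $\mathrm{Char}(\mathbb{F})=3$ (so $2\neq0$) gives only $\alpha_1=0$, i.e. $A_{5,3}(0)$; and for $A_{1,3}$ the identification $(-\alpha_2,-\alpha_1,\beta_1,\alpha_4)=(\alpha_1,\alpha_2,\alpha_4,\beta_1)$ collapses to $\alpha_1=-\alpha_2$ and $\alpha_4=\beta_1$, yielding $A_{1,3}(\alpha_1,-\alpha_1,\alpha_2,\alpha_2)$ after renaming.

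Then come the two remaining genuine case splits. For $A_{3,3}(\beta_1,\beta_2)$ with $\beta_2\neq0$ one needs $1/\beta_2=\beta_2$ and $\beta_1/\beta_2^{2}=\beta_1$; the first gives $\beta_2=\pm1$ and then the second is automatic, producing $A_{3,3}(\beta_1,1)$ and $A_{3,3}(\beta_1,-1)$. For $A_{4,3}(\alpha_1,\beta_2)$ with $\alpha_1+\beta_2\neq0$ the equations $\frac{\alpha_1}{\alpha_1+\beta_2}=\alpha_1$ and $\frac{1-\alpha_1}{\alpha_1+\beta_2}=\beta_2$ split into $\alpha_1+\beta_2=1$ (consistent, giving $A_{4,3}(\alpha_1,1-\alpha_1)$) and $\alpha_1=0$ (then $1/\beta_2=\beta_2$, so $\beta_2=\pm1$, with $+1$ already covered and $-1$ giving $A_{4,3}(0,-1)$). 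The family $A_{2,3}(\alpha_1,\beta_1,\beta_2)$ with $\alpha_1+\beta_2\neq0$ is handled the same way on the $\alpha_1$- and $\beta_2$-coordinates: in the branch $\alpha_1+\beta_2=1$ one has $\sqrt{\alpha_1+\beta_2}=1$, so the first coordinate $\frac{\beta_1}{(\alpha_1+\beta_2)\sqrt{\alpha_1+\beta_2}}$ is automatically $\beta_1$ and we obtain $A_{2,3}(\alpha_1,\beta_1,1-\alpha_1)$; in the branch $\alpha_1=0$, $\beta_2=-1$ one must additionally match $\frac{\beta_1}{-\sqrt{-1}}$ against $\pm\beta_1$, and since $-\sqrt{-1}\neq\pm1$ this forces $\beta_1=0$, giving $A_{2,3}(0,0,-1)$.

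The main delicate point is precisely this last family: one must carry the square root from the change-of-basis matrix and the extra $\beta_1\mapsto-\beta_1$ symmetry through the computation, and check carefully that in the $\beta_2=-1$ branch no nonzero $\beta_1$ survives. A secondary point to make explicit is the appeal to irredundancy of the classification of \cite{B2}, which both legitimizes replacing ``isomorphic'' by ``equal canonical parameters'' and rules out the boundary subcases and the families $A_{6,3},A_{7,3},A_{8,3}$. Assembling the surviving cases gives exactly the algebras in the statement.
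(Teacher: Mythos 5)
Your derivation is correct and is exactly the argument the paper intends: the corollary is stated without proof immediately after Theorem~\ref{Th45}, and the expected reading is precisely your case-by-case matching of canonical parameters (with the $\beta_1\mapsto-\beta_1$ symmetry in the $A_{2,3}$ family and the exclusion of the boundary subcases whose opposites land in $A_{6,3},A_{7,3},A_{8,3}$). Your checks of the delicate points --- the square root in the $A_{2,3}(0,\beta_1,-1)$ branch forcing $\beta_1=0$, and $2\neq 0$ in characteristic $3$ giving only $A_{5,3}(0)$ --- all agree with the stated list.
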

	
%From theoretical point of view one of the final aims of studying any class of algebras is a classification of them up to isomorphism. Though in [] a classification of all two-dimensional algebras is given some researchers may be interested only in classification of two-dimensional algebras of their interest. In general they can derive it from that classification. But nowadays so many different classes of algebras are under investigation that deriving for all of them the corresponding classification from [] is
%Instead of it in this paper we list the most important identities which have appeared in definitions of different classes of algebras and classify all two-dimensional algebras which satisfy the given identity from the list. From such classifications one can get easily a classification of two-dimensional algebras which satisfy the given family of identities from the list.

Now we consider a set of the most important identities and classify all nontrivial two-dimensional algebras with respect to this set of identities. The set of identities mainly consists of identities which have appeared before in definitions of different classes of algebras and their ``anti'' variants. We tried not include both of an identity and its ``opposite'' versions into this set except for the case when an identity and its opposite are the same. Therefore, in calling identities the term ``Left'' is used. Of course, the use of terms ``Left'' and ``Anti'' seems to be a questionable, but we have tried to follow the tradition given in the literature earlier. From this classification one can easily derive the classification of two-dimensional algebras with a given subset of identities and due to Theorems \ref{Th41}, \ref{Th43} and \ref{Th45}, the classification of two-dimensional algebras satisfying the corresponding ``Right'' (``Opposite'') identities also can easily be obtained.

In the theorems below we give the polynomial identities denoted by $I_1, I_2, ... , I_{30}$ followed by the classification of two-dimensional algebras satisfying these identities in terms of canonical representatives $A_1 - A_{12}$.

The following notations are used:

$I$ stands for the second order identity matrix,

$i$ stands for a fixed element of $\mathbb{F}$ such that $i^2=-1$.

Elements of $\mathbb{A}$ and their position vectors are denoted by $\mathbf{u}, \mathbf{v}, \mathbf{w}$ and $u, v, w$, respectively.

\begin{thm} Let $Char(\mathbb{F})\neq 2,\ 3$. The following classification of two-dimensional algebras with respect to $I_1-I_{30}$ holds true:
\begin{itemize}	
 \item[$I_1$.] Commutativity identity $\mathbf{u}\mathbf{v}=\mathbf{v}\mathbf{u}$.

 $A_2(\alpha_1,\beta_1,1-\alpha_1)\cong A_2(\alpha_1,-\beta_1,1-\alpha_1), A_3(\beta_1,1), A_4(\alpha_1,1-\alpha_1),  A_5(\frac{2}{3}), A_{10}, A_{11}, A_{12}$.

  \item[$I_2$.] Anti-commutativity identity $\mathbf{u}\mathbf{v}=-\mathbf{v}\mathbf{u}$.
  $A_{4}(0,-1)$, $A_{12}$.

  \item[$I_3$.] Associativity identity $(\mathbf{u}\mathbf{v})\mathbf{w}=\mathbf{u}(\mathbf{v}\mathbf{w}).$

 $A_{2}\left(\frac{1}{2},0,\frac{1}{2}\right)$,
 $A_{4}\left(\frac{1}{2},\frac{1}{2}\right)$,
 $A_{4}(1,0),$
 $A_{4}\left(1,1\right),$ $A_{4}\left(\frac{1}{2},0\right)$,
  $A_{12}.$

   \item[$I_4$.] Anti-associativity identity $(\mathbf{u}\mathbf{v})\mathbf{w}=-\mathbf{u}(\mathbf{v}\mathbf{w}).$
   $A_{12}$.

  \item[$I_5$.] Well defined cube identity $\mathbf{u}^2\mathbf{u}=\mathbf{u}\mathbf{u}^2.$

 $A_{1}\left(\frac{1}{3},-\frac{1}{3},0,0\right),$ $A_{2}\left(\alpha _1,\beta _1,1-\alpha_1\right)
\cong A_{2}\left(\alpha _1,-\beta _1,1-\alpha_1\right),$
 $A_{3}\left(\beta _1,1\right),$
 $A_{4}\left(\alpha _1,1-\alpha_1\right)$,

 where $\alpha _1 \neq \frac{2}{3},$\ \
 $A_{4}\left(\alpha _1,2\alpha _1-1\right),$
 $A_{5}\left(\frac{2}{3}\right),$
 $A_{8}\left(\frac{1}{3}\right),$
 $A_{10},$
 $A_{11},$
  $A_{12}$.

   \item[$I_6$.] Half-commutativity identity $[\mathbf{u},\mathbf{v}]\mathbf{w}=\mathbf{w}[\mathbf{u},\mathbf{v}].$

 $A_2(\alpha_1,\beta_1,1-\alpha_1)\cong A_2(\alpha_1,-\beta_1,1-\alpha_1)$, $A_3(\beta_1,1)$, $A_4(\alpha_1,1-\alpha_1)$, $A_5(\frac{2}{3})$, $A_{10}$, $A_{11}$, $A_{12}$.

  \item[$I_7$.] Anti-half-commutativity identity $[\mathbf{u},\mathbf{v}]\mathbf{w}=-\mathbf{w}[\mathbf{u},\mathbf{v}].$

  $A_2(\alpha_1,\beta_1,1-\alpha_1)\cong A_2(\alpha_1,-\beta_1,1-\alpha_1)$, $A_3(\beta_1,1)$, $A_4(\alpha_1,1-\alpha_1)$, $A_4(\alpha_1,\alpha_1-1)$, $A_5(0)$, $A_5(\frac{2}{3})$, $A_{8}(\frac{1}{2})$, $A_{10}$, $A_{11}$, $A_{12}$.

   \item[$I_8$.] Mixed associativity identity $[\mathbf{u},\mathbf{v}]\mathbf{w}=\mathbf{u}[\mathbf{v},\mathbf{w}].$

  $A_2(\alpha_1,\beta_1,1-\alpha_1)\cong A_2(\alpha_1,-\beta_1,1-\alpha_1)$, $A_3(\beta_1,1)$, $ A_4(\alpha_1,1-\alpha_1)$, $A_5(\frac{2}{3})$, $A_{10}$, $A_{11}$, $A_{12}$.

    \item[$I_9$.] Anti-mixed-associativity identity $[\mathbf{u},\mathbf{v}]\mathbf{w}=-\mathbf{u}[\mathbf{v},\mathbf{w}].$

 $A_2(\alpha_1,\beta_1,1-\alpha_1)\cong A_2(\alpha_1,-\beta_1,1-\alpha_1)$, $A_3(\beta_1,1)$, $ A_4(\alpha_1,1-\alpha_1)$, $A_5(\frac{2}{3})$, $A_{10}$, $A_{11}$, $A_{12}$.

   \item[$I_{10}$.] Flexibility identity $\mathbf{u}(\mathbf{v}\mathbf{u})=(\mathbf{u}\mathbf{v})\mathbf{u}$.

 $A_2(\alpha_1,\beta_1,1-\alpha_1)\cong A_2(\alpha_1,-\beta_1,1-\alpha_1)$, $A_3(\beta_1,1)$, $ A_4(\alpha_1,1-\alpha_1)$, $A_4(\alpha_1,2\alpha_1-1),$

 where $\alpha_1\neq \frac{2}{3},$ $A_5(\frac{2}{3}),$ $A_8(\frac{1}{3}),$ $A_{10},$ $A_{11},$ $A_{12}$.

   \item[$I_{11}$.] Anti-flexibility identity $\mathbf{u}(\mathbf{v}\mathbf{u})=-(\mathbf{u}\mathbf{v})\mathbf{u}$.
 $A_{12}.$

   \item[$I_{12}$.] Mixed flexibility identity $\mathbf{u}[\mathbf{v},\mathbf{u}]=[\mathbf{u},\mathbf{v}]\mathbf{u}$.

   $A_2(\alpha_1,\beta_1,1-\alpha_1)\cong A_2(\alpha_1,-\beta_1,1-\alpha_1)$, $A_3(\beta_1,1)$, $ A_4(\alpha_1,1-\alpha_1)$, $A_4(\alpha_1,\alpha_1-1)$,

   $A_5(0)$, $A_5(\frac{2}{3})$, $A_8(\frac{1}{2})$, $ A_{10}$, $A_{11}$, $A_{12}$.

   \item[$I_{13}$.] Mixed anti-flexibility identity $\mathbf{u}[\mathbf{v},\mathbf{u}]=-[\mathbf{u},\mathbf{v}]\mathbf{u}$.

   $A_2(\alpha_1,\beta_1,1-\alpha_1)\cong A_2(\alpha_1,-\beta_1,1-\alpha_1)$, $A_3(\beta_1,1)$, $ A_4(\alpha_1,1-\alpha_1)$, $A_5(\frac{2}{3})$, $A_{10}$, $A_{11}$, $A_{12}$.

    \item[$I_{14}$.] Left Leibniz identity $\mathbf{u}(\mathbf{v}\mathbf{w})=(\mathbf{u}\mathbf{v})\mathbf{w}+\mathbf{v}(\mathbf{u}\mathbf{w})$.
       $A_4(0,-1), A_8(0), A_{12}$.

   \item[$I_{15}$.] Left anti-Leibniz identity $\mathbf{u}(\mathbf{v}\mathbf{w})=-(\mathbf{u}\mathbf{v})\mathbf{w}-\mathbf{v}(\mathbf{u}\mathbf{w})$.
       $ A_{12}$.

  \item[$I_{16}$.] Mixed left Leibniz identity $\mathbf{u}[\mathbf{v},\mathbf{w}]=[\mathbf{u},\mathbf{v}]\mathbf{w}+\mathbf{v}[\mathbf{u},\mathbf{w}]$.

  $A_2(\alpha_1,\beta_1,1-\alpha_1)\cong A_2(\alpha_1,-\beta_1,1-\alpha_1)$, $A_3(\beta_1,1)$, $ A_4(\alpha_1,1-\alpha_1)$, $A_4(\alpha_1,\alpha_1-1)$,

  $A_5(0)$, $A_5(\frac{2}{3})$, $A_8(\frac{1}{2})$, $ A_{10}$, $A_{11}$, $A_{12}$.

   \item[$I_{17}$.] Mixed anti-left Leibniz identity $\mathbf{u}[\mathbf{v},\mathbf{w}]=-[\mathbf{u},\mathbf{v}]\mathbf{w}-\mathbf{v}[\mathbf{u},\mathbf{w}]$.

   $A_2(\alpha_1,\beta_1,1-\alpha_1)\cong A_2(\alpha_1,-\beta_1,1-\alpha_1)$, $A_3(\beta_1,1)$, $ A_4(\alpha_1,1-\alpha_1)$, $A_5(\frac{2}{3})$, $A_{10}$, $A_{11}$, $A_{12}$.

    \item[$I_{18}$.] Left Poisson identity $(\mathbf{u}\mathbf{v})\mathbf{w}+(\mathbf{v}\mathbf{w})\mathbf{u}+(\mathbf{w}\mathbf{u})\mathbf{v}=0$.
         $ A_4(0,-1)$, $A_8(0)$, $A_{12}$.

     \item[$I_{19}$.] Left Jordan identity $(\mathbf{u}\mathbf{v})\mathbf{u}^2=\mathbf{u}(\mathbf{v}\mathbf{u}^2)$.

     \begin{itemize}
         \item If $Char(\mathbb{F})\neq 5$ then $A_2\left(\frac{1}{2},0,\frac{1}{2}\right),$
 $A_2\left(\frac{1}{2},0,-\frac{1}{2}\right),$
 $A_4(\alpha_1,-1+2\alpha_1),$ where $\alpha _1\neq\frac{1}{10} \left(5\pm\sqrt{5}\right),$\ \
 $A_4\left(\alpha_1,\sqrt{\alpha _1-\alpha _1^2}\right)$,
 $A_4\left(\alpha_1,-\sqrt{\alpha _1-\alpha _1^2}\right),$ where $\alpha _1\neq 0,1,$ $A_5\left(\frac{1}{10}
 \left(5-\sqrt{5}\right)\right),$ \\
  $A_5\left(\frac{1}{10} \left(5+\sqrt{5}\right)\right),$
 $A_8\left(\frac{1}{3}\right),$
  $A_8\left(\frac{1}{2}-\frac{i}{2}\right),$
 $A_8\left(\frac{1}{2}+\frac{i}{2}\right),$
 $A_{12}.$
 \item If $Char(\mathbb{F})= 5$ then $A_2\left(\frac{1}{2},0,\frac{1}{2}\right), A_2\left(\frac{1}{2},0,-\frac{1}{2}\right), A_4(\alpha_1,-1+2\alpha_1), A_4\left(\alpha_1,\sqrt{\alpha _1-\alpha _1^2}\right),$ $A_4\left(\alpha_1,-\sqrt{\alpha _1-\alpha _1^2}\right),$ where $\alpha _1\neq 0,1$, \ \ $A_8\left(\frac{1}{3}\right),$ $ A_8\left(\frac{3}{2}\right),$ $A_9,$ $A_{12}.$
\end{itemize}
   \item[$I_{20}$.] Left anti-Jordan identity $(\mathbf{u}\mathbf{v})\mathbf{u}^2=-\mathbf{u}(\mathbf{v}\mathbf{u}^2)$.
         $A_4(0,-1)$, $A_4(0,0)$, $A_{12}$.

  \item[$I_{21}$.] Mixed left Jordan identity $[\mathbf{u},\mathbf{v}]\mathbf{u}^2=\mathbf{u}[\mathbf{v},\mathbf{u}^2].$

  $A_2(\alpha_1,\beta_1,1-\alpha_1)\cong A_2(\alpha_1,-\beta_1,1-\alpha_1)$, $A_3(\beta_1,1)$, $A_4(0,-1)$,
  $A_4(0,0)$, $A_4(\alpha_1,1-\alpha_1),$

  $A_5(\frac{2}{3}),$ $A_{10},$ $A_{11},$ $A_{12}$.

    \item[$I_{22}$.] Mixed anti-left Jordan identity $[\mathbf{u},\mathbf{v}]\mathbf{u}^2=-\mathbf{u}[\mathbf{v},\mathbf{u}^2].$

    $A_2(\alpha_1,\beta_1,1-\alpha_1)\cong A_2(\alpha_1,-\beta_1,1-\alpha_1)$, $A_3(\beta_1,1)$, $A_4(0,-1),$ $A_4(0,0),$ $A_4(\alpha_1,1-\alpha_1),$

    $A_5(\frac{2}{3}),$ $A_{10},$ $A_{11},$ $A_{12}$.

       \item[$I_{23}$.] Left Malcev identity $((\mathbf{u}\mathbf{v})\mathbf{w}+(\mathbf{v}\mathbf{w})\mathbf{u}
           +(\mathbf{w}\mathbf{u})\mathbf{v})\mathbf{u}=(\mathbf{u}\mathbf{v})(\mathbf{u}\mathbf{w})
           +(\mathbf{v}(\mathbf{u}\mathbf{w}))\mathbf{u} +((\mathbf{u}\mathbf{w})\mathbf{u})\mathbf{v}$.

  $A_2(\frac{1}{2},0,\frac{1}{2})$, $A_4(0,-1)$, $A_4(\frac{1}{2},\frac{1}{2})$, $A_4(1,0)$, $A_8(0)$, $A_{12}$.

    \item[$I_{24}$.] Left anti-Malcev identity $((\mathbf{u}\mathbf{v})\mathbf{w}+(\mathbf{v}\mathbf{w})\mathbf{u}
        +(\mathbf{w}\mathbf{u})\mathbf{v})\mathbf{u}=-(\mathbf{u}\mathbf{v})(\mathbf{u}\mathbf{w})
        -(\mathbf{v}(\mathbf{u}\mathbf{w}))\mathbf{u} -((\mathbf{u}\mathbf{w})\mathbf{u})\mathbf{v}$.

          $ A_4(0,-1)$, $A_8(0)$, $A_{12}$.

    \item[$I_{25}$.] Left Zinbiel identity $ (\mathbf{u}\mathbf{v})\mathbf{w}=\mathbf{u}(\mathbf{v}\mathbf{w}+\mathbf{w}\mathbf{v})$.
      $ A_{12}$.

     \item[$I_{26}$.] Left anti-Zinbiel identity $ (\mathbf{u}\mathbf{v})\mathbf{w}=-\mathbf{u}(\mathbf{v}\mathbf{w}+\mathbf{w}\mathbf{v})$.
      $ A_{12}$.

  \item[$I_{27}$.] Left symmetric identity $[\mathbf{u},\mathbf{v},\mathbf{w}]=[\mathbf{v},\mathbf{u},\mathbf{w}]$.

 $A_2(\frac{1}{2},0,\frac{1}{2})$, $A_2(1,0,\frac{1}{2})$, $A_4(1,\beta_2)$, $A_4(\frac{1}{2},\beta_2)$, $A_5(1)$, $ A_5(\frac{1}{2})$, $A_8(0)$, $A_{12}$.

   \item[$I_{28}$.] Left anti-symmetric identity $[\mathbf{u},\mathbf{v},\mathbf{w}]=-[\mathbf{v},\mathbf{u},\mathbf{w}]$.

         $A_2(\frac{1}{2},0,\frac{1}{2})$, $A_4(\frac{1}{2},\frac{1}{2})$, $A_4(\frac{1}{2},0)$, $A_4(1,0)$, $A_4(1,1)$, $A_{12}$.

   \item[$I_{29}$.] Centro-symmetric identity $[\mathbf{u},\mathbf{v},\mathbf{w}]=[\mathbf{w},\mathbf{v},\mathbf{u}]$.

       $A_1(\alpha_1,\alpha_2, -\alpha_1-2\alpha_2,2\alpha_1+\alpha_2),$ where $\alpha_2= \frac{1}{8} \left( \pm \sqrt{32 \alpha_1-15}-8 \alpha_1-1\right)$,
        $A_2\left(\frac{1}{2},0,\frac{1}{2}\right)$,\\
         $A_4\left(\alpha_1,\frac{\alpha_1-\sqrt{12\alpha_1-7\alpha_1^2-4}}{2}\right)$,\\ $A_4\left(\alpha_1,\frac{\alpha_1+\sqrt{12\alpha_1-7\alpha_1^2-4}}{2}\right)$, $A_5\left(\frac{1}{2}\right)$, $A_5(1)$, $A_8\left(\frac{3-\sqrt{7}i}{8}\right)$, $A_8\left(\frac{3+\sqrt{7}i}{8}\right)$, $A_{12}$.

        \item[$I_{30}$.] Centro-anti-symmetric identity $[\mathbf{u},\mathbf{v},\mathbf{w}]=-[\mathbf{w},\mathbf{v},\mathbf{u}]$.

  $A_2(\alpha_1,\beta_1,1-\alpha_1)$, $A_3(\beta_1,1)$, $A_4(\alpha_1,1-\alpha_1)$, $A_4(\alpha_1,2\alpha_1-1)$, $A_5(\frac{2}{3})$, $A_8(\frac{1}{3})$, $A_{10}$, $A_{11}$, $A_{12}$.
  \end{itemize}
 \end{thm}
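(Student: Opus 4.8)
The plan is to convert each identity $I_k$ into a finite system of polynomial equations in the structure constants and then solve that system separately on each of the twelve canonical families $A_1(\mathbf{c}),\dots,A_{12}(\mathbf{c})$ of \cite{B2}. Concretely, for a generic two-dimensional algebra with MSC $\left(\begin{smallmatrix}\alpha_1&\alpha_2&\alpha_3&\alpha_4\\ \beta_1&\beta_2&\beta_3&\beta_4\end{smallmatrix}\right)$ and generic elements $\mathbf{u}=x_1e_1+x_2e_2$, $\mathbf{v}=y_1e_1+y_2e_2$, $\mathbf{w}=z_1e_1+z_2e_2$, every product occurring in $I_k$ is computed from $e_ie_j=\alpha_{\bullet}e_1+\beta_{\bullet}e_2$, and the difference (left side minus right side) of $I_k$ is expanded as an element of $\mathbb{A}$ whose two coordinates are polynomials in the $x_i,y_i,z_i$. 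The identity holds in $\mathbb{A}$ precisely when every monomial coefficient in both coordinates vanishes, which is a finite polynomial system in $(\alpha_i,\beta_i)$. Since satisfying a polynomial identity is an isomorphism invariant and \cite{B2} provides a full transversal of isomorphism classes, it suffices to impose these conditions on the special shape of each $A_j(\mathbf{c})$; this produces a much smaller system in the parameters $\mathbf{c}$, whose solution set — read back through the internal identifications already recorded in \cite{B2}, e.g. $A_2(\alpha_1,\beta_1,\beta_2)\cong A_2(\alpha_1,-\beta_1,\beta_2)$ — is exactly the sublist of canonical algebras listed for $I_k$.

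Two reductions keep the bookkeeping manageable. For the multilinear identities $I_1$--$I_4$, $I_6$--$I_9$, $I_{14}$--$I_{18}$, $I_{25}$--$I_{30}$ each variable occurs once, so it is enough to test $I_k$ on arguments drawn from $\{e_1,e_2\}$, which mechanizes the extraction of the conditions. For the non-multilinear identities $I_5$, $I_{10}$--$I_{13}$, $I_{19}$--$I_{24}$ (which contain repeated arguments such as $\mathbf{u}^2\mathbf{u}$, $\mathbf{u}(\mathbf{v}\mathbf{u})$, $(\mathbf{u}\mathbf{v})\mathbf{u}^2$) one works directly with generic $\mathbf{u}$ and $\mathbf{v}$, reading off the coefficients of the relevant monomials; since $\mathrm{Char}(\mathbb{F})\neq 2,3$ and every $I_k$ has degree at most four, no information is lost this way. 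It is also convenient to exploit the structural fact, specialised from the theorem in Section 3, that in a two-dimensional algebra $[\mathbf{u},\mathbf{v}]=|\mathbf{u},\mathbf{v}|\,\mathbf{u}_0$ for a fixed $\mathbf{u}_0$ (and for $A_4,A_5,A_8,A_9$ one even has $\mathbf{u}_0\in\mathbb{F}e_2$ with $e_2^2=0$), which collapses the commutator-bearing identities $I_6$--$I_9$, $I_{12}$, $I_{13}$, $I_{16}$, $I_{17}$, $I_{21}$, $I_{22}$, $I_{30}$ considerably; and the a priori identities (\ref{ID}) serve throughout as a consistency check, since every trilinear identity valid in all two-dimensional algebras reduces to a combination of them.

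Several genuinely delicate points must be tracked while solving the parametrised systems. First, radicals: several systems impose a quadratic condition on a parameter whose roots involve $\sqrt{5}$, $\sqrt{\alpha_1-\alpha_1^2}$, $\sqrt{12\alpha_1-7\alpha_1^2-4}$, $\sqrt{32\alpha_1-15}$, or the element $i$ with $i^2=-1$, so the answer has to be phrased in terms of these radicals, consistently with the standing field hypothesis of \cite{B2}. Second, characteristic: although the whole analysis runs under $\mathrm{Char}(\mathbb{F})\neq 2,3$, the value $\mathrm{Char}(\mathbb{F})=5$ is genuinely exceptional for the Jordan-type identity $I_{19}$ (the $\tfrac{1}{10}(5\pm\sqrt{5})$ branch degenerates there), so that case is listed separately. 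Third, overlap loci: parametrised families such as $A_4(\alpha_1,2\alpha_1-1)$ and $A_2(\alpha_1,\beta_1,1-\alpha_1)$ meet the sporadic algebras $A_5(\tfrac{2}{3})$, $A_8(\tfrac{1}{3})$, $A_9$ at particular parameter values, forcing the ``where $\alpha_1\neq\frac{2}{3}$''-type exclusions together with separate mention of the sporadic members. Finally, once the ``Left'' forms are settled, Theorems \ref{Th41}, \ref{Th43} and \ref{Th45} transfer each result to the corresponding ``Right''/``opposite'' identity, so nothing further need be done for those.

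The main obstacle is not conceptual but the volume and fragility of the computation: thirty identities against twelve parametrised families amounts to several hundred polynomial systems, many with free parameters and with solution sets that are unions of curves, isolated points, and radical-defined loci, and a single mis-extracted monomial coefficient corrupts an entry of the final list. In practice the coefficient extraction and the solution of these systems are carried out with the aid of computer algebra, and the output is cross-checked against the opposite-algebra symmetry of Theorems \ref{Th41}--\ref{Th45} and against the universal identities (\ref{ID}) satisfied by every two-dimensional algebra.
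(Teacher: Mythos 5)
Your proposal is correct and follows essentially the same strategy as the paper: each identity is rewritten as a polynomial system in the structure constants (the paper encodes this via matrix equations such as $A(A\otimes I)=A(I\otimes A)$, which is the same coefficient extraction you describe), the system is then specialised to each canonical representative $A_1$--$A_{12}$ and solved by a mix of hand and computer computation, with the same attention to radicals, the exceptional characteristic $5$ in $I_{19}$, and overlaps between parametrised families. The only cosmetic difference is your suggestion to shortcut the commutator-bearing identities via $[\mathbf{u},\mathbf{v}]=|\mathbf{u},\mathbf{v}|\,\mathbf{u}_0$, whereas the paper simply writes out the full systems.
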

 \begin{proof}
Proofs of the cases $I_1-I_{30}$ are similar. The basic idea behind the proof of the theorem is just to convert the given identity in terms of MSC $A$ as a matrix equation and solve the corresponding system of polynomials equations
 % , which is component-wise presentation of that matrix equation,
  for each of $A_1-A_{12}$ canonical representatives. Here we provide the systems of equations. The solutions to them are a bit technical, but once we substitute the structure constants into the system it is much simplified and can be managed combining manual and computer calculations. We illustrate the technique in two specific cases (the cases $I_{18}$ and $I_{23}$).
 % Since the idea is clear we are not going to present either all those systems of polynomial equations or detailed solutions of them as far as they would take a space. For each of $I_1-I_{30}$ cases we provide the corresponding matrix equation the solution of them is a bit technical, those who is interested in the details we refer to the arXive version of the paper{\footnote{???}}.
  %and the corresponding system of equations in a compact form for general case of $A$. The interested reader can obtain the needed systems of equations from it for $A_1-A_{12}$ cases .

 The cases of commutativity and anticommutativity are clear where the identities are equivalent to the fact that the second and third columns of $A$ to be the same and differ in sign as vectors, respectively.
 % $I_2$-case. Proof is evident as far as the anti-commutativity means that the second and third columns of $A$ differ in sign as vectors.
The proof of the cases $I_5$ and $I_{19}$ are given in \cite{B4} and \cite{B3}, respectively.
%case. $A(A\otimes I)=A(I\otimes A)$. It can be easily derived from the main result of \cite{}.

The identities $I_3$ and $I_4$ in terms of matrices are expressed by $A(A\otimes I)=A(I\otimes A)$ and $A(A\otimes I)=-A(I\otimes A),$ respectively.\\

 %$I_5$ case.$(A(A\otimes I)-A(I\otimes A))u^{\otimes 3}=0$. It is given in \cite{}.
%For the other identities we give their anologues as matrix equations as follows:
\begin{itemize}
 \item $I_6$ case. $ A(A\otimes I)((u\otimes v-v\otimes u)\otimes w)=A(I\otimes A)(w\otimes(u\otimes v-v\otimes u))$.\\
  The corresponding system is given as below

   $\begin{array}{rr}
 	\alpha _2 \beta _2-\alpha _3 \beta _2-\alpha _2 \beta _3+\alpha _3 \beta _3=0,&\\
 	\alpha _2^2-2 \alpha _3 \alpha _2+\alpha _3^2=0,&\\
 	\beta _2^2-2 \beta _3 \beta _2+\beta _3^2 =0.&\end{array}$.
 	
 \item $I_7$ case. $A(A\otimes I)((u\otimes v-v\otimes u)\otimes w)=-A(I\otimes A)(w\otimes(u\otimes v-v\otimes u)).$\\
 In this case the system is as follows

  $\begin{array}{rr}
 2 \alpha_1 \alpha _2+\beta _2 \alpha _2-\beta _3 \alpha _2-2 \alpha _1 \alpha _3+\alpha _3 \beta _2-\alpha _3 \beta _3=0,&\\
 \alpha _2 \beta _2-\alpha _3 \beta _2+2 \beta _4 \beta _2+\alpha _2 \beta _3-\alpha _3 \beta _3-2 \beta _3 \beta _4 =0,&\\
 \alpha _2^2-\alpha _3^2+2 \alpha _4 \beta _2-2 \alpha _4 \beta _3 =0,&\\
 \beta _2^2-\beta _3^2+2 \alpha _2 \beta _1-2 \alpha _3 \beta _1 =0.&
 \end{array}$
  \item $I_8$ case. $ A(A\otimes I)((u\otimes v-v\otimes u)\otimes w)=A(I\otimes A)(u\otimes(v\otimes w-w\otimes v))$.\\
   The identity is written in terms of structure constants as follows

  $\begin{array}{rr}
  2 \alpha _1 \alpha _2+\beta _2 \alpha _2-\beta _3 \alpha _2-2 \alpha _1 \alpha _3+\alpha _3 \beta _2-\alpha _3 \beta _3 =0,&\\
  \alpha _2 \beta _2-\alpha _3 \beta _2+2 \beta _4 \beta _2+\alpha _2 \beta _3-\alpha _3 \beta _3-2 \beta _3 \beta _4=0, &\\
 \alpha _1 \alpha _2+\beta _2 \alpha _2-\beta _3 \alpha _2-\alpha _1 \alpha _3=0,&\\
  \alpha _2^2-\alpha _3 \alpha _2+\alpha _4 \beta _2-\alpha _4 \beta _3 =0,&\\
 \alpha _1 \alpha _2-\alpha _1 \alpha _3+\alpha _3 \beta _2-\alpha _3 \beta _3 =0,&\\
 \alpha _2^2-\alpha _3^2+2 \alpha _4 \beta _2-2 \alpha _4 \beta _3 =0,&\\
 \alpha _3^2-\alpha _2 \alpha _3-\alpha _4 \beta _2+\alpha _4 \beta _3 =0,&\\
 \beta _2^2-\beta _3 \beta _2-\alpha _2 \beta _1-\alpha _3 \beta _1 =0,&\\
 \beta _2^2-\beta _3^2+2 \alpha _2 \beta _1-2 \alpha _3 \beta _1=0,&\\
 \alpha _2 \beta _2-\alpha _3 \beta _2+\beta _4 \beta _2-\beta _3 \beta _4=0, &\\
 \beta _3^2-\beta _2 \beta _3-\alpha _2 \beta _1+\alpha _3 \beta _1 =0,&\\
  \alpha _2 \beta _3-\alpha _3 \beta _3-\beta _4 \beta _3+\beta _2 \beta _4 =0.&
 \end{array}$

 \item $I_9$ case. $ A(A\otimes I)((u\otimes v-v\otimes u)\otimes w)=-A(I\otimes A)(u\otimes(v\otimes w-w\otimes v))$.\\
 Here is the corresponding system of equations
 \begin{center}
  $\begin{array}{rr}
 \alpha _1 \alpha _2+\beta _2 \alpha _2-\beta _3 \alpha _2-\alpha _1 \alpha _3 =0,&
 \alpha _2 \beta _2-\alpha _3 \beta _2-\alpha _2 \beta _3+\alpha _3 \beta _3 =0,\\
 \alpha _2^2-\alpha _3 \alpha _2+\alpha _4 \beta _2-\alpha _4 \beta _3 =0,&
 \alpha _1 \alpha _2-\alpha _1 \alpha _3+\alpha _3 \beta _2-\alpha _3 \beta _3=0, \\
 \beta _2^2-\beta _3 \beta _2+\alpha _2 \beta _1-\alpha _3 \beta _1 =0,&

 \alpha _3^2-\alpha _2 \alpha _3-\alpha _4 \beta _2+\alpha _4 \beta _3=0, \\

%  \end{array}$
%\end{center}
% \begin{center}
%  $\begin{array}{rr}
 \alpha _2 \beta _2-\alpha _3 \beta _2+\beta _4 \beta _2-\beta _3 \beta _4 =0,&
 \beta _3^2-\beta _2 \beta _3-\alpha _2 \beta _1+\alpha _3 \beta _1 =0,\\

%   \end{array}$
%\end{center}
% \begin{center}
%  $\begin{array}{rr}
 \alpha _2 \beta _2-\alpha _3 \beta _2-\alpha _2 \beta _3+\alpha _3 \beta _3 =0,&
 %  \end{array}$
%\end{center}
% \begin{center}
%  $\begin{array}{rr}
 \alpha _2 \beta _3-\alpha _3 \beta _3-\beta _4 \beta _3+\beta _2 \beta _4 =0,\\
 \alpha _2^2-2 \alpha _3 \alpha _2+\alpha _3^2 =0,&
 \beta _2^2+2 \beta _3 \beta _2+\beta _3^2 =0.
 \end{array}$
\end{center}
 \item $I_{10}$ case. $(A(I\otimes A)-A(A\otimes
 I))(u\otimes v\otimes u)=0$.

 The system of equations is\\ $\begin{array}{rr}
 \alpha _3 \beta _2-\beta _4 \beta _2-\alpha _2 \beta _3+\beta _3 \beta _4 =0,
  &
 \beta _2^2-\alpha _1 \beta _2-\beta _3^2+\alpha _2 \beta _1-\alpha _3 \beta _1+\alpha _1 \beta _3=0,\\
 \alpha _2 \beta _2-\alpha _3 \beta _3 =0,&
  \alpha _2^2-\beta _4 \alpha _2-\alpha _3^2+\alpha _4 \beta _2-\alpha _4 \beta _3+\alpha _3 \beta _4 =0,\\
 \alpha _4 \beta _2-\alpha _4 \beta _3=0,&

 \alpha _1 \alpha _2-\beta _3 \alpha _2-\alpha _1 \alpha _3+\alpha _3 \beta _2=0,\\
 \beta _1 \beta _2-\beta _1 \beta _3 =0,& \alpha _3 \alpha _4-\alpha _2 \alpha _4 =0
 , \\
 \alpha _2 \beta _1-\alpha _3 \beta _1=0.&
 \end{array}$

 \item $I_{11}$ case. $(A(I\otimes A)+A(A\otimes
 I))(u\otimes v\otimes u)=0$.

 In this case the system of equations is given as follows \\ $\begin{array}{rr}
 \alpha _2^2+\beta _4 \alpha _2+\alpha _3^2+2 \alpha _1 \alpha _4+\alpha _4 \beta _2+\alpha _4 \beta _3+\alpha _3 \beta _4=0,& \\
 \beta _2^2+\alpha _1 \beta _2+\beta _3^2+\alpha _2 \beta _1+\alpha _3 \beta _1+\alpha _1 \beta _3+2 \beta _1 \beta _4 =0,&\\
 2 \alpha _1 \alpha _2+\beta _2 \alpha _2+2 \alpha _1 \alpha _3+2 \alpha _4 \beta _1+\alpha _3 \beta _3 =0,&\\
 2 \alpha _4 \beta _1+\alpha _2 \beta _2+\alpha _3 \beta _3+2 \beta _2 \beta _4+2 \beta _3 \beta _4 =0,&\\
 \alpha _1 \alpha _2+\beta _3 \alpha _2+\alpha _1 \alpha _3+\alpha _3 \beta _2 =0,&\\
 \alpha _3 \beta _2+\beta _4 \beta _2+\alpha _2 \beta _3+\beta _3 \beta _4 =0,&\\

 2 \alpha _2 \alpha _3+\alpha _4 \beta _2+\alpha _4 \beta _3=0, &\\
 \alpha _2 \alpha _4+\alpha _3 \alpha _4+2 \beta _4 \alpha _4 =0,&\\
 2 \alpha _1 \beta _1+\beta _2 \beta _1+\beta _3 \beta _1 =0, &\\
 \alpha _2 \beta _1+\alpha _3 \beta _1+2 \beta _2 \beta _3 =0,&\\

 2 \alpha _1^2+\alpha _2 \beta _1+\alpha _3 \beta _1 =0,&\\
 2 \beta _4^2+\alpha _4 \beta _2+\alpha _4 \beta _3 =0.&
 \end{array}.$

 \item $I_{12}$ case. $ A(A\otimes I)((u\otimes v-v\otimes u)\otimes u)=A(I\otimes A)(u\otimes(v\otimes u-u\otimes v)).$

  The identity can be written in terms of structure constant as the following system of equations\\ $\begin{array}{rr}
 2 \alpha _1 \alpha _2+\beta _2 \alpha _2-\beta _3 \alpha _2-2 \alpha _1 \alpha _3+\alpha _3 \beta _2-\alpha _3 \beta _3 =0,&\\
 \alpha _2 \beta _2-\alpha _3 \beta _2+2 \beta _4 \beta _2+\alpha _2 \beta _3-\alpha _3 \beta _3-2 \beta _3 \beta _4 =0.& \\
 \alpha _2^2-\alpha _3^2+2 \alpha _4 \beta _2-2 \alpha _4 \beta _3 =0,&\\
 \beta _2^2-\beta _3^2+2 \alpha _2 \beta _1-2 \alpha _3 \beta _1 =0,&

 \end{array}$

 \item $I_{13}$ case. $ A(A\otimes I)((u\otimes v-v\otimes u)\otimes u)=-A(I\otimes A)(u\otimes(v\otimes u-u\otimes v)).$\\
      The system of equations is\\  $\begin{array}{rrr}
  \alpha _2 \beta _2-\alpha _3 \beta _2-\alpha _2 \beta _3+\alpha _3 \beta _3=0, &
  \alpha _2^2-2 \alpha _3 \alpha _2+\alpha _3^2 =0,&
    \beta _2^2-2 \beta _3 \beta _2+\beta _3^2 =0.
   % \alpha _2 \beta _2-\alpha _3 \beta _2-\alpha _2 \beta _3+\alpha _3 \beta _3 =0

  \end{array}$

  %$I_{13}$ case. $ A(A\otimes I)((u\otimes v-v\otimes u)\otimes u)=-A(I\otimes A)(u\otimes(v\otimes u-u\otimes v)).$ The system is\\
%
%  $\begin{array}{rr}
%  -\alpha _1^2-\alpha _3 \beta _1 =0,&
%  -\alpha _1 \alpha _2-\alpha _4 \beta _1 =0,\\
%  -\alpha _1 \alpha _2+\beta _3 \alpha _2-\alpha _4 \beta _1-\alpha _3 \beta _2 =0,&
%  -\alpha _2^2-\alpha _3 \alpha _2+\beta _4 \alpha _2+\alpha _1 \alpha _4-2 \alpha _4 \beta _2 =0,\\
%  -\alpha _1 \alpha _3-\beta _3 \alpha _3+\alpha _4 \beta _1-\alpha _2 \beta _3 =0,&
%  -\alpha _1 \alpha _4+\beta _2 \alpha _4-\beta _3 \alpha _4-\alpha _2 \beta _4 =0,\\
%  -\alpha _1 \alpha _4-\alpha _3 \beta _4 =0,&
%  -\alpha _2 \alpha _4-\beta _4 \alpha _4 =0,\\
%  -\alpha _1 \beta _1-\beta _3 \beta _1 =0,&
%  -\alpha _1 \beta _2-\beta _1 \beta _4=0, \\
%  -\alpha _2 \beta _1+\alpha _3 \beta _1-\beta _4 \beta _1-\alpha _1 \beta _3 =0,&
%  \alpha _4 \beta _1-\alpha _2 \beta _2-\alpha _2 \beta _3-\beta _2 \beta _4 =0,\\
%  -\beta _3^2+\alpha _1 \beta _3-\beta _2 \beta _3-2 \alpha _3 \beta _1+\beta _1 \beta _4 =0,&
%  -\alpha _4 \beta _1-\alpha _3 \beta _2+\alpha _2 \beta _3-\beta _3 \beta _4=0, \\
%  -\alpha _4 \beta _1-\beta _3 \beta _4 =0,&
%  -\beta _4^2-\alpha _4 \beta _2 =0.
% \end{array}$.

 \item $I_{14}$ case. $(A(I\otimes A)-A(A\otimes I))(u\otimes v\otimes w)=A(I\otimes A)(v\otimes u\otimes w).$\\
 %t
 Here is the corresponding system of equations\\
 $\begin{array}{rr}
 \beta _3^2-\alpha _1 \beta _3+\beta _2 \beta _3+2 \alpha _3 \beta _1-\beta _1 \beta _4 =0
 	,&
\alpha _1^2+\alpha _3 \beta _1 =0,\\
\alpha _2^2+\alpha _3 \alpha _2-\beta _4 \alpha _2-\alpha _1 \alpha _4+2 \alpha _4 \beta _2 =0
 	,&

 	\alpha _4 \beta _1+\beta _3 \beta _4 =0,\\
 	\alpha _1 \alpha _3+\beta _3 \alpha _3-\alpha _4 \beta _1+\alpha _2 \beta _3 =0,&

 	\alpha _1 \beta _1+\beta _3 \beta _1 =0,\\

 	\alpha _1 \alpha _2-\beta _3 \alpha _2+\alpha _4 \beta _1+\alpha _3 \beta _2 =0

 ,&
 	\alpha _2 \alpha _4+\beta _4 \alpha _4 =0,\\
 \alpha _2 \beta _1-\alpha _3 \beta _1+\beta _4 \beta _1+\alpha _1 \beta _3 =0
 	,&
 	\alpha _1 \beta _2+\beta _1 \beta _4 =0, \\
 \alpha _4 \beta _1-\alpha _2 \beta _2-\alpha _2 \beta _3-\beta _2 \beta _4 =0
 	,&
 	\alpha _1 \alpha _2+\alpha _4 \beta _1 =0,\\
 \alpha _4 \beta _1+\alpha _3 \beta _2-\alpha _2 \beta _3+\beta _3 \beta _4 =0
 	,&
 	\alpha _1 \alpha _4+\alpha _3 \beta _4 =0,\\
 \alpha _1 \alpha _4-\beta _2 \alpha _4+\beta _3 \alpha _4+\alpha _2 \beta _4 =0
 	,&
 	\beta _4^2+\alpha _4 \beta _2 =0.
 \end{array}$

 \item $I_{15}$ case. $(A(I\otimes A)+A(A\otimes I))(u\otimes v\otimes w)=-A(I\otimes A)(v\otimes u\otimes w).$\\
 The system of equations is given below\\
 $\begin{array}{rr}
 	\alpha _1 \alpha _2+\beta _3 \alpha _2+2 \alpha _1 \alpha _3+\alpha _4 \beta _1+\alpha _3 \beta _2 =0
 , &
 2 \alpha _2 \alpha _3+\alpha _1 \alpha _4+\alpha _4 \beta _2+\alpha _4 \beta _3+\alpha _2 \beta _4 =0

  ,\\
 \alpha _2 \beta _1+\alpha _3 \beta _1+\beta _4 \beta _1+\alpha _1 \beta _3+2 \beta _2 \beta _3 =0

 	, & \alpha _2^2+\alpha _3 \alpha _2+\beta _4 \alpha _2+\alpha _1 \alpha _4+2 \alpha _4 \beta _2 =0	
 	,\\
 \beta _3^2+\alpha _1 \beta _3+\beta _2 \beta _3+2 \alpha _3 \beta _1+\beta _1 \beta _4 =0
 	,& \alpha _4 \beta _1+\alpha _3 \beta _2+\alpha _2 \beta _3+2 \beta _2 \beta _4+\beta _3 \beta _4=0
 	,\\
 	3 \alpha _1 \alpha _3+\beta _3 \alpha _3+\alpha _4 \beta _1+\alpha _2 \beta _3 =0

 , &
 	\alpha _2 \alpha _4+2 \alpha _3 \alpha _4+3 \beta _4 \alpha _4 =0,\\
 	2 \alpha _3^2+\beta _4 \alpha _3+\alpha _1 \alpha _4+2 \alpha _4 \beta _3 =0

 , &
 	2 \beta _2^2+\alpha _1 \beta _2+2 \alpha _2 \beta _1+\beta _1 \beta _4 =0,\\
 	3 \alpha _1^2+2 \alpha _2 \beta _1+\alpha _3 \beta _1=0, &
 	\alpha _4 \beta _1+\alpha _2 \beta _2+\alpha _2 \beta _3+3 \beta _2 \beta _4 =0,\\
 	3 \alpha _1 \beta _1+2 \beta _2 \beta _1+\beta _3 \beta _1 =0,&
 	3 \alpha _1 \alpha _2+2 \beta _2 \alpha _2+\alpha _4 \beta _1 =0, \\
 	\alpha _4 \beta _1+2 \alpha _3 \beta _3+3 \beta _3 \beta _4 =0,&
 	3 \beta _4^2+\alpha _4 \beta _2+2 \alpha _4 \beta _3 =0.
 \end{array}$

 \item $I_{16}$ case. $A(I\otimes A)(u\otimes (v\otimes w-w\otimes v))=A(A\otimes I)((u\otimes v-v\otimes u)\otimes w)+A(I\otimes A)(v\otimes (u\otimes w-w\otimes u)).$
 The system of equations is\\
 $\begin{array}{rr}
 	2 \alpha _1 \alpha _2+\beta _2 \alpha _2-\beta _3 \alpha _2-2 \alpha _1 \alpha _3+\alpha _3 \beta _2-\alpha _3 \beta _3 =0,&\\
 \alpha _2 \beta _2-\alpha _3 \beta _2+2 \beta _4 \beta _2+\alpha _2 \beta _3-\alpha _3 \beta _3-2 \beta _3 \beta _4 =0,&\\
 	\alpha _2^2-\alpha _3^2+2 \alpha _4 \beta _2-2 \alpha _4 \beta _3 =0,&\\
 	\beta _2^2-\beta _3^2+2 \alpha _2 \beta _1-2 \alpha _3 \beta _1 =0.&
 \end{array}$

 \item $I_{17}$ case. $A(I\otimes A)(u\otimes (v\otimes w-w\otimes v))$

\hfill $=-A(A\otimes I)((u\otimes v-v\otimes u)\otimes w)-A(I\otimes A)(v\otimes (u\otimes w-w\otimes u)).$\\
 In this case we have the system of equations\\
 $\begin{array}{rr}
 \alpha _2 \beta _2-\alpha _3 \beta _2+2 \beta _4 \beta _2+\alpha _2 \beta _3-\alpha _3 \beta _3-2 \beta _3 \beta _4 =0
 	,&
 	\alpha _2 \beta _2-\alpha _3 \beta _2-\alpha _2 \beta _3+\alpha _3 \beta _3 =0,\\
 2 \alpha _1 \alpha _2+\beta _2 \alpha _2-\beta _3 \alpha _2-2 \alpha _1 \alpha _3+\alpha _3 \beta _2-\alpha _3 \beta _3 =0
 	,&
 	\alpha _2^2-\alpha _3^2+2 \alpha _4 \beta _2-2 \alpha _4 \beta _3 =0 ,\\
  \alpha _3^2- \alpha _2 \alpha _3- \alpha _4 \beta _2+ \alpha _4 \beta _3 =0,&
 \beta _2^2-\beta _3^2+2 \alpha _2 \beta _1-2 \alpha _3 \beta _1=0
 ,\\
 %\end{array}$\\
% 	$\begin{array}{rr}
 	 \beta _2^2- \beta _3 \beta _2+ \alpha _2 \beta _1- \alpha _3 \beta _1 =0,&
 	\beta _2^2-2 \beta _3 \beta _2+\beta _3^2 =0,\\
 	 \alpha _1 \alpha _2+ \beta _2 \alpha _2- \beta _3 \alpha _2- \alpha _1 \alpha _3 =0,&
 	
 	\alpha _2^2-2 \alpha _3 \alpha _2+\alpha _3^2 =0,\\
 	 \alpha _2 \beta _3- \alpha _3 \beta _3- \beta _4 \beta _3+ \beta _2 \beta _4 =0.&
 \end{array}$
 \item $I_{18}$ case. The left Poisson identity $(\mathbf{u}\mathbf{v})\mathbf{w}+(\mathbf{v}\mathbf{w})\mathbf{u}+(\mathbf{w}\mathbf{u})\mathbf{v}=0$ is equivalent to $ A(A\otimes I)(u\otimes v\otimes w+v\otimes w\otimes u+w\otimes u\otimes v)=0$ that produces the system of equations

 %$\begin{array}{rr}
% 	2 \alpha _1 \alpha _2+\alpha _1 \alpha _3+\alpha _4 \beta _1+\alpha _3 \beta _2+\alpha _3 \beta _3 =0, &
% 	 \alpha _1^2+ \alpha _3 \beta _1 =0,\\
% 	\alpha _2^2+\alpha _3 \alpha _2+\alpha _1 \alpha _4+\alpha _4 \beta _2+\alpha _4 \beta _3+\alpha _3 \beta _4 =0,&
% 	 \alpha _2 \alpha _4+ \beta _4 \alpha _4=0, \\
% \beta _3^2+\beta _2 \beta _3+\alpha _2 \beta _1+\alpha _3 \beta _1+\alpha _1 \beta _2+\beta _1 \beta _4 =0
% 	,&
% 	 \alpha _1 \beta _1+ \beta _3 \beta _1 =0,\\
% 	\alpha _4 \beta _1+\alpha _2 \beta _2+\alpha _3 \beta _2+\beta _2 \beta _4+2 \beta _3 \beta _4 =0,&
% 	 \beta _4^2+ \alpha _4 \beta _2 =0.
% \end{array}$

\begin{equation}\label{SELP}\begin{array}{rr}
 %3 \alpha _1^2+3 \alpha _3 \beta _1&=0, \\
 2 \alpha _1 \alpha _2+\alpha _1 \alpha _3+\alpha _4 \beta _1+\alpha _3 \beta _2+\alpha _3 \beta _3 &=0,\\
 \alpha _2^2+\alpha _3 \alpha _2+\alpha _1 \alpha _4+\alpha _4 \beta _2+\alpha _4 \beta _3+\alpha _3 \beta _4 &=0,\\
 %3 \alpha _2 \alpha _4+3 \beta _4 \alpha _4&=0, \\
% 3 \alpha _1 \beta _1+3 \beta _3 \beta _1 &=0,\\
 \beta _3^2+\beta _2 \beta _3+\alpha _2 \beta _1+\alpha _3 \beta _1+\alpha _1 \beta _2+\beta _1 \beta _4 &=0,\\
 \alpha _4 \beta _1+\alpha _2 \beta _2+\alpha _3 \beta _2+\beta _2 \beta _4+2 \beta _3 \beta _4 &=0,\\
 3 \beta _4^2+3 \alpha _4 \beta _2 &=0. \\
 3 \alpha _1^2+3 \alpha _3 \beta _1&=0, \\
 3 \alpha _2 \alpha _4+3 \beta _4 \alpha _4&=0, \\
 3 \alpha _1 \beta _1+3 \beta _3 \beta _1 &=0,\\
\end{array}\end{equation}

For $A_1$ the system (\ref{SELP}) becomes
\begin{equation*}
\begin{array}{rr}
 3 \alpha _1^2+3 \alpha _2 \beta _1+3 \beta _1 &=0,\\
 \alpha _2 \alpha _1-\alpha _1+\alpha _2+\alpha _4 \beta _1+1&=0, \\
% \alpha _2^2-\alpha _1 \alpha _4+\alpha _4&=0, \\
% 3 \beta _1 &=0,\\
 \alpha _1^2-3 \alpha _1+\alpha _2 \beta _1+\beta _1+1 &=0,\\
 \alpha _2 \alpha _1-\alpha _1-2 \alpha _2+\alpha _4 \beta _1 &=0,\\
 3 \alpha _2^2-3 \alpha _1 \alpha _4 &=0, \\
  \alpha _2^2-\alpha _1 \alpha _4+\alpha _4&=0, \\
 3 \beta _1 &=0.\\
\end{array}\end{equation*} The last equation gives $\beta_1=0$ that produces $\alpha_1=0$ due to the first equation and we get a contradiction in the equation 3.

For $A_2$ the system (\ref{SELP}) has the form
\begin{equation*}
\begin{array}{rr}
\alpha _1^2-2 \alpha _1+\beta _2+1  &=0,\\
 %3 \alpha _1^2  &=0,\\
% \beta _1  &=0,\\
 \beta _2+1 &=0, \\
  3 \beta _2 &=0,\\
 3 \alpha _1^2  &=0,\\
 \beta _1  &=0.\\
\end{array}
\end{equation*} The equations 2 and 3 immediately lead to a contradiction.

For $A_3$ due to the system (\ref{SELP}) we have
\begin{equation*}\begin{array}{rr}
\beta _1+\beta _2+1&=0, \\
  \beta _2+1 &=0,\\
   \beta _2-2 &=0,\\
  3 \beta _1 &=0,\\
  1&=0. \\
\end{array}
\end{equation*} which is a contradiction.

For $A_4$ the system (\ref{SELP}) becomes
\begin{equation*}
\begin{array}{rr}
  \alpha _1^2-2 \alpha _1+\beta _2+1 &=0,\\
  3 \alpha _1^2 &=0.\\
\end{array}\end{equation*} from where we get $\alpha _1=0$ and $\beta _2=-1.$

For $A_5$, $A_6$ and $A_7$ the system (\ref{SELP}) gives

%\begin{equation*}
$\begin{array}{rr}
 3 \alpha _1^2&=0, \\
  \alpha _1^2 &=0,\\
  3 &=0.\\
\end{array}$ \quad
%\end{equation*},
%\begin{equation*}
$\begin{array}{rr}
1-\alpha _1 &=0,\\
 \alpha _1^2 &=0,\\
 \beta _1 &=0.\\
 \end{array}$ \ \
%\end{equation*}
and \ \
%\begin{equation*}
$\begin{array}{cc}
 \beta_1&=0, \\
 1 &=0.\\
\end{array},$
%\end{equation*},
respectively. Which are contradictions.

Substituting the structure constants $A_8$ into the system (\ref{SELP}) we get
\begin{equation*}
\alpha _1^2=0.
\end{equation*} hence, $\alpha _1=0.$

For the structure constants of $ A_9, \ A_{10},\ A_{11}$ the system (\ref{SELP}) gives contradictions.

The structure constants of $A_{12}$ satisfy the system of equations (\ref{SELP}).

 \item $I_{19}$ case.  $(A(A\otimes A)-A(I\otimes A(I\otimes A)))(u\otimes v\otimes u\otimes u)=0.$\\
 More detailed proof for this case can be seen in \cite{B3}.
 In this case the system of equations has the following form\\
 $\begin{array}{rr}
  %1)
 	\alpha _4 \alpha _1^2+\alpha _2^2 \alpha _1+2 \alpha _3^2 \alpha _1+2 \alpha _2 \alpha _3 \alpha _1+\alpha _4 \beta _2 \alpha _1+2 \alpha _4 \beta _3 \alpha _1+\alpha _3 \beta _4 \alpha _1+\alpha _4 \beta _2^2&\\
 	+\alpha _2 \alpha _4 \beta _1+\alpha _3 \alpha _4 \beta _1 +\alpha _2^2 \beta _2+\alpha _3^2 \beta _2+\alpha _2 \alpha _3 \beta _2+2 \alpha _2^2 \beta _3+\alpha _2 \alpha _3 \beta _3+\alpha _4 \beta _2 \beta _3&\\
 	+2 \alpha _4 \beta _1 \beta _4+\alpha _2 \beta _2 \beta _4+\alpha _2 \beta _3 \beta _4&=0, \\
 \end{array}$

 $\begin{array}{rr}
 %2)
 	\beta _1 \alpha _3^2+\beta _2^2 \alpha _3+\beta _3^2 \alpha _3+\alpha _2 \beta _1 \alpha _3+\alpha _1 \beta _3 \alpha _3+\beta _2 \beta _3 \alpha _3+\beta _1 \beta _4 \alpha _3+2 \alpha _2 \beta _3^2&\\
 	+\beta _1 \beta _4^2+2 \alpha _1 \alpha _4 \beta _1 +\alpha _4 \beta _1 \beta _2+\alpha _1 \alpha _2 \beta _3+\alpha _4 \beta _1 \beta _3+\alpha _2 \beta _2 \beta _3+2 \beta _2^2 \beta _4&\\
 	+\beta _3^2 \beta _4+2 \alpha _2 \beta _1 \beta _4+\alpha _1 \beta _2 \beta _4+2 \beta _2 \beta _3 \beta _4 &=0,\\
 %3)
 	\beta _2^3+\alpha _1 \beta _2^2+\beta _3 \beta _2^2+2 \alpha _2 \beta _1 \beta _2+2 \alpha _3 \beta _1 \beta _2+\alpha _1 \beta _3 \beta _2+2 \beta _1 \beta _4 \beta _2+\alpha _1 \beta _3^2&\\
 	+2 \alpha _1 \alpha _2 \beta _1+3 \alpha _1 \alpha _3 \beta _1 +\alpha _1^2 \beta _3+3 \alpha _2 \beta _1 \beta _3+\alpha _3 \beta _1 \beta _3+\alpha _1 \beta _1 \beta _4+2 \beta _1 \beta _3 \beta _4 &=0,\\
  %4)
 	\alpha _3^3+\alpha _2 \alpha _3^2+\beta _4 \alpha _3^2+2 \alpha _1 \alpha _4 \alpha _3+2 \alpha _4 \beta _2 \alpha _3+2 \alpha _4 \beta _3 \alpha _3+\alpha _2 \beta _4 \alpha _3+\alpha _2 \beta _4^2&\\
 	+2 \alpha _1 \alpha _2 \alpha _4+\alpha _2 \alpha _4 \beta _2 +3 \alpha _2 \alpha _4 \beta _3+\alpha _2^2 \beta _4+\alpha _1 \alpha _4 \beta _4+3 \alpha _4 \beta _2 \beta _4+2 \alpha _4 \beta _3 \beta _4 &=0,\\
 %5)
 %\end{array}$

 %$\begin{array}{rr}	
 	\beta _1 \alpha _2^2+\beta _2^2 \alpha _2+2 \alpha _3 \beta _1 \alpha _2+3 \beta _2 \beta _3 \alpha _2+2 \beta _1 \beta _4^2+\alpha _3^2 \beta _1+\alpha _1 \alpha _4 \beta _1+2 \alpha _4 \beta _1 \beta _2&\\
 	+\alpha _1 \alpha _3 \beta _3+2 \alpha _4 \beta _1 \beta _3+2 \alpha _3 \beta _2 \beta _3+2 \beta _2^2 \beta _4+2 \alpha _1 \beta _3 \beta _4+2 \beta _2 \beta _3 \beta _4 &=0,\\
 %6)	
 2 \alpha _4 \alpha _1^2+2 \alpha _3^2 \alpha _1+2 \alpha _2 \alpha _3 \alpha _1+2 \alpha _2 \beta _4 \alpha _1+\alpha _4 \beta _2^2+\alpha _4 \beta _3^2+2 \alpha _2 \alpha _4 \beta _1+2 \alpha _3 \alpha _4 \beta _1&\\
 	+2 \alpha _2 \alpha _3 \beta _2+\alpha _3^2 \beta _3 +3 \alpha _2 \alpha _3 \beta _3+2 \alpha _4 \beta _2 \beta _3+\alpha _4 \beta _1 \beta _4+\alpha _2 \beta _2 \beta _4 &=0,\\
 %7)
 	\beta _3 \alpha _3^2+2 \alpha _4 \beta _1 \alpha _3+\alpha _2 \beta _3 \alpha _3+2 \beta _3 \beta _4 \alpha _3+\alpha _4 \beta _2^2+\alpha _4 \beta _3^2+4 \beta _2 \beta _4^2+2 \beta _3 \beta _4^2&\\
  	+2 \alpha _2 \alpha _4 \beta _1+4 \alpha _4 \beta _2 \beta _3+\alpha _4 \beta _1 \beta _4+\alpha _2 \beta _2 \beta _4+2 \alpha _2 \beta _3 \beta _4 &=0,\\
 % \end{array}$
%
% $\begin{array}{rr}	
  %8)
 	2 \alpha _2 \alpha _1^2+4 \alpha _3 \alpha _1^2+\alpha _4 \beta _1 \alpha _1+2 \alpha _2 \beta _2 \alpha _1+2 \alpha _2 \beta _3 \alpha _1+\alpha _3 \beta _3 \alpha _1+\alpha _2 \beta _2^2+\alpha _2^2 \beta _1&\\
 	+\alpha _3^2 \beta _1+4 \alpha _2 \alpha _3 \beta _1+2 \alpha _4 \beta _1 \beta _2+2 \alpha _4 \beta _1 \beta _3+\alpha _2 \beta _2 \beta _3 &=0,\\
 %9)
 	\alpha _4 \beta _3^2+\beta _4^2 \beta _3+\alpha _1 \alpha _4 \beta _3+\alpha _2 \beta _4 \beta _3+\beta _2 \beta _4^2+\alpha _3 \alpha _4 \beta _1+\alpha _4 \beta _1 \beta _4+\alpha _3 \beta _2 \beta _4 &=0,\\
 %10)
 	\alpha _2 \alpha _1^2+\alpha _3 \alpha _1^2+\alpha _4 \beta _1 \alpha _1+\alpha _3 \beta _2 \alpha _1+\alpha _2 \beta _3 \alpha _1+\alpha _2^2 \beta _1+\alpha _4 \beta _1 \beta _2+\alpha _2 \beta _1 \beta _4 &=0,\\
 %11)
 	\beta _1 \alpha _4^2+2 \alpha _1 \alpha _3 \alpha _4+\alpha _3 \beta _3 \alpha _4+\beta _2 \beta _4 \alpha _4+\beta _3 \beta _4 \alpha _4+2 \alpha _2 \alpha _3 \beta _4 &=0,\\
 %12)
 \alpha _4 \beta _1^2+\alpha _1 \alpha _2 \beta _1+\alpha _1 \alpha _3 \beta _1+\alpha _2 \beta _2 \beta _1+2 \beta _2 \beta _4 \beta _1+2 \alpha _1 \beta _2 \beta _3&=0, \\
  %13)
 	2 \beta _1 \alpha _1^2+2 \beta _1 \beta _2 \alpha _1+\beta _1 \beta _3 \alpha _1+\alpha _2 \beta _1^2+\beta _1 \beta _2^2+\beta _1^2 \beta _4 &=0,\\
 %14)
 	\alpha _4 \alpha _3^2+2 \alpha _4 \beta _4 \alpha _3+\alpha _1 \alpha _4^2+2 \alpha _4 \beta _4^2+\alpha _4^2 \beta _3+\alpha _2 \alpha _4 \beta _4&=0, \\	
  %15)
 	2 \beta _4^3+\alpha _4 \beta _2 \beta _4+3 \alpha _4 \beta _3 \beta _4+\alpha _4^2 \beta _1+\alpha _3 \alpha _4 \beta _3 &=0,\\
 %16)
 2 \alpha _1^3+3 \alpha _2 \beta _1 \alpha _1+\alpha _3 \beta _1 \alpha _1+\alpha _4 \beta _1^2+\alpha _2 \beta _1 \beta _2 &=0.
 \end{array}$

 \item $I_{20}$ case. $(A(A\otimes A)+A(I\otimes A(I\otimes A)))(u\otimes v\otimes u\otimes u)=0$.

 Here is the system of equations\\
 $\begin{array}{rr}
 %1)
 	\alpha _4 \alpha _1^2+\alpha _2^2 \alpha _1+2 \alpha _3^2 \alpha _1+2 \alpha _2 \alpha _3 \alpha _1+\alpha _4 \beta _2 \alpha _1+2 \alpha _4 \beta _3 \alpha _1+\alpha _3 \beta _4 \alpha _1+\alpha _4 \beta _2^2&\\
 	+\alpha _2 \alpha _4 \beta _1+\alpha _3 \alpha _4 \beta _1 +\alpha _2^2 \beta _2+\alpha _3^2 \beta _2+\alpha _2 \alpha _3 \beta _2+2 \alpha _2^2 \beta _3+\alpha _2 \alpha _3 \beta _3+\alpha _4 \beta _2 \beta _3&\\
 	+2 \alpha _4 \beta _1 \beta _4+\alpha _2 \beta _2 \beta _4+\alpha _2 \beta _3 \beta _4&=0, \\
  %2)
 \beta _1 \alpha _3^2+\beta _2^2 \alpha _3+\beta _3^2 \alpha _3+\alpha _2 \beta _1 \alpha _3+\alpha _1 \beta _3 \alpha _3+\beta _2 \beta _3 \alpha _3+\beta _1 \beta _4 \alpha _3+2 \alpha _2 \beta _3^2&\\
 	+\beta _1 \beta _4^2+2 \alpha _1 \alpha _4 \beta _1 +\alpha _4 \beta _1 \beta _2+\alpha _1 \alpha _2 \beta _3+\alpha _4 \beta _1 \beta _3+\alpha _2 \beta _2 \beta _3+2 \beta _2^2 \beta _4&\\
 	+\beta _3^2 \beta _4+2 \alpha _2 \beta _1 \beta _4+\alpha _1 \beta _2 \beta _4+2 \beta _2 \beta _3 \beta _4 &=0,\\
 % \end{array}$
%
% $\begin{array}{rr}	
 %3)	
 	\alpha _3^3+\alpha _2 \alpha _3^2+\beta _4 \alpha _3^2+2 \alpha _1 \alpha _4 \alpha _3+2 \alpha _4 \beta _2 \alpha _3+2 \alpha _4 \beta _3 \alpha _3+\alpha _2 \beta _4 \alpha _3+\alpha _2 \beta _4^2&\\
 	+2 \alpha _1 \alpha _2 \alpha _4+\alpha _2 \alpha _4 \beta _2 +3 \alpha _2 \alpha _4 \beta _3+\alpha _2^2 \beta _4+\alpha _1 \alpha _4 \beta _4+3 \alpha _4 \beta _2 \beta _4+2 \alpha _4 \beta _3 \beta _4 &=0,\\

 %4)
 	\beta _2^3+\alpha _1 \beta _2^2+\beta _3 \beta _2^2+2 \alpha _2 \beta _1 \beta _2+2 \alpha _3 \beta _1 \beta _2+\alpha _1 \beta _3 \beta _2+2 \beta _1 \beta _4 \beta _2+\alpha _1 \beta _3^2&\\
 	+2 \alpha _1 \alpha _2 \beta _1+3 \alpha _1 \alpha _3 \beta _1 +\alpha _1^2 \beta _3+3 \alpha _2 \beta _1 \beta _3+\alpha _3 \beta _1 \beta _3+\alpha _1 \beta _1 \beta _4+2 \beta _1 \beta _3 \beta _4 &=0,\\
  %\end{array}$
%
% $\begin{array}{rr}
 %5)
 	2 \alpha _4 \alpha _1^2+2 \alpha _3^2 \alpha _1+2 \alpha _2 \alpha _3 \alpha _1+2 \alpha _2 \beta _4 \alpha _1+\alpha _4 \beta _2^2+\alpha _4 \beta _3^2+2 \alpha _2 \alpha _4 \beta _1+2 \alpha _3 \alpha _4 \beta _1&\\
 	+2 \alpha _2 \alpha _3 \beta _2+\alpha _3^2 \beta _3 +3 \alpha _2 \alpha _3 \beta _3+2 \alpha _4 \beta _2 \beta _3+\alpha _4 \beta _1 \beta _4+\alpha _2 \beta _2 \beta _4 &=0,\\
 %6)
 	 	\beta _1 \alpha _2^2+\beta _2^2 \alpha _2+2 \alpha _3 \beta _1 \alpha _2+3 \beta _2 \beta _3 \alpha _2+2 \beta _1 \beta _4^2+\alpha _3^2 \beta _1+\alpha _1 \alpha _4 \beta _1+2 \alpha _4 \beta _1 \beta _2&\\
 	+\alpha _1 \alpha _3 \beta _3+2 \alpha _4 \beta _1 \beta _3+2 \alpha _3 \beta _2 \beta _3+2 \beta _2^2 \beta _4+2 \alpha _1 \beta _3 \beta _4+2 \beta _2 \beta _3 \beta _4 &=0,\\
  %7)
 	\beta _3 \alpha _3^2+2 \alpha _4 \beta _1 \alpha _3+\alpha _2 \beta _3 \alpha _3+2 \beta _3 \beta _4 \alpha _3+\alpha _4 \beta _2^2+\alpha _4 \beta _3^2+4 \beta _2 \beta _4^2+2 \beta _3 \beta _4^2&\\
 	+2 \alpha _2 \alpha _4 \beta _1+4 \alpha _4 \beta _2 \beta _3+\alpha _4 \beta _1 \beta _4+\alpha _2 \beta _2 \beta _4+2 \alpha _2 \beta _3 \beta _4 &=0,\\
 \end{array}$

 $\begin{array}{rr}
 %8)
 	2 \alpha _2 \alpha _1^2+4 \alpha _3 \alpha _1^2+\alpha _4 \beta _1 \alpha _1+2 \alpha _2 \beta _2 \alpha _1+2 \alpha _2 \beta _3 \alpha _1+\alpha _3 \beta _3 \alpha _1+\alpha _2 \beta _2^2+\alpha _2^2 \beta _1&\\
 	+\alpha _3^2 \beta _1+4 \alpha _2 \alpha _3 \beta _1+2 \alpha _4 \beta _1 \beta _2+2 \alpha _4 \beta _1 \beta _3+\alpha _2 \beta _2 \beta _3 &=0,\\
 %9)
 	\alpha _2 \alpha _1^2+\alpha _3 \alpha _1^2+\alpha _4 \beta _1 \alpha _1+\alpha _3 \beta _2 \alpha _1+\alpha _2 \beta _3 \alpha _1+\alpha _2^2 \beta _1+\alpha _4 \beta _1 \beta _2+\alpha _2 \beta _1 \beta _4 &=0,\\
 %10)
 	\alpha _4 \beta _3^2+\beta _4^2 \beta _3+\alpha _1 \alpha _4 \beta _3+\alpha _2 \beta _4 \beta _3+\beta _2 \beta _4^2+\alpha _3 \alpha _4 \beta _1+\alpha _4 \beta _1 \beta _4+\alpha _3 \beta _2 \beta _4 &=0,\\

 %11)
 	\beta _1 \alpha _4^2+2 \alpha _1 \alpha _3 \alpha _4+\alpha _3 \beta _3 \alpha _4+\beta _2 \beta _4 \alpha _4+\beta _3 \beta _4 \alpha _4+2 \alpha _2 \alpha _3 \beta _4 &=0,\\
 %12)
 	\alpha _4 \alpha _3^2+2 \alpha _4 \beta _4 \alpha _3+\alpha _1 \alpha _4^2+2 \alpha _4 \beta _4^2+\alpha _4^2 \beta _3+\alpha _2 \alpha _4 \beta _4&=0,\\
 %13)
 	  	2 \beta _1 \alpha _1^2+2 \beta _1 \beta _2 \alpha _1+\beta _1 \beta _3 \alpha _1+\alpha _2 \beta _1^2+\beta _1 \beta _2^2+\beta _1^2 \beta _4 &=0,\\
 %14)
 	\alpha _4 \beta _1^2+\alpha _1 \alpha _2 \beta _1+\alpha _1 \alpha _3 \beta _1+\alpha _2 \beta _2 \beta _1+2 \beta _2 \beta _4 \beta _1+2 \alpha _1 \beta _2 \beta _3&=0, \\
 %15)
 	2 \alpha _1^3+3 \alpha _2 \beta _1 \alpha _1+\alpha _3 \beta _1 \alpha _1+\alpha _4 \beta _1^2+\alpha _2 \beta _1 \beta _2 &=0,\\
 %16)
 	2 \beta _4^3+\alpha _4 \beta _2 \beta _4+3 \alpha _4 \beta _3 \beta _4+\alpha _4^2 \beta _1+\alpha _3 \alpha _4 \beta _3 &=0.

 \end{array}$

 %\newpage
 $\begin{array}{rr}

   \end{array}$

  \item $I_{21}$ case. $ A(A\otimes A)((u\otimes v-v\otimes u)\otimes u^{\otimes 2})=A(I\otimes A(I\otimes A))(u\otimes v\otimes u^{\otimes 2})-A(I\otimes A(A\otimes I))(u^{\otimes 3}\otimes v).$\\
  The system of equations is given as follows\\
 $\begin{array}{rrr}
 %1)
 		\alpha _1 \alpha _2^2+\beta _2 \alpha _2^2+\beta _3 \alpha _2^2+\alpha _3 \beta _2 \alpha _2-\alpha _3 \beta _3 \alpha _2+\alpha _1 \beta _4 \alpha _2+\beta _2 \beta _4 \alpha _2-\beta _3 \beta _4 \alpha _2-\alpha _1 \alpha _3^2&& \\
 	 	+2 \alpha _4 \beta _2^2 -2 \alpha _4 \beta _3^2-2 \alpha _3^2 \beta _3-\alpha _1 \alpha _3 \beta _4 &=0, &\\
 %2)
 2 \beta _1 \alpha _2^2+2 \beta _2^2 \alpha _2-\beta _3^2 \alpha _2+\alpha _1 \beta _3 \alpha _2+\beta _2 \beta _3 \alpha _2-\alpha _3 \beta _3^2-2 \alpha _3^2 \beta _1-\alpha _1 \alpha _3 \beta _3-\alpha _3 \beta _2 \beta _3 && \\
 	+\beta _2^2 \beta _4-\beta _3^2 \beta _4+\alpha _1 \beta _2 \beta _4-\alpha _1 \beta _3 \beta _4  &=0, &\\
 %3)
 	2 \alpha _1 \alpha _2^2+2 \beta _2 \alpha _2^2+\alpha _1 \alpha _3 \alpha _2+\alpha _3 \beta _2 \alpha _2-3 \alpha _3 \beta _3 \alpha _2-3 \alpha _1 \alpha _3^2+\alpha _4 \beta _2^2-\alpha _4 \beta _3^2+\alpha _3^2 \beta _2&&\\+\alpha _1 \alpha _4 \beta _2-\alpha _3^2 \beta _3-\alpha _1 \alpha _4 \beta _3  &=0, &\\
 %4)
 	\alpha _3^3-\alpha _2^2 \alpha _3+2 \alpha _1 \alpha _4 \alpha _3-2 \alpha _4 \beta _2 \alpha _3+2 \alpha _4 \beta _3 \alpha _3+\alpha _2 \beta _4 \alpha _3-2 \alpha _1 \alpha _2 \alpha _4-2 \alpha _2 \alpha _4 \beta _2 && \\
 	+2 \alpha _2 \alpha _4 \beta _3-\alpha _2^2 \beta _4-\alpha _4 \beta _2 \beta _4+\alpha _4 \beta _3 \beta _4 &=0, &\\
 %5)
 	\beta _2^3-\beta _3^2 \beta _2+2 \alpha _2 \beta _1 \beta _2-2 \alpha _3 \beta _1 \beta _2+\alpha _1 \beta _3 \beta _2+2 \beta _1 \beta _4 \beta _2-\alpha _1 \beta _3^2+\alpha _1 \alpha _2 \beta _1 && \\
 	-\alpha _1 \alpha _3 \beta _1+2 \alpha _2 \beta _1 \beta _3-2 \alpha _3 \beta _1 \beta _3-2 \beta _1 \beta _3 \beta _4  &=0, &\\
 %6)	
 \beta _2^2 \alpha _2+3 \beta _2 \beta _3 \alpha _2+\beta _1 \beta _4 \alpha _2-\alpha _3 \beta _2^2-2 \alpha _3 \beta _3^2-\alpha _3^2 \beta _1+\alpha _2^2 \beta _1-\alpha _3 \beta _2 \beta _3&&\\+3 \beta _2^2 \beta _4-2 \beta _3^2 \beta _4-\alpha _3 \beta _1 \beta _4-\beta _2 \beta _3 \beta _4 &=0, &\\
 %7)
 	\alpha _3 \alpha _1^2-\alpha _2 \beta _2 \alpha _1-\alpha _2 \beta _3 \alpha _1+2 \alpha _3 \beta _3 \alpha _1-\alpha _2 \beta _2^2+\alpha _2 \beta _3^2-\alpha _1^2 \alpha _2-\alpha _2^2 \beta _1&&\\+\alpha _3^2 \beta _1-2 \alpha _4 \beta _1 \beta _2+2 \alpha _4 \beta _1 \beta _3 &=0, & \\
 %8)
 	\beta _3 \alpha _2^2+2 \alpha _4 \beta _1 \alpha _2+2 \beta _2 \beta _4 \alpha _2-\beta _3 \beta _4 \alpha _2+\alpha _4 \beta _2^2-\alpha _4 \beta _3^2+\beta _2 \beta _4^2-\beta _3 \beta _4^2&&\\-2 \alpha _3 \alpha _4 \beta _1-\alpha _3^2 \beta _3-\alpha _3 \beta _3 \beta _4  &=0, &\\
 %9)
 	2 \alpha _2 \alpha _1^2-2 \alpha _3 \alpha _1^2+\alpha _2 \beta _2 \alpha _1+\alpha _3 \beta _2 \alpha _1-\alpha _2 \beta _3 \alpha _1-\alpha _3 \beta _3 \alpha _1+\alpha _2^2 \beta _1-\alpha _2 \alpha _3 \beta _1&&\\+\alpha _4 \beta _1 \beta _2-\alpha _4 \beta _1 \beta _3 &=0, &\\
 %10)
 	\alpha _4 \beta _3^2+2 \beta _4^2 \beta _3-\alpha _4 \beta _2 \beta _3-\alpha _2 \beta _4 \beta _3+\alpha _3 \beta _4 \beta _3-2 \beta _2 \beta _4^2-\alpha _2 \alpha _4 \beta _1+\alpha _3 \alpha _4 \beta _1&&\\-\alpha _2 \beta _2 \beta _4+\alpha _3 \beta _2 \beta _4 &=0, & \\

 	\beta _4 \alpha _2^2+\alpha _1 \alpha _4 \alpha _2-\alpha _1 \alpha _3 \alpha _4+\alpha _3 \alpha _4 \beta _2-\alpha _3 \alpha _4 \beta _3-\alpha _3^2 \beta _4+2 \alpha _4 \beta _2 \beta _4-2 \alpha _4 \beta _3 \beta _4 &=0, & \\
 	
 	\alpha _1 \beta _2^2+\alpha _2 \beta _1 \beta _2-\alpha _3 \beta _1 \beta _2+\beta _1 \beta _4 \beta _2-\alpha _1 \beta _3^2+2 \alpha _1 \alpha _2 \beta _1-2 \alpha _1 \alpha _3 \beta _1-\beta _1 \beta _3 \beta _4  &=0, &\\

 	\alpha _2 \alpha _4 \beta _3-\alpha _3 \alpha _4 \beta _3-\alpha _4 \beta _4 \beta _3+\alpha _4 \beta _2 \beta _4 &=0. &\\
 \alpha _1 \alpha _2 \beta _1-\alpha _1 \alpha _3 \beta _1+\alpha _2 \beta _2 \beta _1-\alpha _2 \beta _3 \beta _1  &=0,& \\
 \alpha _4 \alpha _3^2-\alpha _2 \alpha _4 \alpha _3-\alpha _4^2 \beta _2+\alpha _4^2 \beta _3  &=0, &\\
 	\alpha _2 \beta _1^2-\alpha _3 \beta _1^2+\beta _2^2 \beta _1-\beta _2 \beta _3 \beta _1 &=0. &
 \end{array}$

 \item $I_{22}$ case. $A(A\otimes A)((u\otimes v-v\otimes u)\otimes u^{\otimes 2})$

 \hfill $=-A(I\otimes A(I\otimes A))(u\otimes v\otimes u^{\otimes 2})+A(I\otimes A(A\otimes I))(u^{\otimes 3}\otimes v).$

 In this case the system of equations is\\
 $\begin{array}{rr}
 %1)
 	\alpha _1 \alpha _2^2+\beta _2 \alpha _2^2+\beta _3 \alpha _2^2-\alpha _3 \beta _2 \alpha _2-3 \alpha _3 \beta _3 \alpha _2-\alpha _1 \beta _4 \alpha _2-\beta _2 \beta _4 \alpha _2+\beta _3 \beta _4 \alpha _2-\alpha _1 \alpha _3^2&\\
 +2 \alpha _3^2 \beta _2+\alpha _1 \alpha _3 \beta _4 &=0,\\
 %2)
 	2 \alpha _3 \beta _2^2-\beta _4 \beta _2^2-3 \alpha _2 \beta _3 \beta _2-\alpha _3 \beta _3 \beta _2+\alpha _1 \beta _4 \beta _2+\alpha _2 \beta _3^2+\alpha _3 \beta _3^2+\alpha _1 \alpha _2 \beta _3-\alpha _1 \alpha _3 \beta _3&\\+\beta _3^2 \beta _4-\alpha _1 \beta _3 \beta _4 &=0,\\
 %3)
 	\beta _2^2 \alpha _2-2 \beta _3^2 \alpha _2+\beta _2 \beta _3 \alpha _2-\beta _1 \beta _4 \alpha _2-\alpha _3 \beta _2^2-\alpha _3^2 \beta _1+\alpha _2^2 \beta _1+\alpha _3 \beta _2 \beta _3-\beta _2^2 \beta _4&\\+\alpha _3 \beta _1 \beta _4+\beta _2 \beta _3 \beta _4 &=0,\\
 %4)
 	2 \beta _3 \alpha _2^2-\alpha _1 \alpha _3 \alpha _2-\alpha _3 \beta _2 \alpha _2-\alpha _3 \beta _3 \alpha _2+\alpha _1 \alpha _3^2+\alpha _4 \beta _2^2-\alpha _4 \beta _3^2+\alpha _3^2 \beta _2-\alpha _1 \alpha _4 \beta _2&\\-\alpha _3^2 \beta _3+\alpha _1 \alpha _4 \beta _3 &=0,\\
% \end{array}$
%
% $\begin{array}{rr}	
 %5)
 	\beta _3 \alpha _2^2-\beta _3 \beta _4 \alpha _2+\alpha _4 \beta _2^2+\alpha _4 \beta _3^2-\beta _2 \beta _4^2+\beta _3 \beta _4^2-\alpha _3^2 \beta _3-2 \alpha _4 \beta _2 \beta _3&\\+2 \alpha _3 \beta _2 \beta _4-\alpha _3 \beta _3 \beta _4&=0, \\
 %6)
 	\alpha _3 \alpha _1^2+\alpha _2 \beta _2 \alpha _1-2 \alpha _3 \beta _2 \alpha _1+\alpha _2 \beta _3 \alpha _1+\alpha _2 \beta _2^2-\alpha _2 \beta _3^2-\alpha _1^2 \alpha _2-\alpha _2^2 \beta _1&\\-\alpha _3^2 \beta _1+2 \alpha _2 \alpha _3 \beta _1&=0, \\
 %7)
 	\alpha _4 \beta _3^2-\alpha _4 \beta _2 \beta _3+\alpha _2 \beta _4 \beta _3-\alpha _3 \beta _4 \beta _3-\alpha _2 \alpha _4 \beta _1+\alpha _3 \alpha _4 \beta _1-\alpha _2 \beta _2 \beta _4+\alpha _3 \beta _2 \beta _4 &=0,\\
 %8)
 \beta _1 \alpha _2^2-\alpha _3 \beta _1 \alpha _2-\alpha _1 \beta _2 \alpha _2+\alpha _1 \beta _3 \alpha _2+\alpha _1 \alpha _3 \beta _2+\alpha _4 \beta _1 \beta _2-\alpha _1 \alpha _3 \beta _3-\alpha _4 \beta _1 \beta _3 &=0,\\
 	
 %9)
 	\alpha _3^3-\alpha _2^2 \alpha _3-\alpha _2 \beta _4 \alpha _3-2 \alpha _2 \alpha _4 \beta _2+2 \alpha _2 \alpha _4 \beta _3+\alpha _2^2 \beta _4+\alpha _4 \beta _2 \beta _4-\alpha _4 \beta _3 \beta _4&=0, \\
 %10)
 \beta _2^3-\beta _3^2 \beta _2-\alpha _1 \beta _3 \beta _2+\alpha _1 \beta _3^2-\alpha _1 \alpha _2 \beta _1+\alpha _1 \alpha _3 \beta _1+2 \alpha _2 \beta _1 \beta _3-2 \alpha _3 \beta _1 \beta _3 &=0,\\	
 %11)
 	\beta _4 \alpha _2^2+\alpha _1 \alpha _4 \alpha _2-2 \alpha _3 \beta _4 \alpha _2-\alpha _1 \alpha _3 \alpha _4+\alpha _3 \alpha _4 \beta _2-\alpha _3 \alpha _4 \beta _3+\alpha _3^2 \beta _4 &=0,\\

 %12)
 	\alpha _1 \beta _2^2-\alpha _2 \beta _1 \beta _2+\alpha _3 \beta _1 \beta _2-2 \alpha _1 \beta _3 \beta _2-\beta _1 \beta _4 \beta _2+\alpha _1 \beta _3^2+\beta _1 \beta _3 \beta _4 &=0,\\
 %13)
 	\alpha _1 \alpha _2 \beta _1-\alpha _1 \alpha _3 \beta _1+\alpha _2 \beta _2 \beta _1-\alpha _2 \beta _3 \beta _1&=0, \\

 %14)
 \alpha _2 \alpha _4 \beta _3-\alpha _3 \alpha _4 \beta _3-\alpha _4 \beta _4 \beta _3+\alpha _4 \beta _2 \beta _4 &=0.\\
 	
  %15)
 	\alpha _2 \beta _1^2-\alpha _3 \beta _1^2+\beta _2^2 \beta _1-\beta _2 \beta _3 \beta _1&=0, \\
 %16)
 \alpha _4 \alpha _3^2-\alpha _2 \alpha _4 \alpha _3-\alpha _4^2 \beta _2+\alpha _4^2 \beta _3 &=0,\\	
 \end{array}$

 \item $I_{23}$ case. $A(A(A\otimes I)\otimes I)((u\otimes v\otimes w+v\otimes w\otimes u+w\otimes u\otimes v)\otimes u)$\\ $=A(A\otimes A)(u\otimes v\otimes u\otimes w)+A(A(I\otimes A)\otimes I)(v\otimes u\otimes w\otimes u)+A(A(A\otimes I)\otimes I)(u\otimes w\otimes u\otimes v).$
 The identity is equivalent to the following system of equations\\
 $\begin{array}{rrr}
 %1)
 	\beta _1 \alpha _2^2+\alpha _1 \beta _2 \alpha _2+\beta _1 \beta _4 \alpha _2+\alpha _3 \beta _2^2-\alpha _3 \beta _3^2-2 \beta _1 \beta _4^2-\alpha _3^2 \beta _1-\alpha _1 \alpha _3 \beta _2-\alpha _4 \beta _1 \beta _2&&\\
 	+\alpha _4 \beta _1 \beta _3+\beta _3^2 \beta _4+\alpha _3 \beta _1 \beta _4-2 \alpha _1 \beta _3 \beta _4+\beta _2 \beta _3 \beta _4 &=0,&\\
 %2)
 	2 \alpha _4 \alpha _1^2-\alpha _2^2 \alpha _1-\alpha _2 \alpha _3 \alpha _1-\alpha _4 \beta _2 \alpha _1-\alpha _4 \beta _3 \alpha _1+2 \alpha _2 \beta _4 \alpha _1+\alpha _4 \beta _2^2-\alpha _4 \beta _3^2&&\\
 	-\alpha _2 \alpha _4 \beta _1+\alpha _3 \alpha _4 \beta _1+\alpha _2^2 \beta _2-\alpha _3^2 \beta _2+\alpha _3 \beta _2 \beta _4-\alpha _3 \beta _3 \beta _4 &=0,&\\
 %3)
 	\alpha _2 \beta _2^2+\beta _4 \beta _2^2-\alpha _1 \alpha _2 \beta _2-\alpha _1 \alpha _3 \beta _2-\alpha _4 \beta _1 \beta _2-2 \alpha _3 \beta _3 \beta _2+\beta _3 \beta _4 \beta _2+\alpha _2 \beta _3^2+2 \alpha _1 \alpha _4 \beta _1&&\\
 	+\alpha _4 \beta _1 \beta _3-2 \beta _3^2 \beta _4+\alpha _2 \beta _1 \beta _4-\alpha _3 \beta _1 \beta _4 & =0,&\\
 %4)
 	\beta _1 \alpha _3^2-\beta _2^2 \alpha _3+\beta _3^2 \alpha _3+\alpha _2 \beta _1 \alpha _3+\beta _2 \beta _3 \alpha _3-\beta _1 \beta _4^2-2 \alpha _1 \alpha _2 \beta _2-\alpha _4 \beta _1 \beta _2+\alpha _4 \beta _1 \beta _3&&\\
 	+\alpha _2 \beta _2 \beta _3+\beta _3^2 \beta _4-\alpha _2 \beta _1 \beta _4-\alpha _1 \beta _2 \beta _4&=0,& \\
 %5)
 	2 \alpha _1 \alpha _2^2-\beta _3 \alpha _2^2-\alpha _1 \alpha _3 \alpha _2-\alpha _4 \beta _1 \alpha _2+2 \alpha _3 \beta _2 \alpha _2-\alpha _1 \alpha _3^2+\alpha _3 \alpha _4 \beta _1+\alpha _1 \alpha _4 \beta _2-\alpha _3^2 \beta _3&&\\
 	-\alpha _1 \alpha _4 \beta _3-2 \alpha _4 \beta _1 \beta _4+\alpha _3 \beta _2 \beta _4+\alpha _3 \beta _3 \beta _4&=0,&\\

 %6)
 	\alpha _4 \alpha _1^2+\alpha _4 \beta _3 \alpha _1+\alpha _3 \beta _4 \alpha _1-\alpha _1 \alpha _2^2-\alpha _4 \beta _2^2-\alpha _2 \alpha _4 \beta _1+\alpha _3 \alpha _4 \beta _1-\alpha _2^2 \beta _2+\alpha _3^2 \beta _2&& \\
 	-\alpha _2 \alpha _3 \beta _2-\alpha _2 \alpha _3 \beta _3-\alpha _4 \beta _2 \beta _3+2 \alpha _3 \beta _3 \beta _4 &=0,&\\

 %7)
 	\alpha _3 \alpha _1^2+\alpha _4 \beta _1 \alpha _1-2 \alpha _2 \beta _2 \alpha _1+\alpha _3 \beta _2 \alpha _1+\alpha _3 \beta _3 \alpha _1-\alpha _3 \beta _2^2+\alpha _3 \beta _3^2-\alpha _1^2 \alpha _2+\alpha _3^2 \beta _1&& \\
 	-2 \alpha _2 \alpha _3 \beta _1-\alpha _4 \beta _1 \beta _2+\alpha _3 \beta _1 \beta _4 &=0,&\\
 \end{array}$

   $\begin{array}{rrr}	
 %8)
 	2 \alpha _1 \alpha _2^2-\beta _3 \alpha _2^2-\alpha _1 \alpha _3 \alpha _2+2 \alpha _4 \beta _1 \alpha _2+\alpha _3 \beta _2 \alpha _2-\alpha _1 \alpha _3^2-\alpha _3^2 \beta _2+\alpha _1 \alpha _4 \beta _2-\alpha _3^2 \beta _3&&\\
 	-\alpha _1 \alpha _4 \beta _3+\alpha _4 \beta _1 \beta _4-\alpha _3 \beta _3 \beta _4 &=0,&\\
  %9)
 	\alpha _2^3-\alpha _3^2 \alpha _2+\alpha _1 \alpha _4 \alpha _2+2 \alpha _4 \beta _2 \alpha _2-2 \alpha _4 \beta _3 \alpha _2+\alpha _3 \beta _4 \alpha _2-\alpha _1 \alpha _3 \alpha _4+\alpha _4^2 \beta _1-\alpha _3 \alpha _4 \beta _3&&\\
 	-\alpha _3^2 \beta _4+\alpha _4 \beta _2 \beta _4-\alpha _4 \beta _3 \beta _4&=0, &\\
  %10)
 	\beta _3^3-\beta _2^2 \beta _3-2 \alpha _2 \beta _1 \beta _3+2 \alpha _3 \beta _1 \beta _3+\alpha _1 \beta _2 \beta _3+\beta _1 \beta _4 \beta _3+\alpha _4 \beta _1^2-\alpha _1 \beta _2^2-\alpha _1 \alpha _2 \beta _1&&\\
 	+\alpha _1 \alpha _3 \beta _1-\alpha _2 \beta _1 \beta _2-\beta _1 \beta _2 \beta _4 &=0,&\\
 %11)
 	\alpha _2 \beta _2^2+\alpha _3 \beta _2^2+\beta _4 \beta _2^2+\alpha _1 \alpha _2 \beta _2-\alpha _3 \beta _3 \beta _2+\beta _3 \beta _4 \beta _2+\alpha _2 \beta _3^2-\alpha _1 \alpha _4 \beta _1-2 \alpha _4 \beta _1 \beta _3&& \\
 	-2 \beta _3^2 \beta _4+\alpha _2 \beta _1 \beta _4-\alpha _3 \beta _1 \beta _4 &=0,&\\
 %12)
 	\beta _2 \alpha _2^2+\beta _2 \beta _4 \alpha _2+\alpha _4 \beta _2^2+\beta _2 \beta _4^2-\beta _3 \beta _4^2-\alpha _3 \alpha _4 \beta _1-\alpha _3^2 \beta _2+\alpha _1 \alpha _4 \beta _2-2 \alpha _4 \beta _2 \beta _3&&\\
 	+\alpha _4 \beta _1 \beta _4+\alpha _3 \beta _2 \beta _4-2 \alpha _3 \beta _3 \beta _4 &=0,&\\
  %13)
   \beta _4 \alpha _2^2-\alpha _1 \alpha _4 \alpha _2+2 \alpha _4 \beta _2 \alpha _2+\alpha _3 \beta _4 \alpha _2-2 \alpha _3 \beta _4^2+\alpha _1 \alpha _3 \alpha _4-3 \alpha _3 \alpha _4 \beta _2+\alpha _3 \alpha _4 \beta _3&&\\
 	-2 \alpha _1 \alpha _4 \beta _4+3 \alpha _4 \beta _2 \beta _4-\alpha _4 \beta _3 \beta _4 &=0,&\\
  %14)
 	 	3 \alpha _2 \alpha _1^2-3 \alpha _3 \alpha _1^2+2 \alpha _4 \beta _1 \alpha _1+\alpha _3 \beta _2 \alpha _1-2 \alpha _2 \beta _3 \alpha _1-\alpha _3 \beta _3 \alpha _1-\alpha _2^2 \beta _1-2 \alpha _3^2 \beta _1&& \\
 	+3 \alpha _2 \alpha _3 \beta _1-\alpha _4 \beta _1 \beta _2+\alpha _4 \beta _1 \beta _3 &=0,&\\
 %\end{array}$
%
%   $\begin{array}{rrr}	
   %15)
 	\alpha _2^3+\alpha _3 \alpha _2^2-\beta _4 \alpha _2^2-\alpha _1 \alpha _4 \alpha _2+2 \alpha _4 \beta _2 \alpha _2+\alpha _4 \beta _3 \alpha _2-\alpha _3 \beta _4^2-\alpha _1 \alpha _3 \alpha _4-\alpha _3 \alpha _4 \beta _3&&\\
 	-\alpha _1 \alpha _4 \beta _4+\alpha _4 \beta _3 \beta _4&=0,& \\
 %16)
 	\beta _3^3-\alpha _1 \beta _3^2+\beta _2 \beta _3^2+\alpha _2 \beta _1 \beta _3+2 \alpha _3 \beta _1 \beta _3-\beta _1 \beta _4 \beta _3+\alpha _1 \alpha _2 \beta _1-\alpha _1^2 \beta _2&\\-\alpha _2 \beta _1 \beta _2-\alpha _1 \beta _1 \beta _4-\beta _1 \beta _2 \beta _4 &=0,&\\

 %17)
 	2 \beta _2 \alpha _1^2-\beta _3^2 \alpha _1+\alpha _2 \beta _1 \alpha _1-3 \alpha _3 \beta _1 \alpha _1-\beta _2 \beta _3 \alpha _1+2 \beta _1 \beta _4 \alpha _1-\alpha _2 \beta _1 \beta _2+3 \alpha _3 \beta _1 \beta _2&\\
 	-2 \alpha _3 \beta _1 \beta _3-\beta _1 \beta _2 \beta _4+\beta _1 \beta _3 \beta _4 &=0,&\\

 %18)
 	2 \alpha _4 \beta _2^2+3 \beta _4^2 \beta _2-3 \alpha _4 \beta _3 \beta _2+\alpha _2 \beta _4 \beta _2-\alpha _3 \beta _4 \beta _2+\alpha _4 \beta _3^2-3 \beta _3 \beta _4^2-\alpha _2 \alpha _4 \beta _1&&\\
 	+\alpha _3 \alpha _4 \beta _1-2 \alpha _4 \beta _1 \beta _4+2 \alpha _2 \beta _3 \beta _4&=0,& \\

 %19)
 	\beta _2 \alpha _2^2+\alpha _3 \beta _2 \alpha _2+\alpha _4 \beta _2^2-\alpha _4 \beta _3^2-\alpha _2 \alpha _4 \beta _1-\alpha _3 \alpha _4 \beta _1-\alpha _4 \beta _1 \beta _4+\alpha _3 \beta _2 \beta _4&=0, &\\
%20)
 	\alpha _3 \beta _3^2+\alpha _3^2 \beta _1-\alpha _2^2 \beta _1-\alpha _1 \alpha _4 \beta _1+\alpha _1 \alpha _3 \beta _2-\alpha _4 \beta _1 \beta _2-\alpha _4 \beta _1 \beta _3+\alpha _3 \beta _2 \beta _3 &=0,&\\
 %21)
 	2 \alpha _4 \alpha _2^2-\alpha _3 \alpha _4 \alpha _2-\alpha _1 \alpha _4^2+2 \alpha _4^2 \beta _2-\alpha _4^2 \beta _3-\alpha _3 \alpha _4 \beta _4&=0,& \\
 %22)
 	\alpha _2 \beta _1^2-2 \alpha _3 \beta _1^2+\beta _4 \beta _1^2-2 \beta _3^2 \beta _1+\alpha _1 \beta _2 \beta _1+\beta _2 \beta _3 \beta _1 &=0,&\\

 %23)
 	\alpha _4 \beta _1^2+2 \alpha _1 \alpha _2 \beta _1-2 \alpha _1 \alpha _3 \beta _1+\alpha _3 \beta _2 \beta _1-2 \alpha _3 \beta _3 \beta _1 &=0,&\\
 %24)
 	2 \alpha _2 \beta _2 \alpha _4-\alpha _3 \beta _2 \alpha _4+2 \beta _2 \beta _4 \alpha _4-2 \beta _3 \beta _4 \alpha _4-\alpha _4^2 \beta _1 &=0.&\\
 \end{array}$

For $A_1$ the system above has the form

%\begin{equation*}
$\begin{array}{rr}
 \alpha _4 \beta _1^2-\alpha _1 \beta _1+\alpha _1 \alpha _2 \beta _1-2 \alpha _2 \beta _1-2 \beta _1 &=0,\\
 \alpha _1^2+\alpha _2 \alpha _1-2 \alpha _4 \beta _1 \alpha _1+\alpha _1-\alpha _2+2 \alpha _2^2 \beta _1+\alpha _2 \beta _1-\beta _1-1 &=0,\\
 \alpha _2 \alpha _1^2+\alpha _1^2-3 \alpha _2 \alpha _1+\alpha _4 \beta _1 \alpha _1-3 \alpha _1+\alpha _2+2 \alpha _2 \beta _1-\alpha _4 \beta _1+\beta _1+1 &=0,\\
 2 \alpha _4 \alpha _1^2-2 \alpha _2^2 \alpha _1-\alpha _2 \alpha _1-2 \alpha _4 \alpha _1+\alpha _1+3 \alpha _2^2+3 \alpha _2-\alpha _4 \beta _1 &=0, \\
 2 \alpha _2 \alpha _1^2-3 \alpha _1^2-3 \alpha _2 \alpha _1+2 \alpha _4 \beta _1 \alpha _1-\alpha _1-\alpha _2 \beta _1+\alpha _4 \beta _1-2 \beta _1 &=0,\\
4 \alpha _4 \alpha _1^2-4 \alpha _2^2 \alpha _1+\alpha _2 \alpha _1+\alpha _4 \alpha _1+\alpha _1+\alpha _2^2+\alpha _2-\alpha _4+\alpha _4 \beta _1&=0, \\

 2 \alpha _1 \alpha _2^2-3 \alpha _2^2-\alpha _1 \alpha _2+2 \alpha _4 \beta _1 \alpha _2-3 \alpha _2-\alpha _1 \alpha _4+\alpha _4 \beta _1-1&=0, \\
 4 \alpha _2^3+3 \alpha _2^2-4 \alpha _1 \alpha _4 \alpha _2-2 \alpha _4 \alpha _2-3 \alpha _1 \alpha _4-\alpha _4&=0, \\
 \alpha _1 \alpha _2^2-\alpha _2^2-\alpha _1 \alpha _2+\alpha _4 \beta _1 \alpha _2-\alpha _2+\alpha _1-\alpha _1 \alpha _4-1 &=0,\\

 2 \alpha _2^3-2 \alpha _1 \alpha _4 \alpha _2-\alpha _4 \alpha _2-\alpha _4 &=0,\\
 \alpha _2^2-\alpha _1 \alpha _4 \alpha _2+2 \alpha _4 \alpha _2+\alpha _4-\alpha _4^2 \beta _1&=0, \\
  \end{array}$

 $\begin{array}{rr}	
 2 \alpha _4 \alpha _2^2-2 \alpha _1 \alpha _4^2-\alpha _4^2 &=0,\\
 2 \beta _1 \alpha _1^2-3 \beta _1 \alpha _1+2 \alpha _2 \beta _1^2+2 \beta _1^2+2 \beta _1 &=0,\\
 \alpha _1^2+\alpha _2 \beta _1 \alpha _1-\beta _1 \alpha _1-3 \alpha _1+\alpha _4 \beta _1^2-\alpha _2 \beta _1+2 \beta _1+1&=0, \\
 2 \alpha _1^3-7 \alpha _1^2+2 \alpha _2 \beta _1 \alpha _1+2 \beta _1 \alpha _1+5 \alpha _1-4 \alpha _2 \beta _1-2 \beta _1-1&=0, \\
 \alpha _2 \alpha _1^2-4 \alpha _2 \alpha _1+\alpha _4 \beta _1 \alpha _1+\alpha _1+3 \alpha _2+\alpha _2 \beta _1-2 \alpha _4 \beta _1 &=0,\\
 4 \alpha _1^3-3 \alpha _1^2+4 \alpha _2 \beta _1 \alpha _1+4 \beta _1 \alpha _1+\alpha _1+3 \alpha _2 \beta _1+2 \beta _1&=0, \\
 2 \alpha _2 \alpha _1^2-\alpha _1^2-3 \alpha _2 \alpha _1+2 \alpha _4 \beta _1 \alpha _1+2 \alpha _1+3 \alpha _2+\alpha _2 \beta _1+\alpha _4 \beta _1 &=0,\\
 4 \alpha _2 \alpha _1^2-\alpha _1^2-7 \alpha _2 \alpha _1-2 \alpha _1+2 \alpha _2+4 \alpha _2^2 \beta _1+3 \alpha _2 \beta _1-\alpha _4 \beta _1+\beta _1+1 &=0,\\
 2 \alpha _1 \alpha _2^2-5 \alpha _2^2-\alpha _1 \alpha _2+2 \alpha _4 \beta _1 \alpha _2+\alpha _1 \alpha _4+\alpha _4+\alpha _4 \beta _1&=0, \\
 2 \alpha _2 \alpha _1^2+\alpha _1^2-2 \alpha _2 \alpha _1-3 \alpha _1+2 \alpha _2^2 \beta _1+3 \alpha _2 \beta _1+\alpha _4 \beta _1+\beta _1+1 &=0, \\
 \alpha _1 \alpha _2^2+\alpha _4 \beta _1 \alpha _2-2 \alpha _1 \alpha _4+\alpha _4+\alpha _4 \beta _1&=0, \\
 2 \alpha _4 \alpha _1^2-\alpha _2 \alpha _1-2 \alpha _4 \alpha _1-\alpha _1-\alpha _2^2-2 \alpha _2+2 \alpha _2 \alpha _4 \beta _1+\alpha _4 \beta _1 &=0,\\
 \beta _1 \alpha _4^2-\alpha _1 \alpha _4+\alpha _1 \alpha _2 \alpha _4-2 \alpha _2 \alpha _4 &=0. \\
\end{array}$
%\end{equation*}

The last equation of the system is written  $\alpha_4(\beta _1 \alpha _4-\alpha _1 +\alpha _1 \alpha _2 -2 \alpha _2 )=0$ from that we get the following options:
\begin{itemize}
\item[Case 1] If $\alpha_4=0$ then from the equation 4 we have $\alpha_2=0$ or $\alpha_2=-\frac{3}{4}.$
\begin{itemize}
\item If $\alpha_2=0$ then from the equations 16 and 1 we get $\alpha_1=0,\ \beta_1=0$ which implies a contradiction with the equation 2.
\item If $\alpha_2=-\frac{3}{4}$ then applying the equations 22 and 1 we get $\alpha_1=0,\ \beta_1=0$ which implies a contradiction with the equation 2.
\end{itemize}
\item[Case 2] If $\alpha_4\neq 0$ then $-\beta _1 \alpha _4+\alpha _1 -\alpha _1 \alpha _2 +2 \alpha _2 =0$ and the equation 1 leads to $\beta_1=0$. Then the equation 14 implies $\alpha_1^2- 3 \alpha_1 +1=0$ (so $\alpha_1$ cannot be 0). Now using the equation 12 one has $4\alpha_1^3- 3 \alpha_1^2 +\alpha_1= \alpha_1(4\alpha_1^2- 3 \alpha_1 +1)=\alpha_1(3\alpha_1^2)=0$ hence, $\alpha_1 =0$ which is a contradiction, therefore $A_1$ is not a Malcev algebra.
\end{itemize}
Substituting the structure constants of $A_2$ into the general system we derive the system equations
\begin{equation*}\begin{array}{rr}
%1)
\alpha _1^3+\beta _2 \alpha _1^2-3 \alpha _1^2-\beta _2 \alpha _1+3 \alpha _1-\beta _1^2+\beta _2^2-1=0,&\\
%2)
 2 \alpha _1^3-5 \alpha _1^2+2 \beta _2 \alpha _1+4 \alpha _1-\beta _2-1 =0,&\\
 %3)
 \alpha _1^2+3 \beta _2 \alpha _1-2 \alpha _1+2 \beta _2^2-3 \beta _2+1=0,& \\
 %4)
\alpha _1^3-3 \beta _2 \alpha _1^2-2 \alpha _1^2+\beta _2 \alpha _1+\alpha _1=0, &\\
%5)
 2 \beta _1 \alpha _1^2-4 \beta _1 \alpha _1+2 \beta _1-\beta _1 \beta _2=0, &\\
%6)
 2 \alpha _1^2-\beta _2 \alpha _1+\alpha _1+\beta _2^2-1=0, &\\

%7)
 \beta _2^2-\alpha _1 \beta _2+\beta _2-\alpha _1=0, &\\
  %8)
 \alpha _1^2-2 \alpha _1-\beta _2^2+1=0, &\\
%9)
 \beta _2^2+3 \alpha _1 \beta _2-2 \beta _2=0. & \\
 %10)
 \alpha _1 \beta _1-\beta _2 \beta _1+\beta _1=0, &\\
%11)
 \alpha _1 \beta _1+\beta _2 \beta _1-\beta _1=0, &\\
 %12)
 \alpha _1^2+\beta _2 \alpha _1-\alpha _1=0, &\\
 %13)
 2 \beta _1-\alpha _1 \beta _1=0, &\\
%14)
 \alpha _1 \beta _1-\beta _1 \beta _2=0,& \\
%15)
 \beta _2 \beta _1+\beta _1=0, &\\
 %16)
 2 \beta _2-1=0, &\\
  %17)
 \beta _1=0,& \\

\end{array}
\end{equation*} It is easy to see the solution $\beta _1=0, \  \beta _2=\frac{1}{2}$ and $\alpha _1=\frac{1}{2}$ to the system.

For $A_3$ the general system has the form
\begin{equation*}\begin{array}{rrrr}
 2 \beta _1-\beta _1 \beta _2=0,& \ \
 \beta _2^2+2 \beta _1-1 &=0,\\
 \beta _2+1=0, &\ \
 \beta _2+3 &=0,\\
 \beta _2-1=0, &\ \
 \beta _2-3&=0, \\
 4=0, &\ \
 -1 &=0,\\
 2=0, &\ \
 2 \beta _1^2-\beta _2 \beta _1+2 \beta _1&=0, \\
 \beta _2^2+\beta _1-1=0,& \ \
 4 \beta _1+\beta _2+1&=0, \\
 \beta _2^2-2 \beta _2+3=0,&\ \
 3 \beta _1 \beta _2-3 \beta _1&=0,\\
 \beta _2^2-\beta _2-4 \beta _1-2=0, &\ \
 5-3 \beta _2&=0, \\
 \beta _2^2-2 \beta _2-2 \beta _1=0,&\ \
 \beta _2 &=0. \\
\end{array}
\end{equation*} which is a contradiction.

For $A_4$ the system has the form
\begin{equation*}
\begin{array}{rr}
 \alpha _1^3+\beta _2 \alpha _1^2-3 \alpha _1^2-\beta _2 \alpha _1+3 \alpha _1+\beta _2^2-1&=0, \\
 2 \alpha _1^3-5 \alpha _1^2+2 \beta _2 \alpha _1+4 \alpha _1-\beta _2-1&=0, \\
 \alpha _1^3-3 \beta _2 \alpha _1^2-2 \alpha _1^2+\beta _2 \alpha _1+\alpha _1 &=0.\\
\end{array}
\end{equation*} The equation 3 is written as follows
\[\alpha_1(\alpha_1^{2}-3\alpha_1\beta_2-2 \alpha_1+\beta_2+1)=0\]
\begin{itemize}
\item[Case 1:] If $\alpha _1=0$ the equation 2 implies $\beta_2=-1$ therefore, $A_4(0,-1)$ is a Malcev algebra.
\item[Case 2:] If $\alpha_1 \neq 0$ then $\alpha_1^{2}-3\alpha_1\beta_2-2 \alpha_1+\beta_2+1=0$  and the equation 2 gives
$$2 \alpha _1^3-5 \alpha _1^2+2 \alpha _1(\beta _2+1) +2 \alpha _1-(\beta _2+1)=0$$ from where we get
$2 \alpha _1^3-5 \alpha _1^2+2 \alpha _1-(\beta _2+1)(2 \alpha _1-1)=0.$ Hence, $3 \beta _2\alpha_1(-2 \alpha _1+1)=0$ and we obtain $\beta_2=0,\ \alpha_1=1$ or $\alpha _1=\frac{1}{2}, \ \beta _2=\frac{1}{2}.$
\end{itemize}
Substutiting the structure constants of algebras $A_5$, $A_6$, $A_7$ $A_9,$ $A_{10}$ and $A_{11}$ we can easily see that the systems obtained are contradictions.
%\begin{equation*}
%\begin{array}{rr}
% 2 \alpha _1^2-6 \alpha _1+3 &=0,\\
% 2 \alpha _1^2-3 \alpha _1^3 &=0,\\
% \alpha _1^2-2 \alpha _1^3&=0, \\
% 5 \alpha _1^3-3 \alpha _1^2 &=0,\\
%\end{array}
%\end{equation*}
%%We get $\alpha_1=0$ which contradicts with the equation 1 of the system.
%%
%%For $A_6$ the system has the form
%\begin{equation*}\begin{array}{rrrr}
% -\beta _1=0, &\ \
% 2 \alpha _1 \beta _1-\beta _1&=0, \\
% \alpha _1 \beta _1-\beta _1=0, &\ \
% -2 \alpha _1^2+3 \alpha _1-1&=0, \\
% -4 \alpha _1^2+3 \alpha _1-1=0, &\ \
% \alpha _1 &=0,\\
% \beta _1=0, &\ \
% 2-2 \alpha _1 &=0,\\
% 2 \alpha _1^2 \beta _1=0, &\ \
% \beta _1^2-\alpha _1^2 &=0,\\
% -2 \alpha _1^3=0, &\ \
% -\alpha _1 \beta _1&=0, \\
% 3 \alpha _1^2-4 \alpha _1^3=0,& \ \
% \alpha _1-2 &=0,\\
% 1-2 \alpha _1 =0,&\ \
% -2 \alpha _1^2+\alpha _1+1&=0.\\ \end{array} \end{equation*}
% and
%\begin{equation*}\begin{array}{rr}
% -\beta _1&=0, \\
% -2 \beta _1-1 &=0,\\
% 1 &=0,\\
% 2 \beta _1^2 &=0,\\
% 1-4 \beta _1 &=0,\\
% 1-2 \beta _1 &=0.\\ \end{array} \end{equation*}, respectively. Clear that they are contradictions.

As for the algebras $A_8$ the system has the solution
% (\ref{SEM}) will be
%\begin{equation*}\begin{array}{cc}
% -\alpha _1^2 &=0,\\
% -2 \alpha _1^3&=0, \\
% 3 \alpha _1^2-4 \alpha _1^3 &=0.\\ \end{array} \end{equation*} holds true where
$\alpha _1=0.$
%For $A_9, \ A_{10},\ A_{11}$ the system (\ref{SEM}) not hold true where it holds true
Finally, the set of structure constants of $ A_{12}$ satisfies the general system of equations.

 \item $I_{24}$ case. $A(A(A\otimes I)\otimes I)((u\otimes v\otimes w+v\otimes w\otimes u+w\otimes u\otimes v)\otimes u)$\\ $=-A(A\otimes A)(u\otimes v\otimes u\otimes w)-A(A(I\otimes A)\otimes I)(v\otimes u\otimes w\otimes u)-A(A(A\otimes I)\otimes I)(u\otimes w\otimes u\otimes v).$\\
Here is the system of equations in terms of structure constants\\
 $\begin{array}{rrr}
 %1)
 	4 \alpha _4 \alpha _1^2+5 \alpha _2^2 \alpha _1+3 \alpha _2 \alpha _3 \alpha _1+3 \alpha _4 \beta _2 \alpha _1+\alpha _4 \beta _3 \alpha _1+2 \alpha _2 \beta _4 \alpha _1+2 \alpha _3 \beta _4 \alpha _1&&\\
 	+\alpha _4 \beta _2^2+\alpha _4 \beta _3^2+3 \alpha _2 \alpha _4 \beta _1+5 \alpha _3 \alpha _4 \beta _1+\alpha _2^2 \beta _2+\alpha _3^2 \beta _2+4 \alpha _2 \alpha _3 \beta _2&&\\+2 \alpha _2 \alpha _3 \beta _3+2 \alpha _4 \beta _2 \beta _3+2 \alpha _4 \beta _1 \beta _4+3 \alpha _3 \beta _2 \beta _4+3 \alpha _3 \beta _3 \beta _4 &=0,&\\
 %2)
 	2 \alpha _4 \alpha _1^2+4 \alpha _2^2 \alpha _1+\alpha _3^2 \alpha _1+5 \alpha _2 \alpha _3 \alpha _1+3 \alpha _4 \beta _2 \alpha _1+3 \alpha _4 \beta _3 \alpha _1+2 \alpha _3 \beta _4 \alpha _1&&\\
 	+2 \alpha _4 \beta _3^2+5 \alpha _2 \alpha _4 \beta _1+3 \alpha _3 \alpha _4 \beta _1+2 \alpha _3^2 \beta _2+2 \alpha _2 \alpha _3 \beta _2+\alpha _2^2 \beta _3+\alpha _3^2 \beta _3&&\\+2 \alpha _2 \alpha _3 \beta _3+2 \alpha _4 \beta _2 \beta _3+4 \alpha _4 \beta _1 \beta _4+\alpha _3 \beta _2 \beta _4+3 \alpha _3 \beta _3 \beta _4&=0,& \\
  %3)
 	2 \beta _1 \alpha _2^2+\beta _2^2 \alpha _2+\beta _3^2 \alpha _2+2 \alpha _3 \beta _1 \alpha _2+3 \alpha _1 \beta _2 \alpha _2+2 \beta _2 \beta _3 \alpha _2+3 \beta _1 \beta _4 \alpha _2+2 \alpha _3 \beta _2^2&&\\
 	+2 \beta _1 \beta _4^2+4 \alpha _1 \alpha _4 \beta _1+\alpha _1 \alpha _3 \beta _2+3 \alpha _4 \beta _1 \beta _2+5 \alpha _4 \beta _1 \beta _3+2 \alpha _3 \beta _2 \beta _3+\beta _2^2 \beta _4&&\\+4 \beta _3^2 \beta _4+3 \alpha _3 \beta _1 \beta _4+2 \alpha _1 \beta _2 \beta _4+5 \beta _2 \beta _3 \beta _4&=0,&\\
 %4)
 	\beta _1 \alpha _2^2+2 \alpha _3 \beta _1 \alpha _2+3 \alpha _1 \beta _2 \alpha _2+2 \beta _2 \beta _3 \alpha _2+\beta _1 \beta _4 \alpha _2+\alpha _3 \beta _2^2+\alpha _3 \beta _3^2+4 \beta _1 \beta _4^2&&\\
 	+\alpha _3^2 \beta _1+2 \alpha _1 \alpha _4 \beta _1+3 \alpha _1 \alpha _3 \beta _2+5 \alpha _4 \beta _1 \beta _2+3 \alpha _4 \beta _1 \beta _3+4 \alpha _3 \beta _2 \beta _3&&\\+5 \beta _3^2 \beta _4+3 \alpha _3 \beta _1 \beta _4+2 \alpha _1 \beta _2 \beta _4+2 \alpha _1 \beta _3 \beta _4+3 \beta _2 \beta _3 \beta _4 &=0,&\\
 \end{array}$

   $\begin{array}{rrr}
 %5)
 	\beta _1 \alpha _3^2+\beta _2^2 \alpha _3+\beta _3^2 \alpha _3+\alpha _2 \beta _1 \alpha _3+2 \alpha _1 \beta _2 \alpha _3+3 \beta _2 \beta _3 \alpha _3+2 \beta _1 \beta _4 \alpha _3+\beta _1 \beta _4^2&&\\
 	+2 \alpha _1 \alpha _2 \beta _2+\alpha _4 \beta _1 \beta _2+\alpha _4 \beta _1 \beta _3+\alpha _2 \beta _2 \beta _3+3 \beta _3^2 \beta _4+\alpha _2 \beta _1 \beta _4&&\\+\alpha _1 \beta _2 \beta _4+2 \beta _2 \beta _3 \beta _4&=0,& \\
 	
 %6)
 	2 \alpha _1 \alpha _2^2+\beta _3 \alpha _2^2+3 \alpha _1 \alpha _3 \alpha _2+2 \alpha _4 \beta _1 \alpha _2+\alpha _3 \beta _2 \alpha _2+2 \alpha _3 \beta _3 \alpha _2+\alpha _1 \alpha _3^2+2 \alpha _4 \beta _3^2&&\\
 	+2 \alpha _3 \alpha _4 \beta _1+\alpha _3^2 \beta _2+\alpha _1 \alpha _4 \beta _2+\alpha _3^2 \beta _3+\alpha _1 \alpha _4 \beta _3+2 \alpha _4 \beta _2 \beta _3&&\\+\alpha _4 \beta _1 \beta _4+\alpha _3 \beta _3 \beta _4 &=0,&\\
 %7)
 	2 \beta _1 \alpha _2^2+\beta _2^2 \alpha _2+\beta _3^2 \alpha _2+2 \alpha _3 \beta _1 \alpha _2+\alpha _1 \beta _2 \alpha _2+2 \beta _2 \beta _3 \alpha _2+\beta _1 \beta _4 \alpha _2+\alpha _3 \beta _2^2&&\\
 	+\alpha _1 \alpha _4 \beta _1+2 \alpha _4 \beta _1 \beta _2+2 \alpha _4 \beta _1 \beta _3+\alpha _3 \beta _2 \beta _3+\beta _2^2 \beta _4+2 \beta _3^2 \beta _4&&\\+\alpha _3 \beta _1 \beta _4+3 \beta _2 \beta _3 \beta _4&=0,& \\

 %8)
 	\alpha _4 \alpha _1^2+3 \alpha _2^2 \alpha _1+2 \alpha _2 \alpha _3 \alpha _1+2 \alpha _4 \beta _2 \alpha _1+\alpha _4 \beta _3 \alpha _1+\alpha _3 \beta _4 \alpha _1+\alpha _4 \beta _2^2+\alpha _2 \alpha _4 \beta _1&&\\
 	+\alpha _3 \alpha _4 \beta _1+\alpha _2^2 \beta _2+\alpha _3^2 \beta _2+3 \alpha _2 \alpha _3 \beta _2+\alpha _2 \alpha _3 \beta _3+\alpha _4 \beta _2 \beta _3&&\\+2 \alpha _3 \beta _2 \beta _4+2 \alpha _3 \beta _3 \beta _4 &=0,&\\
 %9)
 	2 \alpha _2^3+2 \alpha _3 \alpha _2^2+\beta _4 \alpha _2^2+7 \alpha _1 \alpha _4 \alpha _2+6 \alpha _4 \beta _2 \alpha _2+2 \alpha _4 \beta _3 \alpha _2+3 \alpha _3 \beta _4 \alpha _2+4 \alpha _3 \beta _4^2&&\\
 	+\alpha _1 \alpha _3 \alpha _4+2 \alpha _4^2 \beta _1+5 \alpha _3 \alpha _4 \beta _2+\alpha _3 \alpha _4 \beta _3+4 \alpha _1 \alpha _4 \beta _4&&\\+5 \alpha _4 \beta _2 \beta _4+3 \alpha _4 \beta _3 \beta _4 &=0,&\\
 %10)
 	\beta _3^3+\alpha _1 \beta _3^2+\beta _2 \beta _3^2+\alpha _2 \beta _1 \beta _3+2 \alpha _3 \beta _1 \beta _3+2 \alpha _1 \beta _2 \beta _3+3 \beta _1 \beta _4 \beta _3+2 \alpha _4 \beta _1^2&&\\
 	+3 \alpha _1 \alpha _2 \beta _1+2 \alpha _1 \alpha _3 \beta _1+\alpha _1^2 \beta _2+\alpha _2 \beta _1 \beta _2+2 \alpha _3 \beta _1 \beta _2&&\\+\alpha _1 \beta _1 \beta _4+\beta _1 \beta _2 \beta _4 &=0,&\\

 %11)
 	\alpha _2^3+\alpha _3 \alpha _2^2+\beta _4 \alpha _2^2+3 \alpha _1 \alpha _4 \alpha _2+2 \alpha _4 \beta _2 \alpha _2+\alpha _4 \beta _3 \alpha _2+2 \alpha _3 \beta _4 \alpha _2+\alpha _3 \beta _4^2&&\\
 	+\alpha _1 \alpha _3 \alpha _4+2 \alpha _4^2 \beta _1+2 \alpha _3 \alpha _4 \beta _2+\alpha _3 \alpha _4 \beta _3+\alpha _1 \alpha _4 \beta _4&&\\+2 \alpha _4 \beta _2 \beta _4+3 \alpha _4 \beta _3 \beta _4&=0,& \\
 %12)
 	2 \beta _3^3+\alpha _1 \beta _3^2+2 \beta _2 \beta _3^2+2 \alpha _2 \beta _1 \beta _3+6 \alpha _3 \beta _1 \beta _3+3 \alpha _1 \beta _2 \beta _3+7 \beta _1 \beta _4 \beta _3+2 \alpha _4 \beta _1^2&&\\
 	+3 \alpha _1 \alpha _2 \beta _1+5 \alpha _1 \alpha _3 \beta _1+4 \alpha _1^2 \beta _2+\alpha _2 \beta _1 \beta _2+5 \alpha _3 \beta _1 \beta _2&&\\+4 \alpha _1 \beta _1 \beta _4+\beta _1 \beta _2 \beta _4 &=0,&\\
  %13)
 	2 \beta _2 \alpha _2^2+5 \alpha _4 \beta _1 \alpha _2+2 \alpha _3 \beta _2 \alpha _2+3 \beta _2 \beta _4 \alpha _2+2 \beta _3 \beta _4 \alpha _2+4 \alpha _4 \beta _2^2+\alpha _4 \beta _3^2+5 \beta _2 \beta _4^2&&\\
 	+7 \beta _3 \beta _4^2+\alpha _3 \alpha _4 \beta _1+2 \alpha _1 \alpha _4 \beta _2+5 \alpha _4 \beta _2 \beta _3+6 \alpha _4 \beta _1 \beta _4+3 \alpha _3 \beta _2 \beta _4 &=0,&\\
 %14)
 	7 \alpha _2 \alpha _1^2+5 \alpha _3 \alpha _1^2+6 \alpha _4 \beta _1 \alpha _1+3 \alpha _3 \beta _2 \alpha _1+2 \alpha _2 \beta _3 \alpha _1+3 \alpha _3 \beta _3 \alpha _1+2 \alpha _3 \beta _3^2+\alpha _2^2 \beta _1&&\\
 	+4 \alpha _3^2 \beta _1+5 \alpha _2 \alpha _3 \beta _1+\alpha _4 \beta _1 \beta _2+5 \alpha _4 \beta _1 \beta _3+2 \alpha _3 \beta _2 \beta _3+2 \alpha _3 \beta _1 \beta _4 &=0,&\\
 %15)
 	\alpha _2^3+2 \alpha _3 \alpha _2^2+\alpha _3^2 \alpha _2+\alpha _1 \alpha _4 \alpha _2+2 \alpha _4 \beta _2 \alpha _2+4 \alpha _4 \beta _3 \alpha _2+\alpha _3 \beta _4 \alpha _2+\alpha _1 \alpha _3 \alpha _4&&\\
 	+\alpha _4^2 \beta _1+2 \alpha _3 \alpha _4 \beta _2+\alpha _3 \alpha _4 \beta _3+\alpha _3^2 \beta _4+\alpha _4 \beta _2 \beta _4+5 \alpha _4 \beta _3 \beta _4&=0,& \\
  %16)
 	\beta _3^3+2 \beta _2 \beta _3^2+\beta _2^2 \beta _3+4 \alpha _2 \beta _1 \beta _3+2 \alpha _3 \beta _1 \beta _3+\alpha _1 \beta _2 \beta _3+\beta _1 \beta _4 \beta _3+\alpha _4 \beta _1^2+\alpha _1 \beta _2^2&&\\
 	+5 \alpha _1 \alpha _2 \beta _1+\alpha _1 \alpha _3 \beta _1+\alpha _2 \beta _1 \beta _2+2 \alpha _3 \beta _1 \beta _2+\beta _1 \beta _2 \beta _4 &=0,&\\
  %\end{array}$
%
%  $\begin{array}{rrr}
  %17)
 	\beta _2 \alpha _2^2+\alpha _4 \beta _1 \alpha _2+\alpha _3 \beta _2 \alpha _2+2 \beta _2 \beta _4 \alpha _2+\alpha _4 \beta _2^2+\alpha _4 \beta _3^2+2 \beta _2 \beta _4^2+4 \beta _3 \beta _4^2+\alpha _3 \alpha _4 \beta _1&&\\
 	+2 \alpha _1 \alpha _4 \beta _2+2 \alpha _4 \beta _2 \beta _3+3 \alpha _4 \beta _1 \beta _4+3 \alpha _3 \beta _2 \beta _4 &=0,&\\
%18)
 	\beta _2 \alpha _2^2+2 \alpha _3 \beta _2 \alpha _2+\beta _2 \beta _4 \alpha _2+\alpha _4 \beta _2^2+\beta _2 \beta _4^2+5 \beta _3 \beta _4^2+\alpha _3 \alpha _4 \beta _1+\alpha _3^2 \beta _2+\alpha _1 \alpha _4 \beta _2&&\\
 	+4 \alpha _4 \beta _2 \beta _3+\alpha _4 \beta _1 \beta _4+3 \alpha _3 \beta _2 \beta _4+2 \alpha _3 \beta _3 \beta _4&=0,& \\
 %19)
 	5 \alpha _2 \alpha _1^2+\alpha _3 \alpha _1^2+\alpha _4 \beta _1 \alpha _1+2 \alpha _2 \beta _2 \alpha _1+3 \alpha _3 \beta _2 \alpha _1+\alpha _3 \beta _3 \alpha _1+\alpha _3 \beta _2^2+\alpha _3 \beta _3^2+\alpha _3^2 \beta _1&&\\
 	+4 \alpha _2 \alpha _3 \beta _1+\alpha _4 \beta _1 \beta _2+2 \alpha _3 \beta _2 \beta _3+\alpha _3 \beta _1 \beta _4 &=0,&\\
 %20)
 	4 \alpha _2 \alpha _1^2+2 \alpha _3 \alpha _1^2+3 \alpha _4 \beta _1 \alpha _1+3 \alpha _3 \beta _2 \alpha _1+2 \alpha _3 \beta _3 \alpha _1+\alpha _3 \beta _3^2+\alpha _2^2 \beta _1+\alpha _3^2 \beta _1&&\\
 	+2 \alpha _2 \alpha _3 \beta _1+\alpha _4 \beta _1 \beta _2+\alpha _4 \beta _1 \beta _3+\alpha _3 \beta _2 \beta _3+2 \alpha _3 \beta _1 \beta _4 &=0,&\\
 %21)
 	4 \alpha _4 \alpha _2^2+\alpha _3 \alpha _4 \alpha _2+6 \alpha _4 \beta _4 \alpha _2+\alpha _1 \alpha _4^2+6 \alpha _4 \beta _4^2+4 \alpha _4^2 \beta _2+\alpha _4^2 \beta _3+\alpha _3 \alpha _4 \beta _4 &=0,&\\
 %22)
 	6 \beta _1 \alpha _1^2+\beta _1 \beta _2 \alpha _1+6 \beta _1 \beta _3 \alpha _1+\alpha _2 \beta _1^2+4 \alpha _3 \beta _1^2+4 \beta _1 \beta _3^2+\beta _1 \beta _2 \beta _3+\beta _1^2 \beta _4&=0,& \\
 %23)
 	6 \beta _4^3+10 \alpha _4 \beta _2 \beta _4+2 \alpha _4 \beta _3 \beta _4+\alpha _4^2 \beta _1+4 \alpha _2 \alpha _4 \beta _2+\alpha _3 \alpha _4 \beta _2 &=0,&\\
 %24)
 	6 \alpha _1^3+2 \alpha _2 \beta _1 \alpha _1+10 \alpha _3 \beta _1 \alpha _1+\alpha _4 \beta _1^2+\alpha _3 \beta _1 \beta _2+4 \alpha _3 \beta _1 \beta _3 &=0.&\\

 \end{array}$

 \item $I_{25}$ case. $A(A\otimes I)(u\otimes v\otimes w)= A(I\otimes A )(u\otimes v\otimes w)+A(I\otimes A)(u\otimes w\otimes v).$\\
 The system of equations is
 \begin{center}
 $\begin{array}{rrrr}
 	\alpha _1 \alpha _3-\alpha _4 \beta _1+\alpha _2 \beta _2+\alpha _2 \beta _3&=0,&
 \alpha _1^2+2 \alpha _2 \beta _1-\alpha _3 \beta _1 &=0,\\
 	\alpha _1 \alpha _3-\beta _2 \alpha _3+\alpha _2 \beta _2+\alpha _2 \beta _3 &=0, &
 	\beta _2^2+\alpha _3 \beta _1 &=0,\\
 \alpha _3 \beta _2-\beta _4 \beta _2-\alpha _2 \beta _3-\alpha _3 \beta _3 &=0,&
 	\alpha _3^2+\alpha _4 \beta _2 &=0,\\
   	\alpha _2 \alpha _4-2 \alpha _3 \alpha _4-\beta _4 \alpha _4 &=0,&
 	\alpha _1 \beta _1+2 \beta _2 \beta _1-\beta _3 \beta _1 &=0,\\ 	
 	\alpha _2^2-2 \beta _4 \alpha _2-2 \alpha _1 \alpha _4+\alpha _4 \beta _2 &=0, &
 	2 \alpha _4 \beta _1-\alpha _2 \beta _2+\beta _2 \beta _4 &=0,\\	
 	\beta _3^2-2 \alpha _1 \beta _3+\alpha _3 \beta _1-2 \beta _1 \beta _4 &=0,&
 	\alpha _1 \alpha _3-\beta _3 \alpha _3+2 \alpha _4 \beta _1 &=0, \\
 	\alpha _4 \beta _1-\alpha _2 \beta _3-\alpha_3 \beta _3-\beta _2 \beta _4  &=0,&
 	\beta _4^2-\alpha _4 \beta _2+2 \alpha _4 \beta _3 &=0,\\
 \alpha _3^2+\alpha _2 \alpha _3-\beta _4 \alpha _3-\alpha _1 \alpha _4&& \beta _2^2-\alpha _1 \beta _2+\beta _3 \beta _2+\alpha _2 \beta _1&\\
 +\alpha _4 \beta _2+\alpha _4 \beta _3 &=0,& +\alpha _3 \beta _1-\beta _1 \beta _4 &=0.
 \end{array}$
\end{center}
 %$\begin{array}{rr}
% \begin{array}{rrr}
% 	\alpha _3^2+\alpha _2 \alpha _3-\beta _4 \alpha _3-\alpha _1 \alpha _4&&\\ \alpha _4 \beta _2+\alpha _4 \beta _3 &=0,&
% \end{array}
%  &
% \begin{array}{rrr}
% \beta _2^2-\alpha _1 \beta _2+\beta _3 \beta _2+\alpha _2 \beta _1&&\\ \alpha _3 \beta _1-\beta _1 \beta _4 &=0.&
% \end{array}
% \end{array}
 %$

  \item $I_{26}$ case.  $ A(A\otimes I)(u\otimes v\otimes w)= -A(I\otimes A )(u\otimes v\otimes w)-A(I\otimes A)(u\otimes w\otimes v).$\\
  The identity is equivalent to the system of equations\\
  $\begin{array}{rr}
  2 \alpha _1 \alpha _2+\beta _2 \alpha _2+\beta _3 \alpha _2+\alpha _1 \alpha _3+\alpha _4 \beta _1 =0,&
  \alpha _2^2+2 \beta _4 \alpha _2+2 \alpha _1 \alpha _4+\alpha _4 \beta _2 =0
  	,\\
  	2 \alpha _1 \alpha _2+\beta _2 \alpha _2+\beta _3 \alpha _2+\alpha _1 \alpha _3+\alpha _3 \beta _2 =0,&
  \beta _3^2+2 \alpha _1 \beta _3+\alpha _3 \beta _1+2 \beta _1 \beta _4 =0
  ,\\
  \alpha _3^2+2 \alpha _2 \alpha _3+\alpha _4 \beta _2+2 \alpha _4 \beta _3 =0, &
  	3 \alpha _1^2+2 \alpha _2 \beta _1+\alpha _3 \beta _1 =0 ,\\
  	\alpha _3^2+\alpha _2 \alpha _3+\beta _4 \alpha _3+\alpha _1 \alpha _4+\alpha _4 \beta _2+\alpha _4 \beta _3 =0, &
  	\alpha _2 \alpha _4+2 \alpha _3 \alpha _4+3 \beta _4 \alpha _4 =0, \\
  \beta _2^2+\alpha _1 \beta _2+\beta _3 \beta _2+\alpha _2 \beta _1+\alpha _3 \beta _1+\beta _1 \beta _4 =0,&
  	3 \alpha _1 \beta _1+2 \beta _2 \beta _1+\beta _3 \beta _1 =0,\\
  	\beta _2^2+2 \beta _3 \beta _2+2 \alpha _2 \beta _1+\alpha _3 \beta _1 =0,&
  	2 \alpha _4 \beta _1+\alpha _2 \beta _2+3 \beta _2 \beta _4 =0, \\
  	\alpha _3 \beta _2+\beta _4 \beta _2+\alpha _2 \beta _3+\alpha _3 \beta _3+2 \beta _3 \beta _4 =0,&
  	3 \alpha _1 \alpha _3+\beta _3 \alpha _3+2 \alpha _4 \beta _1 =0,\\
  	\alpha _4 \beta _1+\alpha _2 \beta _3+\alpha _3 \beta _3+\beta _2 \beta _4+2 \beta _3 \beta _4 =0,&
  	3 \beta _4^2+\alpha _4 \beta _2+2 \alpha _4 \beta _3  =0.
  \end{array}$

 \item $I_{27}$ case. $A((A\otimes I)-(I\otimes A))(u\otimes v\otimes w)=A((A\otimes I)-(I\otimes A))(v\otimes u\otimes w).$\\
 The identity can be written in terms of structure constants as a system of equations as follows\\
 $\begin{array}{rr}
 	\alpha _1 \alpha _2-\beta _3 \alpha _2-\alpha _1 \alpha _3+\alpha _4 \beta _1+\alpha _3 \beta _2-\alpha _3 \beta _3 =0,&
 	\alpha _2^2-\beta _4 \alpha _2-\alpha _1 \alpha _4+2 \alpha _4 \beta _2-\alpha _4 \beta _3 =0,\\
 \alpha _4 \beta _1-\alpha _2 \beta _2+\alpha _3 \beta _2-\alpha _2 \beta _3-\beta _2 \beta _4+\beta _3 \beta _4 =0,&
 	\beta _3^2-\alpha _1 \beta _3-\alpha _2 \beta _1+2 \alpha _3 \beta _1-\beta _1 \beta _4 =0.
 \end{array}$

 \item $I_{28}$ case. $A((A\otimes I)-(I\otimes A))(u\otimes v\otimes w)=-A((A\otimes I)-(I\otimes A))(v\otimes u\otimes w).$\\
 Here is the the system of equations\\
 $\begin{array}{rr}
 \alpha _4 \beta _1-\alpha _2 \beta _2-\alpha _3 \beta _2+\alpha _2 \beta _3+\beta _2 \beta _4-\beta _3 \beta _4 =0
 ,&
  \alpha _4 \beta _1- \alpha _2 \beta _2 =0,\\
 \alpha _1 \alpha _2-\beta _3 \alpha _2-\alpha _1 \alpha _3-\alpha _4 \beta _1+\alpha _3 \beta _2+\alpha _3 \beta _3 =0,&
  \alpha _4 \beta _1- \alpha _3 \beta _3=0 ,\\
  \alpha _3^2- \beta _4 \alpha _3- \alpha _1 \alpha _4+ \alpha _4 \beta _3=0, &
  \alpha _2 \alpha _4- \alpha _3 \alpha _4 =0,\\
  \beta _2^2- \alpha _1 \beta _2+ \alpha _2 \beta _1- \beta _1 \beta _4 =0,&
   \beta _1 \beta _3- \beta _1 \beta _2 =0,\\
 \beta _3^2-\alpha _1 \beta _3+\alpha _2 \beta _1-\beta _1 \beta _4 =0,&
  \alpha _3 \beta _1- \alpha _2 \beta _1 =0,\\
 \alpha _2^2-\beta _4 \alpha _2-\alpha _1 \alpha _4+\alpha _4 \beta _3 =0
 ,&
  \alpha _4 \beta _2- \alpha _4 \beta _3 =0. \\
 \end{array}$

 \item $I_{29}$ case. $A((A\otimes I)-(I\otimes A))(u\otimes v\otimes w)=A((A\otimes I)-(I\otimes A))(w\otimes v\otimes u).$\\
 The system of equations is\\
 $\begin{array}{rr}
 \alpha _2^2-\beta _4 \alpha _2+\alpha _3^2-2 \alpha _1 \alpha _4+\alpha _4 \beta _2+\alpha _4 \beta _3-\alpha _3 \beta _4 =0,&
 2 \alpha _4 \beta _1-\alpha _2 \beta _2-\alpha _3 \beta _3 =0,\\
 \beta _2^2-\alpha _1 \beta _2+\beta _3^2+\alpha _2 \beta _1+\alpha _3 \beta _1-\alpha _1 \beta _3-2 \beta _1 \beta _4 =0. &
 \end{array}$

 \item $I_{30}$ case. $A((A\otimes I)-(I\otimes A))(u\otimes v\otimes w)=-A((A\otimes I)-(I\otimes A))(w\otimes v\otimes u).$
 \end{itemize}
 One can write the identity above as a system of equations as follows
 \begin{center}
 $\begin{array}{rr}
 \alpha _2^2-\beta _4 \alpha _2-\alpha _3^2+\alpha _4 \beta _2-\alpha _4 \beta _3+\alpha _3 \beta _4 =0
 , &
 \alpha _3 \beta _3-\alpha _2 \beta _2 =0, \\
 \beta _2^2-\alpha _1 \beta _2-\beta _3^2+\alpha _2 \beta _1-\alpha _3 \beta _1+\alpha _1 \beta _3 =0,&
 \alpha _3 \beta _1- \alpha _2 \beta _1=0,\\
  \alpha _1 \alpha _2- \beta _3 \alpha _2- \alpha _1 \alpha _3+ \alpha _3 \beta _2 =0,&
 \alpha _2 \alpha _4- \alpha _3 \alpha _4 =0,\\
   \alpha _3 \beta _2-2 \beta _4 \beta _2- \alpha _2 \beta _3+ \beta _3 \beta _4  =0
 ,&
  \alpha _4 \beta _3-\alpha _4 \beta _2 =0,\\
 \beta _1 \beta _3- \beta _1 \beta _2 =0.&
 \end{array}$
 \end{center}
\end{proof}
\begin{rem}
Note that in the case of $I_{19}$ if $Char(\mathbb{F})= 5$ then $i=2$ and thus here we correct an inaccuracy admitted in Theorem 7 from \cite{B3}.
%case, removes a defect from Theorem 7 in \cite{B5} as far as in this case one can take $i=2$ and therefore $\frac{1}{3}=\frac{1}{2}-\frac{i}{2}, \frac{1}{2}+\frac{i}{2}= \frac{3}{2}$.
\end{rem}

The analogues of the result above for the fields of characteristic $2$ and $3$ can be easily proved following the same manner. Here we give final results without proof as the following two theorems below. In the case of the characteristic $2$ some of the identities coincide, this is also denoted by $\cong$.
 \begin{thm} Let $Char(\mathbb{F})= 2.$  Then the following classification of two-dimensional algebras over $\mathbb{F}$ with respect to the identities $I_1-I_{30}$ is valid.
 \begin{itemize}	
 	\item[$I_1\cong I_2$.] Commutativity identity $\mathbf{u}\mathbf{v}=\mathbf{v}\mathbf{u}$.\\
 	$A_{2,2}(\alpha_1,\beta_1,1+\alpha_1),A_{3,2}(\alpha_1,1+\alpha_1),$ $A_{4,2}(\alpha_1,1+\alpha_1),$ $ A_{5,2}(0)$, $A_{10,2},$ $A_{11,2},$ $A_{12,2}$.
 	
 	\item[$I_3\cong I_4$.] Associativity identity\\ 	$A_{3,2}(1,0),$ $ A_{4,2}(1,0),$ $ A_{4,2}\left(1,1\right),$ $A_{8,2}\left(1\right),$ $A_{10,2},$ $A_{12,2}$.
 	
  	\item[$I_5$.] Well defined cube identity $\mathbf{u}^2\mathbf{u}=\mathbf{u}\mathbf{u}^2.$\\
 	 $A_{1,2}\left(1,1,0,0\right),\ A_{2,2}\left(\alpha _1,\beta _1,1+\alpha_1\right)$,\ $A_{3,2}\left(\alpha _1,1-\alpha _1\right),$ $A_{4,2}\left(\alpha _1,1+\alpha_1\right),\ \mbox{where}\ \alpha_1 \neq 0,$\\
 	 $	\ A_{4,2}\left(\alpha _1,1\right),$ $A_{5,2}(0),$ $A_{8,2}\left(1\right),$ $A_{10,2},$ $A_{11,2},$ $ A_{12,2}.$
 	 	
 	\item[$I_6\cong I_7$.] Half-commutativity identity $[\mathbf{u},\mathbf{v}]\mathbf{w}=\mathbf{w}[\mathbf{u},\mathbf{v}].$\\
 	$A_{2,2}(\alpha_1,\beta_1,1-\alpha_1),$ $ A_{3,2}(\alpha _1,1+\alpha _1),$ $A_{4,2}(\alpha_1,1-\alpha_1),$ $ A_{5,2}(0),$ $ A_{10,2},$ $A_{11,2},$ $A_{12,2}$.
 	
 	\item[$I_8\cong I_9$.] Mixed associativity identity $[\mathbf{u},\mathbf{v}]\mathbf{w}=\mathbf{u}[\mathbf{v},\mathbf{w}].$\\
 		$A_{2,2}(\alpha_1,\beta_1,1+\alpha_1),$ $A_{3,2}(\alpha _1,1+\alpha _1),$ $A_{4,2}(\alpha_1,1+\alpha_1),$ $ A_{5,2}(0),$ $ A_{10,2},$ $ A_{11,2},$ $A_{12,2}$.
 	
 	 	\item[$I_{10}\cong I_{11}$.] Flexibility identity $\mathbf{u}(\mathbf{v}\mathbf{u})=(\mathbf{u}\mathbf{v})\mathbf{u}$.\\
 	$A_{2,2}(\alpha_1,\beta_1,1+\alpha_1),$ $A_{3,2}(\alpha_1,1+\alpha_1),$ $A_{4,2}(\alpha_1,1+\alpha_1),$ $A_{4,2}(\alpha_1,1)$, where $\alpha_1\neq 0,$ $A_{5,2}(0)$,\\ $A_{8,2}(1),$ $A_{10,2},$ $A_{11,2},$ $A_{12,2}$.
 	
  	\item[$I_{12}\cong I_{13}$.] Mixed flexibility identity $\mathbf{u}[\mathbf{v},\mathbf{u}]=[\mathbf{u},\mathbf{v}]\mathbf{u}$.\\
 	$A_{2,2}(\alpha_1,\beta_1,1-\alpha_1)$, $A_{3,2}(\alpha_1,1+\alpha_1)$, $A_{4,2}(\alpha_1,1+\alpha_1)$, $ A_{5,2}(0)$, $A_{10,2}$, $A_{11,2}$, $A_{12,2}$.
 	
 	 	\item[$I_{14}\cong I_{15}$.] Left Leibniz identity $\mathbf{u}(\mathbf{v}\mathbf{w})=(\mathbf{u}\mathbf{v})\mathbf{w}+\mathbf{v}(\mathbf{u}\mathbf{w})$.\\
 	$A_{4,2}(0,1),$ $ A_{8,2}(0),$ $ A_{12,2}$.
 	
 	\item[$I_{16}\cong I_{17}$.] Mixed left Leibniz identity $\mathbf{u}[\mathbf{v},\mathbf{w}]=[\mathbf{u},\mathbf{v}]\mathbf{w}+\mathbf{v}[\mathbf{u},\mathbf{\mathbf{w}}]$.\\
 	$A_{2,2}(\alpha_1,\beta_1,1+\alpha_1),$ $A_{3,2}(\alpha_1,1+\alpha_1),$ $ A_{4,2}(\alpha_1,1+\alpha_1),$ $ A_{5,2}(0),$ $ A_{10,2},$ $A_{11,2},$ $A_{12,2}$.
 	
 	\item[$I_{18}$.] Left Poisson identity $(\mathbf{u}\mathbf{v})\mathbf{w}+(\mathbf{v}\mathbf{w})\mathbf{u}+(\mathbf{w}\mathbf{u})\mathbf{v}=0$.\\
 	$ A_{4,2}(0,1), $ $A_{5,2}(0),$ $ A_{8,2}(0),$ $ A_{12,2}$.
 	
 	\item[$I_{19}\cong I_{20}$.] Left Jordan identity $(\mathbf{u}\mathbf{v})\mathbf{u}^2=\mathbf{u}(\mathbf{v}\mathbf{u}^2)$.\\  $A_{3,2}(0,0),\ A_{3,2}(1,0),\ A_{4,2}(\alpha_1,1),\ A_{4,2}\left(\alpha_1,\sqrt{\alpha _1+\alpha _1^2}\right),\ \mbox{where}\ \alpha _1^2+\alpha_1+1\neq 0,\
 	A_{5,2}(\alpha_1),$ where $\alpha _1^2+\alpha_1+1=0,$ \ \
$A_{8,2}(1),\ A_{10,2},\ A_{12,2}.$
 	
  	\item[$I_{21}\cong I_{22}$.] Mixed left Jordan identity $[\mathbf{u},\mathbf{v}]\mathbf{u}^2=\mathbf{u}[\mathbf{v},\mathbf{u}^2].$\\
 	$A_{2,2}(\alpha_1,\beta_1,1+\alpha_1),$ $A_{3,2}(\alpha_1,1+\alpha_1),$ $ A_{4,2}(\alpha_1,1+\alpha_1),$ $A_{4,2}(0,0),$ $ A_{5,2}(0),$ $ A_{10,2},$ $A_{11,2},$ $A_{12,2}$.
 	
 \item[$I_{23}\cong I_{24}$.] Left Malcev identity $((\mathbf{u}\mathbf{v})\mathbf{w}+(\mathbf{v}\mathbf{w})\mathbf{u}+(\mathbf{w}\mathbf{u})\mathbf{v})\mathbf{u}=(\mathbf{u}\mathbf{v})(\mathbf{u}\mathbf{w})+(\mathbf{v}(\mathbf{u}\mathbf{w}))\mathbf{u} +((\mathbf{u}\mathbf{w})\mathbf{u})\mathbf{v}$.\\
 	$A_{3,2}(1,0),$ $ A_{4,2}(1,0),$ $ A_{4,2}(0,1), $ $ A_{8,2}(0), $ $ A_{10,2},$ $A_{12,2}$.
 	
 	\item[$I_{25}\cong I_{26}$.] Left Zinbiel identity $ (\mathbf{u}\mathbf{v})\mathbf{w}=\mathbf{u}(\mathbf{v}\mathbf{w}+\mathbf{w}\mathbf{v})$.\ \
 	$ A_{12,2}$.
 	
  	\item[$I_{27}\cong I_{28}$.] Left symmetric identity $[\mathbf{u},\mathbf{v},\mathbf{w}]=[\mathbf{v},\mathbf{u},\mathbf{w}]$.\\
 	$A_{3,2}(1,0),$ $A_{4,2}(1,\beta_2),$ $ A_{5,2}(1),$ $ A_{6,2}(\alpha_1,0),$ $A_{7,2}(1),$ $A_{8,2}(\alpha_1),$ $ A_{9,2},$ $A_{10,2},$ $A_{12,2}$.
 	
 \item[$I_{29}\cong I_{30}$.] Centro-symmetric identity $[\mathbf{u},\mathbf{v},\mathbf{w}]=[\mathbf{w},\mathbf{v},\mathbf{u}]$. \\  $A_{1,2}(\alpha _1, 1+\alpha _1,\alpha _1,1+\alpha _1),$ $A_{2,2}(\alpha _1,\beta_1,1+\alpha_1),$ $A_{3,2}(\alpha_1,1+\alpha_1),$ $A_{4,2}(\alpha_1,1+\alpha_1),$ $A_{4,2}(\alpha_1,1),\\ \mbox{where}\ \alpha_1\neq 0,$ $A_{5,2}(\alpha_1),$ $ A_{8,2}(1),$ $A_{9,2},$ $ A_{10,2},$ $ A_{11,2},$ $A_{12,2}.$
 \end{itemize}	
 	 \end{thm}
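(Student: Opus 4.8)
The plan is to mimic, step by step, the argument given for the characteristic $\neq 2,3$ theorem. For each identity $I_k$ one first rewrites it as a matrix identity in the MSC $A$, using Kronecker products and the second order identity matrix $I$ exactly as before — associativity becomes $A(A\otimes I)=A(I\otimes A)$, the left Poisson identity becomes $A(A\otimes I)(u\otimes v\otimes w+v\otimes w\otimes u+w\otimes u\otimes v)=0$, and so on — and then reads off the induced system of polynomial equations in the eight structure constants $\alpha_1,\dots,\alpha_4,\beta_1,\dots,\beta_4$. The only genuinely new feature at this stage is that all coefficients are reduced modulo $2$. A pleasant consequence is that the sign distinguishing an ``anti'' identity from its ordinary version disappears, so that $I_1$ and $I_2$, $I_3$ and $I_4$, $I_6$ and $I_7$, $\dots$, $I_{29}$ and $I_{30}$ give literally the same matrix equation — this is exactly the content of the markers $\cong$ in the statement — and the thirty cases therefore collapse to the sixteen distinct systems recorded above, each of which I would write out once.

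Next, for each of the canonical representatives $A_{1,2},\dots,A_{12,2}$ from the characteristic $2$ table of Section $2$, I would substitute its structure constants into the relevant system. As in the preceding proof these substitutions simplify each system drastically, reducing it to a handful of equations that can be settled by combining hand computation with computer algebra; one records which parameter values, if any, make the system consistent, and those furnish the algebras listed in the conclusion. The opposite (``Right'') versions of the identities need no separate treatment: Theorem \ref{Th43} already describes how each $A_{i,2}$ behaves under the opposite operation, so their classification follows from the ``Left'' classification by that dictionary. Two shortcuts survive verbatim: commutativity (hence, in characteristic $2$, also anticommutativity) just says that the second and third columns of $A$ coincide, and the algebra $A_{12,2}$ satisfies every $I_k$ since it is commutative and all its products of three factors vanish. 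A final bookkeeping pass identifies the coincidences among the resulting lists that occur only in characteristic $2$ (again flagged by $\cong$).

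The steps requiring the most care are the genuinely nonlinear ones — the well-defined cube identity $I_5$, the Jordan-type identities $I_{19}-I_{22}$, and the Malcev identities $I_{23}-I_{24}$ — whose systems have degree three or four. Here characteristic $2$ really does change the analysis: a pivot that was a unit times a structure constant in odd characteristic (a coefficient such as $2\alpha_j$) may now vanish identically and so yield no information, whereas a factor of $3$ becomes a unit; one must therefore redo each case split, for $I_{23}-I_{24}$ for instance branching on whether the relevant leading coefficient (the analogue of $\alpha_4$ in the $I_{23}$ computation above) is zero. For $I_5$ and $I_{19}$ the odd-characteristic analyses in \cite{B4} and \cite{B3} can be rerun with $2=0$; one should also keep in mind that in characteristic $2$ the element $i$ with $i^2=-1$ equals $1$, so the conditions singling out certain algebras among the Jordan solutions (here, $A_{5,2}(\alpha_1)$ with $\alpha_1^2+\alpha_1+1=0$) take a different shape from their odd-characteristic analogues. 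I expect this bookkeeping around degenerate pivots, rather than any conceptual difficulty, to be the real obstacle.
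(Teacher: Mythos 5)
Your proposal follows exactly the route the paper takes: the paper gives no separate proof for the characteristic $2$ case, stating only that it ``can be easily proved following the same manner'' as the $\mathrm{Char}(\mathbb{F})\neq 2,3$ theorem, i.e.\ rewriting each $I_k$ as a matrix equation in the MSC, noting that the sign change collapses each ``anti'' identity onto its ordinary version, substituting the characteristic-$2$ canonical representatives, and solving the resulting polynomial systems. Your additional remarks on degenerate pivots and on $i=1$ in characteristic $2$ are sensible refinements of that same strategy, so the proposal is consistent with the paper's (unwritten) argument.
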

\begin{thm} In the case of $Char(\mathbb{F})= 3$ the following classification result of two-dimensional algebras over $\mathbb{F}$ with respect to the identities $I_1-I_{30}$ holds true.
	\begin{itemize}
	\item[$I_1$.] Commutativity identity $\mathbf{u}\mathbf{v}=\mathbf{v}\mathbf{u}$.\\
	$A_{2,3}(\alpha_1,\beta_1,1-\alpha_1)\cong A_{2,3}(\alpha_1,-\beta_1,1-\alpha_1),A_{3,3}(\beta_1,1),A_{4,3}(\alpha_1,,1-\alpha_1), A_{5,3}(\alpha_1),A_{9,3},A_{10,3},$ $A_{11,3},A_{12,3}$.
	
	\item[$I_2$.] Anti-commutativity identity $\mathbf{u}\mathbf{v}=-\mathbf{v}\mathbf{u}$.
	$A_{4,3}(0,-1)$.
	
	\item[$I_3$.] Associativity identity\\ $A_{2,3}\left(-1,0,-1\right),\
	 A_{4,3}\left(-1,-1\right),
	\ A_{4,3}(1,0),\ A_{4,3}\left(1,1\right),\ A_{4,3}\left(-1,0\right),\
	A_{12,3}$.	
	
	\item[$I_4$.] Anti-associativity identity $(\mathbf{u}\mathbf{v})\mathbf{w}=-\mathbf{u}(\mathbf{v}\mathbf{w}),$ %A(A\otimes I)=-A(I\otimes A).$\ \
$A_{12,3}$.
	
	\item[$I_5$.] Well defined cube identity\\
	 $A_{2,3}\left(\alpha _1,\beta _1,1-\alpha _1\right)\cong A_{2,3}\left(\alpha _1,-\beta _1,1-\alpha _1\right),
	\ A_{3,3}\left(\beta _1,1\right), A_{3,3}\left(0,-1\right),\ A_{4,3}\left(\alpha _1,1-\alpha _1\right),$\\ $A_{4,3}\left(\alpha _1,2\alpha _1-1\right),$ $A_{9,3},$ $A_{10,3},\ A_{11,3},\ A_{12,3}$.
	
	\item[$I_6$.] Half-commutativity identity $[\mathbf{u},\mathbf{v}]\mathbf{w}=\mathbf{w}[\mathbf{u},\mathbf{v}].$\\
	$A_{2,3}(\alpha_1,\beta_1,1-\alpha_1)\cong A_{2,3}(\alpha_1,-\beta_1,1-\alpha_1),A_{3,3}(\beta_1,1)$, $A_{4,3}(\alpha_1,1-\alpha_1), A_{9,3},A_{10,3},A_{11,3},A_{12,3}$.
	
	\item[$I_7$.] Anti-half-commutativity identity $[\mathbf{u},\mathbf{v}]\mathbf{w}=-\mathbf{w}[\mathbf{u},\mathbf{v}].$\\
	$A_{2,3}(\alpha_1,\beta_1,1-\alpha_1)\cong A_{2,3}(\alpha_1,-\beta_1,1-\alpha_1),A_{3,3}(\beta_1,1),A_{4,3}(\alpha_1,1-\alpha_1), A_{4,3}(\alpha_1,\alpha_1-1), A_{5,3}(0),\\ A_{8,3}(-1), A_{9,3},A_{10,3},A_{11,3},A_{12,3}$.
	
	\item[$I_8$.] Mixed associativity identity $[\mathbf{u},\mathbf{v}]\mathbf{w}=\mathbf{u}[\mathbf{v},\mathbf{w}].$\\
	$A_{2,3}(\alpha_1,\beta_1,1-\alpha_1)\cong A_{2,3}(\alpha_1,-\beta_1,1-\alpha_1), A_{3,3}(\beta_1,1), A_{4,3}(\alpha_1,1-\alpha_1), A_{9,3},A_{10,3},A_{11,3},A_{12,3}$.
	
	\item[$I_9$.] Anti-mixed-associativity identity $[\mathbf{u},\mathbf{v}]\mathbf{w}=-\mathbf{u}[\mathbf{v},\mathbf{w}].$\\
	$A_{2,3}(\alpha_1,\beta_1,1-\alpha_1)\cong A_{2,3}(\alpha_1,-\beta_1,1-\alpha_1), A_{3,3}(\beta_1,1), A_{4,3}(\alpha_1,1-\alpha_1), A_{9,3}, A_{10,3}, A_{11,3}, A_{12,3}$.
	
	\item[$I_{10}$.] Flexibility identity $\mathbf{u}(\mathbf{v}\mathbf{u})=(\mathbf{u}\mathbf{v})\mathbf{u}$.\\
	$A_{2,3}(\alpha_1,\beta_1,1-\alpha_1)\cong A_{2,3}(\alpha_1,-\beta_1,1-\alpha_1)$, $A_{3,3}(\beta_1,1),$  $A_{4,3}(\alpha_1,1-\alpha_1)$, $A_{4,3}(\alpha_1,2\alpha_1-1)$, $A_{9,3}$, $A_{10,3}$, $A_{11,3}$, $A_{12,3}$.
	
	\item[$I_{11}$.] Anti-flexibility identity $\mathbf{u}(\mathbf{v}\mathbf{u})=-(\mathbf{u}\mathbf{v})\mathbf{u}$.\ \
	$A_{12,3}.$
	
	\item[$I_{12}$.] Mixed flexibility identity $\mathbf{u}[\mathbf{v},\mathbf{u}]=[\mathbf{u},\mathbf{v}]\mathbf{u}$.\\
	$A_{2,3}(\alpha_1,\beta_1,1-\alpha_1)\cong A_{2,3}(\alpha_1,-\beta_1,1-\alpha_1),A_{3,3}(\beta_1,1),A_{4,3}(\alpha_1,1-\alpha_1), A_{4,3}(\alpha_1,\alpha_1-1), A_{5,3}(0),\\ A_{8,3}(-1), A_{9,3},A_{10,3},A_{11,3},A_{12,3}$.
	
	\item[$I_{13}$.] Mixed anti-flexibility identity $\mathbf{u}[\mathbf{v},\mathbf{u}]=-[\mathbf{u},\mathbf{v}]\mathbf{u}$.\\
	$A_{2,3}(\alpha_1,\beta_1,1-\alpha_1)\cong A_{2,3}(\alpha_1,-\beta_1,1-\alpha_1),A_{3,3}(\beta_1,1),A_{4,3}(\alpha_1,1-\alpha_1),  A_{9,3},A_{10,3},A_{11,3},A_{12,3}$.
	
	\item[$I_{14}$.] Left Leibniz identity $\mathbf{u}(\mathbf{v}\mathbf{w})=(\mathbf{u}\mathbf{v})\mathbf{w}+\mathbf{v}(\mathbf{u}\mathbf{w})$.\ \
	$A_{4,3}(0,-1), A_{8,3}(0), A_{12,3}$.
	
	\item[$I_{15}$.] Left anti-Leibniz identity $\mathbf{u}(\mathbf{v}\mathbf{w})=-(\mathbf{u}\mathbf{v})\mathbf{w}-\mathbf{v}(\mathbf{u}\mathbf{w})$.\\ $A_{2,3}(2,0,-1),A_{4,3}(1,0),A_{4,3}(1,1), A_{4,3}(-1,-1), A_{12,3}$.
	
	\item[$I_{16}$.] Mixed left Leibniz identity $\mathbf{u}[\mathbf{v},\mathbf{w}]=[\mathbf{u},\mathbf{v}]\mathbf{w}+\mathbf{v}[\mathbf{u},\mathbf{w}]$.\\
$A_{2,3}(\alpha_1,\beta_1,1-\alpha_1)\cong A_{2,3}(\alpha_1,-\beta_1,1-\alpha_1),A_{3,3}(\beta_1,1),A_{4,3}(\alpha_1,1-\alpha_1), A_{4,3}(\alpha_1,\alpha_1-1), A_{5,3}(0),\\ A_{8,3}(-1), A_{9,3},A_{10,3},A_{11,3},A_{12,3}$.

	\item[$I_{17}$.] Mixed anti-left Leibniz identity $\mathbf{u}[\mathbf{v},\mathbf{w}]=-[\mathbf{u},\mathbf{v}]\mathbf{w}-\mathbf{v}[\mathbf{u},\mathbf{w}]$.\\
	$A_{2,3}(\alpha_1,\beta_1,1-\alpha_1)\cong A_{2,3}(\alpha_1,-\beta_1,1-\alpha_1),A_{3,3}(\beta_1,1),A_{4,3}(\alpha_1,1-\alpha_1),  A_{9,3},A_{10,3},A_{11,3},A_{12,3}$.
	
	\item[$I_{18}$.] Left Poisson identity $(\mathbf{u}\mathbf{v})\mathbf{w}+(\mathbf{v}\mathbf{w})\mathbf{u}+(\mathbf{w}\mathbf{u})\mathbf{v}=0$.\\
	$A_{2,3}(0,0,-1), A_{2,3}(2,0,-1),A_{4,3}(\alpha_1,-(1-\alpha_1)^2), A_{5,3}(0), A_{8,3}(0),A_{12,3}$.

	\item[$I_{19}$.] Left Jordan identity $(\mathbf{u}\mathbf{v})\mathbf{u}^2=\mathbf{u}(\mathbf{v}\mathbf{u}^2)$.\\
	$A_{2,3}(-1,0,1),\ A_{2,3}(-1,0,-1),\ A_{4,3}(\alpha_1,-1-\alpha_1),\mbox{where}\ \alpha _1\neq -1\pm i,\
	A_{4,3}\left(\alpha_1,\sqrt{\alpha
		_1-\alpha _1^2}\right),$ $A_{4,3}\left(\alpha_1,-\sqrt{\alpha _1-\alpha _1^2}\right),\mbox{ where}\ \alpha _1\neq 0,1,$ $A_{5,3}( - 1+i),$ $A_{5,3}(- 1-i),$ $A_{8,3}(-1+i),$ $A_{8,3}(-1-i),$ $A_{10,3},$ $A_{12,3}.$
	
	\item[$I_{20}$.] Left anti-Jordan identity $(\mathbf{u}\mathbf{v})\mathbf{u}^2=-\mathbf{u}(\mathbf{v}\mathbf{u}^2)$.\ \
	$A_{4,3}(0,-1), A_{4,3}(0,0), A_{12,3}$.
		
	\item[$I_{21}$.] Mixed left Jordan identity $[\mathbf{u},\mathbf{v}]\mathbf{u}^2=\mathbf{u}[\mathbf{v},\mathbf{u}^2].$\\
	$A_{2,3}(\alpha_1,\beta_1,1-\alpha_1)\cong A_{2,3}(\alpha_1,-\beta_1,1-\alpha_1), A_{3,3}(\beta_1,1), A_{4,3}(0,-1), A_{4,3}(0,0)$, $A_{4,3}(\alpha_1,1-\alpha_1)$, $A_{9,3}$, $A_{10,3},$ $A_{11,3},$ $A_{12,3}$.
	
	\item[$I_{22}$.] Mixed anti-left Jordan identity $[\mathbf{u},\mathbf{v}]\mathbf{u}^2=-\mathbf{u}[\mathbf{v},\mathbf{u}^2].$\\
		$A_{2,3}(\alpha_1,\beta_1,1-\alpha_1)\cong A_{2,3}(\alpha_1,-\beta_1,1-\alpha_1), A_{3,3}(\beta_1,1), A_{4,3}(0,-1), A_{4,3}(0,0)$,  $A_{4,3}(\alpha_1,1-\alpha_1)$, $A_{9,3}$, $A_{10,3},$ $A_{11,3}$, $A_{12,3}$.
	
	\item[$I_{23}$.] Left Malcev identity $((\mathbf{u}\mathbf{v})\mathbf{w}+(\mathbf{v}\mathbf{w})\mathbf{u}+(\mathbf{w}\mathbf{u})\mathbf{v})\mathbf{u}=(\mathbf{u}\mathbf{v})(\mathbf{u}\mathbf{w})+(\mathbf{v}(\mathbf{u}\mathbf{w}))\mathbf{u} +((\mathbf{u}\mathbf{w})\mathbf{u})\mathbf{v}$.\\
	$A_{2,3}(-1,0,-1), A_{2,3}(0,0,-1),A_{4,3}(\alpha_1,-(1-\alpha_1)^2), A_{5,3}(0), A_{8,3}(0),A_{12,3}$.
	
	\item[$I_{24}$.] Left anti-Malcev identity $((\mathbf{u}\mathbf{v})\mathbf{w}+(\mathbf{v}\mathbf{w})\mathbf{u}+(\mathbf{w}\mathbf{u})\mathbf{v})\mathbf{u}=-(\mathbf{u}\mathbf{v})(\mathbf{u}\mathbf{w})-(\mathbf{v}(\mathbf{u}\mathbf{w}))\mathbf{u} -((\mathbf{u}\mathbf{w})\mathbf{u})\mathbf{v}$.\\ $A_{2,3}(-1,0,-1), A_{2,3}(0,0,-1),A_{4,3}(\alpha_1,-(1-\alpha_1)^2), A_{5,3}(0), A_{8,3}(0),A_{12,3}$.
		
	\item[$I_{25}$.] Left Zinbiel identity $ (\mathbf{u}\mathbf{v})\mathbf{w}=\mathbf{u}(\mathbf{v}\mathbf{w}+\mathbf{w}\mathbf{v})$.\ \
	$ A_{12,3}$.
	
	\item[$I_{26}$.] Left anti-Zinbiel identity $ (\mathbf{u}\mathbf{v})\mathbf{w}=-\mathbf{u}(\mathbf{v}\mathbf{w}+\mathbf{w}\mathbf{v})$.\\
	$A_{2,3}(-1,0,-1), A_{4,3}(-1,0),A_{4,3}(-1,-1), A_{4,3}(1,0),A_{12,3}$.

	\item[$I_{27}$.] Left symmetric identity $[\mathbf{u},\mathbf{v},\mathbf{w}]=[\mathbf{v},\mathbf{u},\mathbf{w}]$.\\
	$A_{2,3}(-1,0,-1), A_{2.3}(1,0,-1),  A_{4,3}(1,\beta_2), A_{4,3}(-1,\beta_2),A_{5,3}(-1),A_{5,3}(1), A_{8,3}(0), A_{12,3}$.
	
	\item[$I_{28}$.] Left anti-symmetric identity $[\mathbf{u},\mathbf{v},\mathbf{w}]=-[\mathbf{v},\mathbf{u},\mathbf{w}]$.\\    $A_{2,3}(-1,0,-1), A_{4,3}(-1,-1),  A_{4,3}(-1,0), A_{4,3}(1,0),A_{4,3}(1,1), A_{12,3}$.
	
	\item[$I_{29}$.] Centro-symmetric identity $[\mathbf{u},\mathbf{v},\mathbf{w}]=[\mathbf{w},\mathbf{v},\mathbf{u}]$. \\  $A_{1,3}(\alpha_1,\alpha_2, -\alpha_1-2\alpha_2,2\alpha_1+\alpha_2),$ where $\alpha_2=  \left( \pm \sqrt{2 \alpha_1}- \alpha_1+1\right)$,\\ $A_{2,3}(-1,0,-1)$, $A_{4,3}(\alpha_1,-\alpha_1+\sqrt{-\alpha_1^2-1})$,$A_{4,3}(\alpha_1,-\alpha_1-\sqrt{-\alpha_1^2-1})$,  $A_{5,3}(-1)$, $A_5(1)$, $A_{8,3}(i)$, $A_{8,3}(-i)$, $A_{12,3}$.
	
	\item[$I_{30}$.] Centro-anti-symmetric identity $[\mathbf{u},\mathbf{v},\mathbf{w}]=-[\mathbf{w},\mathbf{v},\mathbf{u}]$.\\
	$A_{2,3}(\alpha_1,\beta_1,1-\alpha_1)\cong A_{2,3}(\alpha_1,-\beta_1,1-\alpha_1)$, $A_{3,3}(\beta_1,1)$, $A_{4,3}(\alpha_1,1-\alpha_1)$, $A_{4,3}(\alpha_1,2\alpha_1-1)$, $A_{9,3}$, $A_{10,3}$, $A_{11,3}$, $A_{12,3}$.
\end{itemize}
\end{thm}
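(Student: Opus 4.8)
The plan is to argue exactly as in the proof of the classification theorem for $\mathrm{Char}(\mathbb F)\neq 2,3$ above. For each of the thirty identities $I_1,\dots,I_{30}$ one first rewrites $I_k$ as a tensor equation in the matrix of structure constants $A$; this translation is purely formal and does not depend on the characteristic, so it is literally the one already recorded for each $I_k$ in the preceding proof (for instance $I_3$ becomes $A(A\otimes I)=A(I\otimes A)$, $I_{18}$ becomes $A(A\otimes I)(u\otimes v\otimes w+v\otimes w\otimes u+w\otimes u\otimes v)=0$, and so on). Expanding such an equation against the standard basis of the relevant tensor power of $\mathbb F^{2}$ yields a system of polynomial equations in the eight structure constants $\alpha_1,\dots,\alpha_4,\beta_1,\dots,\beta_4$, namely the very systems displayed there, only now read over a field of characteristic $3$. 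Since satisfying a polynomial identity is invariant under isomorphism, it then suffices to substitute the twelve canonical representatives $A_{1,3}(\mathbf c),\dots,A_{12,3}$ of Section~2 into each system and to determine, family by family, the parameter values for which the resulting (much simplified) system is consistent; this is carried out by the same combination of hand computation and a computer algebra check as before.

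The only genuinely new input is arithmetical, and it enters at two points. First, in the general systems every monomial whose integer coefficient is divisible by $3$ now vanishes identically, so equations such as $3\alpha_1^{2}=0$ or $3\beta_4^{2}+3\alpha_4\beta_2=0$ become vacuous; this removes some of the obstructions that eliminated algebras in characteristic $\neq 2,3$ and so enlarges several solution sets, which is why entire families (for instance $A_{4,3}(\alpha_1,1-\alpha_1)$, $A_{5,3}(\alpha_1)$, or $A_{9,3}$) can survive identities under which, in characteristic $\neq 2,3$, only isolated members such as $A_5(2/3)$ or $A_9$ survived. Second, the canonical forms here are the characteristic-$3$ ones, so wherever $2\alpha_1-1$, $2\alpha_1$, etc.\ appeared in the structure constants one now reads $-\alpha_1-1$, $-\alpha_1$, and in particular $A_{9,3}$, $A_{11,3}$ and others are genuinely different algebras from their namesakes in the earlier list; each reduced system must therefore be solved afresh. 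Solutions are put into the stated form using the isomorphisms $A_{2,3}(\alpha_1,\beta_1,\beta_2)\cong A_{2,3}(\alpha_1,-\beta_1,\beta_2)$ and $A_{6,3}(\alpha_1,\beta_1)\cong A_{6,3}(\alpha_1,-\beta_1)$ from Section~2, and wherever $I_{19}$ and $I_{29}$ require a square root of $-1$ or of an expression such as $\alpha_1-\alpha_1^{2}$ one uses that, under the running hypothesis that the classification of \cite{B2} is valid over $\mathbb F$, these roots lie in $\mathbb F$, so the element $i$ with $i^{2}=-1$ is available and the exceptional parameter values $\alpha_1=-1\pm i$ are recorded as in the statement.

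Throughout, Theorem~\ref{Th45} serves both as a labour-saving device and as a consistency check: since $\mathbb A$ satisfies $I_k$ precisely when $\mathbb A^{op}$ satisfies the opposite identity, the list obtained for a self-opposite $I_k$ (commutativity, half-commutativity, flexibility, the mixed-flexibility, symmetric, centro-symmetric and Jordan-type identities) must be closed under $\mathbb A\mapsto\mathbb A^{op}$, and for the remaining $I_k$ one simultaneously reads off the classification for the corresponding ``Right'' identity. Note also that, unlike in characteristic $2$, no two of the $I_k$ collapse onto one another here, because $1\neq -1$ in characteristic $3$, so each identity must be treated on its own. The main obstacle is not conceptual but one of scale and vigilance: there are some $30\times 12$ reduced systems, and one must keep careful track of which monomials the characteristic kills so that no surviving parameter family is missed and no spurious one is introduced; the heaviest individual cases, as in the earlier theorem, are the degree-four identities $I_{19}$ (Jordan) and $I_{23}$ (Malcev), where even after specialization the systems remain sizable.
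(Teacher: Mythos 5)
Your proposal takes essentially the same route the paper does: the paper explicitly states that the characteristic-$3$ theorem is "easily proved following the same manner" as the $\mathrm{Char}(\mathbb{F})\neq 2,3$ case and omits the details, and your plan --- translating each $I_k$ into the same tensor equation on the MSC, specializing the resulting polynomial systems to the characteristic-$3$ canonical representatives, and solving case by case with the opposite-algebra theorem as a cross-check --- is exactly that method, with sensible extra attention to the coefficients killed by the characteristic. No gap; if anything your outline is more explicit than what the paper records.
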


\end{document}